\documentclass[11pt]{amsart}
\usepackage{color, amssymb, amsmath,amsfonts,mathrsfs}
\usepackage[shortlabels]{enumitem}

\usepackage{ mathrsfs, amsmath, amsthm, amssymb,
thm-restate, pifont}
\usepackage{textcomp}

\usepackage[colorlinks]{hyperref}
\hypersetup{linkcolor=blue, urlcolor=blue, citecolor=red}

\usepackage[capitalise]{cleveref}

\usepackage[colorlinks]{hyperref}
\hypersetup{linkcolor=blue, urlcolor=blue, citecolor=red}
\usepackage[capitalise]{cleveref}

\newcommand{\cmark}{\ding{51}}%
\newcommand{\xmark}{\ding{55}}%
\newtheorem{theorem}{Theorem}[section]
\newtheorem{proposition}[theorem]{Proposition}
\newtheorem{corollary}[theorem]{Corollary}
\newtheorem{lemma}[theorem]{Lemma}
\newtheorem{question}[theorem]{Question}

\newtheorem{remark}[theorem]{Remark}

\theoremstyle{definition}
\newtheorem{definition}[theorem]{Definition}

\newcommand{\N}{\mathbb N}
\newcommand{\Xendd}{\operatorname{End}(X, \leq)}
\newcommand{\w}{\omega}
\newcommand{\Tau}{\mathcal T}
\newcommand{\E}{\operatorname{End}}
\newcommand{\Zendd}{\operatorname{End}(\mathbb Z, \leq)}

\newcommand{\End}{\operatorname{End}(\mathbb N, <)}
\newcommand{\Endd}{\operatorname{End}(\mathbb N, \leq)}
\newcommand{\Enddi}{\operatorname{End}^{\infty}(\mathbb N, \leq)}
\newcommand{\Inj}{\operatorname{Inj}}
\newcommand{\Z}{\mathbb Z}
\newcommand{\im}{\operatorname{im}}
\newcommand{\dom}{\operatorname{dom}}
\newcommand{\id}{\operatorname{id}}
  
\newcommand{\Surj}{\operatorname{Surj}}  
  
\newcommand{\Sym}{\operatorname{Sym}}  
\newcommand{\set}[2]{\left\{#1:#2\right\}}
\newcommand{\makeset}[2]{\left\lbrace #1 \; \colon\;
 \begin{tabular}{@{}l@{}}
   #2
  \end{tabular}
  \right\rbrace}
\title{Polish topologies on endomorphism monoids of linear orders}

\author{S. Bardyla, L. Elliott}

\address{S.~Bardyla: University of Vienna, Institute of Mathematics, Vienna, Austria.}
\email{sbardyla@gmail.com}
\address{L. Elliott: University of Manchester, Department of Mathematics and Statistics,
Oxford Road, Manchester, M13 9PL, UK}
\email{luna.elliott142857@gmail.com}
\makeatletter
\@namedef{subjclassname@2020}{\textup{2020} Mathematics Subject Classification}
\makeatother

\subjclass[2020]{20M20, 46H40, 54H15, 54E52}
\keywords{Endomorphism monoids of partial orders, Polish semigroups, Zariski topology, maximal topologies, minimal topologies}

\thanks{The first named author was supported by the Austrian Science Fund FWF (Grant ESP 399).}
\thanks{The second named author was supported by the Heilbronn Institute for Mathematical
Research.}

\begin{document}

\begin{abstract}
In this paper, we investigate Polish semigroup topologies on the endomorphism monoids   $\operatorname{End}(\mathbb{N},\leq)$ and $\operatorname{End}(\mathbb{Z},\leq)$. We introduce a new structural condition, property $\mathbb{XX}$, which yields automatic continuity of Borel measurable homomorphisms between certain topological semigroups. This provides a new method for analyzing Polish semigroup topologies on monoids with small groups of units. We show that for all monoids considered, the semigroup Zariski topology coincides with the pointwise topology and is therefore the coarsest Hausdorff semigroup topology. We prove that the submonoid $\operatorname{End}^{\infty}(\mathbb{N},\leq)$ of $\operatorname{End}(\mathbb{N},\leq)$ consisting of all endomorphisms with infinite image admits a unique Polish semigroup topology, namely the pointwise topology. On the other hand, despite possessing a finest Polish semigroup topology, 
the monoids $\operatorname{End}(\mathbb{N},\leq)$ and $\operatorname{End}(\mathbb{Z},\leq)$, admit infinitely many distinct Polish semigroup topologies. Also, we show that the monoid $\operatorname{End}(\mathbb{N},<)$ admits exactly $2^{\aleph_0}$ Polish semigroup topologies and no maximal second-countable semigroup topology.  
\end{abstract}
\maketitle

\tableofcontents

\section{Introduction and main results}
In this paper, we compose functions from left to right, i.e., $(x)fg=((x)f)g$, and we let $\N$ denote the set of all non-negative integers.
Recall that the {\em full transformation monoid} $X ^ X$ on a set $X$ consists of all functions from $X$ to $X$ equipped with the operation of composition of functions. 
Let $S$ be a subsemigroup of the full transformation monoid $X^X$. Then the {\em pointwise topology on $S$} is generated by the subbasis consisting of the sets $$U_{x,y}:=\{f\in S: (x)f=y\},$$ where $x,y\in X$. 

It follows from~\cite{BGP, Gaughan} that the pointwise topology is the unique Polish group topology on the symmetric group $\Sym(\N)$, i.e., it is the unique topology which turns $\Sym(\N)$ into a separable completely metrizable topological group. Later Kechris and Rosendal~\cite{KR} generalized this result and showed that the pointwise topology is the unique separable Hausdorff group topology on $\Sym(\N)$. Similar results were discovered for other natural semigroups. For instance, it was shown in~\cite{main} that the pointwise topology is the unique Polish semigroup topology on the full transformation monoid $\N^\N$, i.e., it is the unique topology which turns $\N^\N$ into a Polish topological semigroup. Also, the monoids of partial transformations of $\N$, and continuous self-maps of the Hilbert cube $[0,1]^\N$ or the Cantor space $2^\N$ possess a unique Polish semigroup topology~\cite{main}. 

Let $\rho$ be a binary relation on a set $X$. Recall that the {\em endomorphism monoid} $\operatorname{End}(X,\rho)$ is the submonoid of $X^X$ such that for each $f\in \operatorname{End}(X,\rho)$ and $(a,b)\in\rho$ we have $((a)f,(b)f)\in\rho$. The algebraic and topological structures of endomorphism monoids are widely studied in the literature, see~\cite{Blyth, Bodirsky, BEKP, Coleman, EJMPP, Gunturkun, MP, Pinskernew, PiSc2, PiSc}. It was shown in~\cite{EJMPP} that the pointwise topology is the unique Polish semigroup topology on endomorphism monoids of the random graph, random digraph, and random partial order. 
In~\cite{PiSc} it was shown that the pointwise topology is the unique Polish semigroup topology on $\operatorname{End}(\mathbb Q,\leq)$. Also, in~\cite{Pinskernew} it was shown that the pointwise topology is a minimal Hausdorff semigroup topology on the elementary embedding monoids of various structures.

In this paper, we continue this theme by investigating Polish semigroup topologies on the endomorphism monoids $\End$, $\Endd$, and $\Zendd$. 
It is noteworthy that the aforementioned results on the uniqueness of Polish semigroup topologies on endomorphism monoids were proven using different versions of the so-called property $\mathbb X$ (introduced in~\cite{main}) applied to the group of units of the corresponding monoids. 
In particular, it was crucial that the corresponding relational structures have large automorphism groups.
Since $\End$ and $\Endd$ have trivial groups of units and the group of units of $\Zendd$ is isomorphic to the additive group of integers, the techniques which depend on a large group of units are not applicable to these monoids. 
To remedy this, we introduce a new property defined below.

%Moreover, in the case of \(\N^\N\), it was shown in \cite{} that \(\Sym(\N)\) has automatic continuity with respect to the class of second-countable groups and property \(\mathbb{X}\) enabled the analogous result to be proven for \(\N^\N\) as well given the result for \(\Sym(\N)\).

\begin{definition}\label{XX}
    We say that a semigroup \(S\) equipped with a topology has property $\mathbb{XX}$ with respect to a subset \(A\subseteq S\) if 
    \begin{enumerate}
        \item[(1)]  \(A\) is dense \(G_\delta\) in $S$;
        \item[(2)] for all \((a, s)\in A\times S\) there exist $b_0,\ldots,b_n\in S^1$ (here $S^1=S$, if $S$ is a monoid, and $S^1$ is equal to $S$ with an adjoined unit, otherwise) such that the map $t_{a,s}:S\rightarrow S$ defined by $(x)t_{a,s}=b_0xb_1\cdots xb_n$ has the following properties:
         \begin{enumerate}
    \item[(2.1)] $t_{a,s}$ is continuous;
    \item[(2.2)] $t_{a,s}{\restriction}_A: A\rightarrow S$ is an open map (here $A$ has the subspace topology inherited from $S$); 
    \item[(2.3)] \((a)t_{a, s}=s\).
   % \item for all \((a, s)\in A\times S\) there is a continuous semigroup polynomial \(t_{a, s}\) on \(S\) which is an open mapping from \(A\) to $S$ such that \((a)t_{a, s}=s\), where $A$ is equipped with the subspace topology.
    \end{enumerate}
    \end{enumerate}   
\end{definition}
%Note that trivially every  group has property $\mathbb{XX}$ with respect to itself. 
The property $\mathbb{XX}$ allows us to prove the automatic continuity of Borel measurable homomorphisms between certain topological semigroups. In particular, the following technical result (which appears later as \cref{borelcontinuous-XX}) is crucial for this paper.
% This is a useful tool for studying uniqueness of topologies on semigroups. 

\begin{theorem}
    Let \(S\) and \(T\) be topological semigroups such that
    \begin{itemize}
        \item \(S\) is Polish;
        \item \(T\) is regular and second-countable;
        \item \(S\) has property $\mathbb{XX}$ with respect to some \(A\subseteq S\).
   \end{itemize}
    In this case every Borel measurable homomorphism from \(S\) to \(T\) is continuous. 
\end{theorem}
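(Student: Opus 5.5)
We would follow the standard Pettis-type strategy for automatic continuity, using property $\mathbb{XX}$ in place of a group of units. Let $\phi\colon S\to T$ be a Borel measurable homomorphism. Since $T$ is second-countable, it suffices to show that for each $s\in S$ and each open $W\ni (s)\phi$ there is an open $U\ni s$ with $(U)\phi\subseteq W$. By regularity of $T$, we first choose an open $V$ with $(s)\phi\in V\subseteq \overline V\subseteq W$. This separation step is what will let us pass from ``$\phi$ is continuous on a comeagre set'' to genuine continuity.

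\textbf{Step 1 (continuity on a comeagre set).} Because $\phi$ is Borel and $T$ is second-countable, $\phi$ has the Baire property. By the classical theorem for Baire-measurable maps into second-countable spaces, there is a dense $G_\delta$ set $C\subseteq S$ such that $\phi{\restriction}_C$ is continuous. By condition (1) of \cref{XX}, $A$ is dense $G_\delta$ in the Polish space $S$, so $A$ is itself Polish. Since $C\cap A$ is comeagre in $S$, it is comeagre in $A$. Pick $a\in C\cap A$ (nonempty by Baire category).

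\textbf{Step 2 (transport via $t_{a,s}$).} Take $t=t_{a,s}$, $(x)t=b_0xb_1\cdots xb_n$, with $(a)t=s$. Because $\phi$ is a homomorphism, $(x)t\phi=(x)\tau$, where $\tau\colon T\to T$ is given by
\[
(y)\tau=(b_0)\phi\, y\,(b_1)\phi\cdots y\,(b_n)\phi .
\]
Here, if $S$ is not a monoid, $\phi$ is extended to $S^1\to T^1$ in the obvious way. The map $\tau$ is continuous because $T$ is a topological semigroup, and $(a)\phi\tau=(s)\phi\in V$. By continuity of $\phi{\restriction}_C$ at $a$ and of $\tau$, there is an open $O\ni a$ in $S$ with $((O\cap C)\phi)\tau\subseteq V$, that is, $(O\cap C)t\phi\subseteq V$.

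By (2.2), $U:=(O\cap A)t$ is open in $S$, and it contains $s$. The difficulty is that we only control $\phi$ on $(O\cap A\cap C)t$, not on all of $U$.

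\textbf{Step 3 (main obstacle: from comeagre control to all of $U$).} This step will be handled by a second application of the same trick with the roles swapped. Fix $u\in U$. For each point $u'$, property $\mathbb{XX}$ applied to $(a',u')$ with $a'\in A\cap C$ gives similar maps. We use instead the following density argument. The set $D=(O\cap A\cap C)t$ is dense in $U$: if $N\subseteq U$ is nonempty open, then $t^{-1}(N)\cap O\cap A$ is a nonempty open subset of $A$ by (2.1), so it meets the comeagre set $C\cap A$. Hence $(D)\phi\subseteq V$.

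It remains to see that $(u)\phi\in \overline V\subseteq W$. For this, apply Steps 1--2 at the point $u$ itself, obtaining a dense set $D_u$ in an open neighbourhood $U_u$ of $u$ on which $\phi$ takes values in an arbitrary prescribed neighbourhood $V'$ of $(u)\phi$. If $(u)\phi\notin\overline V$, choose $V'$ disjoint from $V$ using regularity. We now need $D\cap D_u\neq\emptyset$ inside $U\cap U_u$. To secure this, we arrange that both $D$ and $D_u$ are comeagre in their open sets, not merely dense; this is the delicate point. Our plan is to use openness of $t{\restriction}_A$ to show that images of comeagre subsets of $A$ under $t$ are nonmeagre in every open subset of $U$. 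An open continuous map between Polish spaces sends comeagre sets to sets that are locally nonmeagre, and we would show they are comeagre via an analytic-set/Baire-property argument. Two comeagre-in-$U\cap U_u$ sets then intersect, which gives a contradiction. Hence $(U)\phi\subseteq\overline V\subseteq W$, and $\phi$ is continuous at $s$.
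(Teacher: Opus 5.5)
Your argument is correct, and it takes a genuinely different route from the paper's.

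The paper never works with $\phi$ directly. It pulls the topology of $T$ back to a regular second-countable semigroup topology $\Tau_2$ on $S$ and derives the result from the topology-comparison statement \cref{cor:XX_sub}. That statement is proved in two stages:
\begin{enumerate}
\item continuity of $\id\colon(S,\Tau_1)\to(S,\Tau_2)$ at the points of $A$. This is \cref{lem:bigXX}, which combines \cref{comeager}(1), the $\Tau_2$-density of comeagre subsets of $A$, \cref{lem:dense_cont_points} (the place where regularity is used), and transport along $t_{p,q}$ with $p,q\in A$;
\item extension from $A$ to all of $S$ via property $\mathbb W$ and the cited \cref{propertyW}.
\end{enumerate}

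You instead give a direct Pettis-type argument. Continuity of $\phi$ on a dense $G_\delta$ set, the identity $(x)t\phi=(x)\phi\tau$, and openness of $t_{a,s}{\restriction}_A$ together produce, for every $s\in S$ at once, an open $U\ni s$ in which $(V)\phi^{-1}\cap U$ is comeagre. Regularity then enters only in the final disjoint-comeagre-sets contradiction. Because you use $t_{a,s}$ for arbitrary $s\in S$ rather than only $s\in A$, you bypass property $\mathbb W$, \cref{comeager} and \cref{lem:dense_cont_points} entirely.

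What the paper's detour buys is the more general comparison result \cref{cor:XX_sub}, together with \cref{lem:bigXX}. The latter needs only the maps $t_{a,b}$ with $a,b\in A$ and their $\Tau_2$-continuity, not a homomorphism. This is the framework the paper reuses for maximality of Polish semigroup topologies.

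The step you leave as a plan does go through, and you do not need analytic sets. Let $N\subseteq U$ be nonempty open and suppose $N\cap D$ were meagre. Then \cref{meager preimage}, applied to $t{\restriction}_{O\cap A}$, makes $(N)t^{-1}\cap O\cap A\cap C$ meagre in $A$. But this set is comeagre in the nonempty open subset $(N)t^{-1}\cap O\cap A$ of the Polish space $A$, a contradiction.

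Hence the Borel set $(V)\phi^{-1}\supseteq D$ is nonmeagre in every nonempty open subset of $U$. Since it has the Baire property, it is comeagre in $U$. The same holds for $(V')\phi^{-1}$ on $U_u$, so the two sets meet in $U\cap U_u\ni u$, giving the contradiction you need.
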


For more on the automatic continuity of homomorphisms between topological groups and semigroups, see for instance~\cite{Banakh, main, EJMPP, Kuzne, pech2016automatic, Rosen1, Rosen2, Rosen3, Rosen4survey, Tsankov} and references therein.

In most of the aforementioned cases the unique Polish semigroup topology on a given semigroup coincides with the semigroup Zariski topology, which has purely algebraic nature and is defined below. 
 
\begin{definition}
The {\em semigroup Zariski} topology $\mathfrak{Z}(S)$
on a semigroup $S$ is generated by the subbasis consisting
of the sets $$\set{x\in S}{a_0xa_1\cdots xa_n \neq b_0xb_1\cdots xb_m},$$ where $n,m\in\N$ and $\{a_i: i\leq n\}\cup\{b_j: j\leq m\}\subseteq S$. 
\end{definition}

The following four theorems are the remaining main results of this paper. For the reader's convenience, in the statement of these theorems we also add references to the places, where the corresponding facts are proven.

\begin{theorem}\label{thm:endNl}
The following hold for topologies on the monoid \(\End\):
\begin{enumerate}[\rm(1)]
    \item The semigroup Zariski topology coincides with the pointwise topology and is thus the coarsest Hausdorff semigroup topology (\cref{lem:zariski_pointwise}).
    \item There are no maximal second-countable semigroup topologies (\cref{nosecondAC}).
    \item There are precisely \(2^{\aleph_0}\) Polish semigroup topologies (\cref{continuun many things}).
    \item There are at least \(2^{\aleph_0}\) second-countable metrizable non-Polish semigroup topologies (\cref{continuun many things}).
\end{enumerate}
\end{theorem}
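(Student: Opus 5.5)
Part (1) comes first, because the other three parts build on it. The pointwise topology makes $\End$ a Polish topological semigroup, since it is closed in $\N^\N$: strictly increasing maps form a closed set, and composition is continuous. To show that the Zariski topology coincides with the pointwise one, we prove two inclusions.

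The inclusion of the Zariski topology in the pointwise topology is standard: in any Hausdorff semigroup topology, the solution sets of polynomial equations are closed. For the reverse inclusion, we express each basic set $U_{x,y}$ (intersected with a suitable Zariski-open set) in terms of polynomial inequalities. The natural tools are the constant-like elements of $\End$. Take the shift $s\colon n\mapsto n+1$, and let $e_k$ be the map that is the identity below $k$ and shifts everything from $k$ on. Then $(x)f=y$ can be detected by equations of the form $e_x f\neq e_x f e_{y'}$ and similar ones. More precisely, $f$ fixes pointwise the initial segment $\{0,\dots,(x)f-1\}$ exactly when some composite stabilises, and strict monotonicity forces $(n)f\ge n$. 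So the set $\{f:(x)f\ge y\}$ is Zariski-closed or Zariski-open, and likewise for $\le$. Finding explicit such polynomials is routine but fiddly. It follows that the Zariski topology contains the pointwise topology, and because every Hausdorff semigroup topology contains the Zariski topology, the pointwise topology is the coarsest Hausdorff semigroup topology.

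For (3) and (4), we construct a family of topologies $\tau_X$ indexed by subsets $X$ of a countable set. For instance, we refine the pointwise topology by declaring clopen the sets $\{f : \N\setminus \im f \text{ has property } P\}$, where $P$ is a suitable condition such as "the cokernel gaps at positions in $X$ are nonempty". Alternatively, we declare clopen the sets of maps whose image misses a given finite pattern. Each refinement must be checked to be a semigroup topology. That it is Polish follows since it is generated by countably many additional clopen sets that are $G_\delta$ or closed in a Polish topology, and such refinements stay Polish. Taking $X$ so that the topologies are pairwise distinct gives $2^{\aleph_0}$ Polish semigroup topologies. The upper bound $2^{\aleph_0}$ holds because any Polish semigroup topology contains the Zariski (pointwise) topology. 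Then the identity map from the finer topology to the pointwise topology is continuous, so the two Borel structures coincide, and a second-countable topology with a fixed countable base from a Borel family gives at most $2^{\aleph_0}$ choices. For (4), we use non-$G_\delta$ refinements, for example declaring open a dense co-dense family of sets, which yields non-Baire metrizable semigroup topologies.

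For (2), given any second-countable semigroup topology $\tau$, we produce a strictly finer one. Since $\tau$ has a countable base, some new invariant remains unused: for example, a set of the form $\{f : (n)f - n \equiv 0 \bmod m\}$ or a condition on the gaps of $\im f$ at infinitely many places. Adding it keeps a countable subbase and continuity of multiplication. The main obstacle is here and in (3): we must verify that the added sets keep multiplication continuous, and we must guarantee strictness against an arbitrary $\tau$. For strictness, we would try a diagonal argument over the countable base of $\tau$, choosing a new clopen set not in $\tau$ among continuum many candidates $\tau_X$ whose joins with $\tau$ are semigroup topologies.
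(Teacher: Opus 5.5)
Your upper bound in (3) is correct and is the paper's argument. The other parts have genuine gaps.

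The most serious gap is in (2). Enlarging an arbitrary second-countable semigroup topology $\tau$ by an ``unused invariant'' chosen diagonally needs two things you do not supply. First, you need sets that can be adjoined to \emph{any} semigroup topology without destroying continuity of multiplication. The paper uses prime ideals $J$, where this is automatic: the map sending $s\in J$ to $0$ and $s\notin J$ to $1$ is a homomorphism onto $(\{0,1\},\cdot)$, which carries the semigroup topology $\{\varnothing,\{0\},\{0,1\}\}$ (\cref{sem}). Your example $\{f:(n)f-n\equiv 0 \pmod m\}$ is, for fixed $n$, already clopen in the pointwise topology, so it adds nothing to any Hausdorff semigroup topology. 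Second, you need a reason why not all candidates can lie in $\tau$, and counting cannot give one: a second-countable topology may have $2^{\aleph_0}$ clopen sets, as many as your candidates. The paper's reason is order-theoretic. It builds a strictly decreasing chain $(J_\alpha)_{\alpha<\mathfrak b}$ of prime ideals $J_\xi=\End\setminus{\downarrow}^*\{f_\xi^n:n\geq 1\}$, where $f_\xi$ eventually dominates every earlier $f_\alpha^n$. This is possible because $\mathfrak b>\aleph_0$ and every family of fewer than $\mathfrak b$ functions is $\leq^*$-bounded. If all $J_\alpha$ were in $\tau$, pick $f_\alpha\in J_\alpha\setminus J_{\alpha+1}$ and a basic $U_\alpha$ with $f_\alpha\in U_\alpha\subseteq J_\alpha$. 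For $\beta>\alpha$ we have $U_\beta\subseteq J_{\alpha+1}\not\supseteq U_\alpha$, so $\alpha\mapsto U_\alpha$ injects the uncountable $\mathfrak b$ into a countable base.

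The lower bounds in (3) and (4) fail for a related reason: none of your candidate sets is shown to give a semigroup topology, and the natural ones do not. Conditions on finitely many values, or ``the image misses a given finite set'', are already pointwise clopen, since $(n)f\geq n$ means $x\in\im f$ depends only on $f{\restriction}_{\{0,\dots,x\}}$. A set like $A_X=\{f:\im f\cap X=\varnothing\}$ is only a left ideal, and adjoining it breaks continuity. Take $X$ the even numbers and $(n)f=2n+1$, so $f1_\N=f\in A_X$. Every pointwise neighbourhood of $1_\N$ contains some $g\notin A_X$ sending a large odd number to an even one, so $fg\notin A_X$.

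The device you are missing is the Rees quotient of \cref{Pol2}. For an ideal $J$ with countable complement, adjoining $J$ and the singletons of $\End\setminus J$ gives a Polish semigroup topology. For $J=\End\setminus C_f$ with $C_f=\operatorname{AS}(\N,<)\cap{\downarrow}f$, the isolated points are exactly $C_f$, and $f\mapsto C_f$ is injective; this gives $2^{\aleph_0}$ distinct topologies. In (4), your ``dense co-dense'' set must again be a prime ideal $I$. Non-Polishness then comes from a Baire argument: $I\setminus C_f$ and $\End\setminus(I\cup C_f)$ would be complementary dense $G_\delta$ subsets of $\End\setminus C_f$.

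In (1), your sample inequality is vacuous. With left-to-right composition, $e_xf=e_xfe_{y'}$ would force $\im(e_xf)\subseteq\{0,\dots,y'-1\}$, which is impossible for an infinite image, so $e_xf\neq e_xfe_{y'}$ holds for every $f$. What works, via \cref{lem-zariski-pointwise}, is a pair $a,b\in\End$ differing only at $y$, so that $sa\neq sb$ iff $y\in\im s$. Add $c_0,c_1\in\End$ with $\im c_0\cap\im c_1=\{x\}$. Injectivity of $s$ then gives $\{s:c_0sa\neq c_0sb\}\cap\{s:c_1sa\neq c_1sb\}=U_{x,y}$.
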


\begin{theorem}\label{thm:endN}
The following hold for topologies on the monoid \(\Endd\):
\begin{enumerate}[\rm(1)]
    \item The semigroup Zariski topology coincides with the pointwise topology and is thus the coarsest Hausdorff semigroup topology (\cref{zariski_order}).
    \item There are infinitely many Polish semigroup topologies (\cref{infinite polish N}).
    \item There is a unique maximal Polish semigroup topology. Moreover this topology contains all Polish semigroup topologies on \(\Endd\)  (\cref{maxN}).
\end{enumerate}
\end{theorem}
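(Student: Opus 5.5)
The plan is to prove the three parts separately. Part (1) is algebraic; parts (2) and (3) hinge on a dense $G_\delta$ set $A$ for which the automatic continuity theorem above applies.

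\textbf{Part (1).} Since every Hausdorff semigroup topology contains the semigroup Zariski topology, it suffices to show $U_{x,y}\in\mathfrak{Z}(\Endd)$ for all $x,y$. It is cleaner to show that each complement $\{f:(x)f\neq y\}$ is Zariski open, because then $U_{x,y}=\bigcap_{z\neq y}\{f:(x)f\neq z\}$. To handle these sets, I will use constant maps $c_z$ and the maps $e_{[a,b]}$, meaning the order-preserving retraction onto $\{a,b\}$ (or onto intervals). The basic identity to aim for is $c_x f = c_{(x)f}$. Then $\{f: c_xf\neq c_y\}=\{f:(x)f\neq y\}$ is a subbasic Zariski-open set. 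This gives Zariski-openness of the complements of $U_{x,y}$, so the pointwise topology is finer than the Zariski topology. For the reverse inclusion, I will express ``$(x)f\geq y$'' and ``$(x)f\leq y$'' as equations, using polynomials such as $c_xfe$ where $e$ collapses $[0,y)$ onto $y-1$ and $[y,\infty)$ onto $y$. Then $U_{x,y}$ is a finite Boolean combination of sets of the form $\{f: c_xfe\neq c_{y-1}\}$, each of which is open. The pointwise topology is a Hausdorff semigroup topology, so the two topologies coincide.

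\textbf{Part (3), maximal topology.} I will take $A=\Enddi$ intersected with the injective maps with co-infinite image, or a similar generic set. I first build a Polish semigroup topology $\mathcal T_{\max}$ whose Borel sets are exactly the pointwise-Borel sets; for instance, I generate it by the pointwise topology together with the sets $\{f: \im f \text{ finite}, |\im f|=k\}$ and ``$\lim f = $ eventual behaviour'' data. I then verify that $(\Endd,\mathcal T_{\max})$ has property $\mathbb{XX}$ with respect to $A$. Given $a\in A$ and $s$, choose $b_0=1$ and $b_1$ with $ab_1=s$; this is possible since $a$ is injective with infinite image, so we may set $b_1$ on $\im a$ as $a^{-1}s$ and extend monotonically. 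Continuity of $x\mapsto xb_1$ is automatic. Openness on $A$ is the key check, and if right multiplication is not open I will use a two-sided polynomial $b_0xb_1$.

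Now let $\mathcal T$ be any Polish semigroup topology. By part (1), $\mathcal T$ contains the pointwise topology, and both are Polish, so the identity $(\Endd,\mathcal T)\to(\Endd,\text{pointwise})$ is a continuous bijection. By Lusin--Souslin, the two topologies therefore have the same Borel sets, and these coincide with the Borel sets of $\mathcal T_{\max}$. Hence the identity map $(\Endd,\mathcal T_{\max})\to(\Endd,\mathcal T)$ is a Borel homomorphism, and the automatic continuity theorem makes it continuous. So $\mathcal T\subseteq\mathcal T_{\max}$, which gives uniqueness and maximality.

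\textbf{Part (2).} I will produce topologies $\mathcal T_n$ for each $n$ by refining the pointwise topology with clopen sets $\{f: |\im f|\leq n\}$ or analogous ideals. These sets are ideals, so multiplication remains continuous. Each $\mathcal T_n$ is Polish, being obtained by declaring countably many Borel sets clopen in a way that preserves Polishness. The topologies are distinct because $\{f:|\im f|\leq n\}$ is open in $\mathcal T_n$ but not in $\mathcal T_m$ for $m<n$.

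The main obstacle is the openness condition (2.2) in part (3). The difficulty is to choose $\mathcal T_{\max}$ and $A$ compatibly, so that $A$ is dense $G_\delta$ in the finer topology while the polynomial $t_{a,s}$ restricted to $A$ remains open.
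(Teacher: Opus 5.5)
Your part (1) is sound, and it is more direct than the paper's route, which goes through a general lemma on $T_1$ shift-continuous topologies. Let $e,e'$ be the two-valued retractions at the threshold $y$. Then the sets $\{f:(x)f\ge y\}=\{f:c_xfe\neq c_{y-1}\}$ and $\{f:(x)f\le y\}=\{f:c_xfe'\neq c_{y+1}\}$ are Zariski open, and their intersection is $U_{x,y}$. The earlier infinite-intersection remark proves nothing and can be dropped. Parts (2) and (3), however, have genuine gaps.

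\textbf{Part (3).} The obstacle you flag is fatal: $(\Endd,\Tau_{\max})$ cannot have property $\mathbb{XX}$ with respect to any $A$.
\begin{enumerate}
\item Your candidate isolates every finite-image map. If $f$ attains all its $k$ values on $[0,N]$, then $\{g:|\im g|=k,\ g{\restriction}_{[0,N]}=f{\restriction}_{[0,N]}\}=\{f\}$. Any topology containing all Polish semigroup topologies must also isolate them, because isolating the countably many finite-image maps and making $\Enddi$ clopen already gives a Polish semigroup topology.
\item A dense $A$ contains every isolated point, so $c_0\in A$.
\item Take $a=c_0$ and any $s$ that is neither constant nor isolated; by second countability, all but countably many $s$ qualify. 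For $n\ge1$, $b_0c_0b_1\cdots c_0b_n$ is a constant map, so (2.3) fails. For $n=0$, $t_{a,s}$ is constant with image $\{s\}$, which is not open, so (2.2) fails.
\end{enumerate}
Your specific $A$ is not even dense, since $\{f:(0)f=(1)f\}$ contains no injection.

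The paper applies $\mathbb{XX}$ only to $\Enddi$, with the pointwise topology and $A=\Enddi$. It uses $(x)t_{a,s}=\phi x\psi$, where $\phi$ is injective, $\psi$ is surjective and $\phi a\psi=s$ (\cref{term}). Left multiplication by injections and right multiplication by surjections are open (\cref{open D1}). This shows that the pointwise topology is the unique Polish semigroup topology on $\Enddi$. Now let $\Tau$ be any Polish semigroup topology on $\Endd$. Since $\Tau$ contains the pointwise topology, $\Enddi$ is $G_\delta$ in $\Tau$, so $\Tau|_{\Enddi}$ is the pointwise topology. The countable complement of $\Enddi$ is discrete in $\Tau_{\max}$, hence $\Tau\subseteq\Tau_{\max}$. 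You would still need to show that $\Tau_{\max}$ is a semigroup topology; the paper does this by identifying it with $\Tau_0|_S$.

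\textbf{Part (2).} Declaring the ideal $I=\{f:|\im f|\le n\}$ clopen does not keep multiplication continuous. That trick works for prime ideals (\cref{sem}), but $\Endd\setminus I$ is not a subsemigroup. Take $n=1$:
\begin{itemize}
\item let $(0)f=0$ and $(x)f=1$ for $x\ge1$;
\item let $(0)g=(1)g=0$ and $(x)g=x-1$ for $x\ge2$.
\end{itemize}
Then $fg=c_0\in I$. Let $f_N$ agree with $f$ on $[0,N]$ and with the identity beyond. The maps $f_N$ converge to $f$ in your topology, yet $f_Ng\notin I$; so even $r_g$ is discontinuous.

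The repair is the paper's $\Tau_n$: additionally isolate the finite-image maps of large image size, make $\Enddi$ clopen, and verify continuity by the case analysis of \cref{*}. Distinctness then comes from which finite-image maps are isolated (\cref{distict polish Z}). It cannot come from openness of $I$, since once the larger finite-image maps are isolated, $I$ is open in many of these topologies at once.
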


\begin{definition}
  Let $\operatorname{End}^\infty(\N,\leq)$ denote the submonoid of $\operatorname{End}(\N,\leq)$ consisting of those maps with infinite image.   
\end{definition}

\begin{theorem}\label{thm:endNi}
The following hold for topologies on the monoid \(\Enddi\):
\begin{enumerate}[\rm(1)]
    \item The semigroup Zariski topology coincides with the pointwise topology and is thus the coarsest Hausdorff semigroup topology (\cref{magenta}).
    \item The pointwise topology is the unique Polish semigroup topology (\cref{maximum}).
\end{enumerate}
\end{theorem}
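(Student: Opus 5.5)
Part (1) needs two inclusions. Since $\mathfrak{Z}(S)$ is contained in every Hausdorff semigroup topology (solution sets of polynomial equations are closed when multiplication is continuous and points are closed), and the pointwise topology on $\Enddi$ is a Hausdorff semigroup topology, it suffices to show that each subbasic set $U_{x,y}$ is Zariski open. The idea is to encode "$(x)f=y$" by polynomial equations and then write its complement as a finite intersection of inequations, so that the set is open. Composition is left to right, so $c\,f$ means "first $c$, then $f$". For $c$ I take a constant-like map onto $x$; this lies in $\Enddi$ as long as its image is infinite. So I take $c_x$ with $(n)c_x=x$ for $n\le x$ and $(n)c_x=n$ for $n>x$. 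Then $c_xf$ agrees with $f$ on $[x,\infty)$ and equals $(x)f$ below $x$. The condition $(x)f=y$ should be detectable by comparing $c_x f$ with $c_xf d$, where $d$ fixes $y$ and collapses a neighbourhood of $y$. This needs care, so the more robust plan is as follows.

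I show that, for each $x$, the sets $\{f:(x)f\ge y\}$ and $\{f:(x)f\le y\}$ are both Zariski clopen. This suffices, because $U_{x,y}$ is their intersection. Let $g_y$ be the map $n\mapsto \max(n,y)$, which lies in $\Enddi$. Then $(x)f\ge y$ iff $(x)fg_y=(x)f$. Composing on the left with $c_x$ makes the $x$-coordinate visible on all of $[0,x]$, but the other coordinates still interfere. To isolate coordinate $x$, I use $p=c_x$ together with the polynomial equation $c_x f g_y \cdot h = c_x f\cdot h$, where $h$ is chosen to erase the information at coordinates $>x$. Since the maps must have infinite image, full erasure is impossible. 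The fix is to compare only through left multiplication by maps $e$ with $(n)e=x$ for all $n\le N$: the equality $e f g_y = e f$ restricted to the first $N$ points is what we want, while the tail is uncontrolled. So I instead use the family of maps $\{f:(x)f<y\}$ and show it is an intersection of complements of solution sets over finitely many choices. Concretely, I want finitely many maps $a_i,b_i$ with $(x)f<y \iff a_i f b_i\ne a_i' f b_i'$ for some $i$. This is the step I expect to be the main obstacle: right translations cannot kill the tail of $f$ because of the infinite-image constraint. The idea to try first is to use two-sided polynomials $a f b f c$, letting a second copy of $f$ absorb the tail behaviour.

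Part (2) uses the automatic continuity theorem stated above (\cref{borelcontinuous-XX}). Let $\tau$ be any Polish semigroup topology on $S=\Enddi$. By part (1), $\tau$ contains the pointwise topology $\tau_p$, so $\id:(S,\tau)\to(S,\tau_p)$ is a continuous bijection between Polish spaces. By Lusin–Souslin, its inverse $\id:(S,\tau_p)\to(S,\tau)$ is Borel. If $(S,\tau_p)$ has property $\mathbb{XX}$ with respect to some $A$, then since $(S,\tau)$ is regular and second countable, the theorem makes this inverse continuous, so $\tau=\tau_p$.

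It remains to verify $\mathbb{XX}$. For $A$ I take the injective maps in $\Enddi$ with coinfinite image, or possibly those whose image is cofinite-complement infinite with infinitely many gaps; this set is $G_\delta$ and dense in $\tau_p$. Given $a\in A$ and $s\in S$, I set $t(x)=x\,b$ with $b$ chosen so that $a b=s$. This works because $a$ is a strictly increasing injection: on $\operatorname{im} a$ set $((n)a)b=(n)s$, and fill the gaps monotonically. The map is continuous. For openness on $A$, I must show that a basic neighbourhood of $a$ (agreement on $[0,N]$) maps onto a neighbourhood of $s$. If that fails, I would use $t(x)=b_0xb_1$ instead, using the gaps of $a$ to allow arbitrary prescribed finite changes of $s$.
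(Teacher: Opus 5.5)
Both parts of your proposal have genuine gaps.

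\textbf{Part (1) is not proved.} You reduce to showing that each $U_{x,y}$ is Zariski open. The decisive step is then left as ``the main obstacle'', with only a suggestion to try polynomials $afbfc$. The half you struggled with is actually easy. Since $f$ is monotone, $\{f:(x)f\ge y\}=\{f:c_xfg_y=c_xf\}$ with no need to erase the tail, so every set $\{f:(x)f\le y\}$ is Zariski open. What is hard, and missing, is showing that the lower-bound sets $\{f:(x)f\ge y\}$ are Zariski open.

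The paper's idea is as follows. Let $\iota_i$ be the increasing injection onto $\N\setminus\{i\}$. Then the sets $\{f:\iota_if=\iota_{i+1}f\}=\{f:(i)f=(i+1)f\}$ and $\{f:f\iota_i=f\iota_{i+1}\}=\{f:i\notin\im(f)\}$ are Zariski closed. Take the complements of the first kind for $i<m-1$ and of the second kind for $i<m$. Their intersection is exactly the set of $f$ agreeing with $1_\N$ on $\{0,\dots,m-1\}$, since being strictly increasing there and hitting $0,\dots,m-1$ forces this. That handles the identity. One then transports to an arbitrary $h$ by shift-continuity in two steps. First pass to $h^{\im}$, using a surjection $g$ with $h^{\im}g=1_\N$ together with the open conditions that the first few points of $\im(h)$ lie in $\im(f)$. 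Then pass to $h$ using $f_{\min},f_{\max}$, which satisfy $f_{\min}h=f_{\max}h=h^{\im}$. Your part (2) uses part (1) to get $\tau_p\subseteq\tau$, so this gap propagates.

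\textbf{Part (2).} The reduction to property $\mathbb{XX}$ via \cref{borelcontinuous-XX} is the paper's route. You should add that $(\Enddi,\tau_p)$ is Polish because $\Endd\setminus\Enddi$ is countable. However, your $A$ violates condition (1) of \cref{XX}. The injective maps form a closed subset of $\Enddi$ with empty interior; for example, the nonempty open set $\{f:(0)f=(1)f\}$ contains no injection. So any set of injections is nowhere dense, not dense $G_\delta$.

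Also, $x\mapsto xb$ is typically not open at $a$. Take $(n)a=2n$ and $(n)s=3n$. Any $b$ with $ab=s$ takes a single value on each $2n+1$, so it misses infinitely many of the numbers $3n+1,3n+2$. But every pointwise neighbourhood of $s$ contains maps taking such values, so the image of a neighbourhood of $a$ is never a neighbourhood of $s$.

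The fix is to take $A=\Enddi$ itself and use two-sided translations. Given $a,s$, pick an injection $\phi$ with $(0)\phi a\ge(0)s$ and $(n+1)\phi a-(n)\phi a>(n+1)s-(n)s$; this is possible because $\im(a)$ is infinite. Then pick a surjection $\psi$ mapping $[0,(0)\phi a]$ onto $[0,(0)s]$ and each $[(n)\phi a,(n+1)\phi a]$ monotonically onto $[(n)s,(n+1)s]$. This gives $\phi a\psi=s$, as in \cref{term}. Then $x\mapsto\phi x\psi$ is open, because left multiplication by an injection and right multiplication by a surjection are open maps in the pointwise topology (\cref{strong surjective}, \cref{open D1}). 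That lemma is the other ingredient your proposal lacks.
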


\begin{theorem}\label{thm:endZ}
The following hold for topologies on the monoid \(\operatorname{End}(\Z, \leq)\):
\begin{enumerate}[\rm(1)]
    \item The semigroup Zariski topology coincides with the pointwise topology and is thus the coarsest Hausdorff semigroup topology (\cref{zariski_order}).
    \item There are infinitely many Polish semigroup topologies (\cref{infinite polish Z}).
    \item There is a unique maximal Polish semigroup topology. Moreover this topology contains all Polish semigroup topologies on \(\operatorname{End}(\Z, \leq)\) (\cref{maxZ}).
\end{enumerate}
\end{theorem}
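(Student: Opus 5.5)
We need to plan a proof of Theorem thm:endZ with its three parts (1)–(3).

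For part (1), the first step is to show that the pointwise topology is contained in the semigroup Zariski topology $\mathfrak Z(\Zendd)$. The constant maps $c_y$, $y\in\Z$, lie in $\Zendd$ and satisfy $c_x f = c_{(x)f}$. Consequently
\[
U_{x,y}=\{f: c_x f c_y... \}
\]
cannot be written this way, so instead we write
\[
\{f:(x)f\neq y\}=\{f: c_x f\neq c_y\}.
\]
These complements of $U_{x,y}$ are Zariski open, so each $U_{x,y}$ is Zariski closed. To see that $U_{x,y}$ is also open, note that
\[
U_{x,y}=\bigcap_{z\neq y}\{f:c_xf\neq c_z\},
\]
which is an infinite intersection. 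We therefore use order-theoretic polynomials instead. For a fixed $y$, let $r_y\in\Zendd$ be the retraction with $(z)r_y=y$ for $z\ge y$ and $(z)r_y=y-1$ for $z<y$. Similarly, let $r'_y$ send $z\le y$ to $y$ and $z>y$ to $y+1$. Then $(x)f\ge y$ iff $c_x f r_y=c_y$, and this equals the complement of $\{f: c_xfr_y\neq c_y\}$. Since $c_xfr_y$ takes only the two values $y$ and $y-1$, we also have $(x)f\ge y$ iff $c_xfr_y\neq c_{y-1}$, which is Zariski open. Symmetrically, $(x)f\le y$ is Zariski open. Intersecting the two gives $U_{x,y}$ open.

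For the reverse inclusion, it is standard that the Zariski topology is contained in every Hausdorff semigroup topology (solution sets of equations are closed), and it suffices to note that the pointwise topology is a Hausdorff semigroup topology. This gives that pointwise $=\mathfrak Z$ is the coarsest Hausdorff semigroup topology.

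For part (3), we use the automatic continuity theorem (borelcontinuous-XX). The plan is to define a topology $\mathcal T_{\max}$ on $\Zendd$: a natural candidate refines the pointwise topology by additionally making the image-type invariants open. Examples are the sets $\{f:\min\im f=k\}$, $\{f:\im f \text{ bounded above}\}$, and the sets where $f$ is eventually constant with a given value. We then verify that $\mathcal T_{\max}$ is a Polish semigroup topology. Next, we show that $\Zendd$ with $\mathcal T_{\max}$ has property $\mathbb{XX}$ with respect to a dense $G_\delta$ set $A$, say the bijections-up-to-shift or the surjective maps with unbounded image both ways. For $(a,s)\in A\times S$ we need a polynomial $t_{a,s}$ with $(a)t_{a,s}=s$ that is open on $A$; for instance $x\mapsto b_0 x b_1$, where $b_0$ is an injective "stretch" and $b_1$ is a surjective collapse chosen so that $b_0 a b_1=s$.

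Given any Polish semigroup topology $\mathcal T$, we want to show $\mathcal T\subseteq\mathcal T_{\max}$. The identity map $(\Zendd,\mathcal T_{\max})\to(\Zendd,\mathcal T)$ is Borel, because $\mathcal T\supseteq$ pointwise by part (1). Both topologies are Polish, and the pointwise topology is countably generated, so the Borel structures agree by Lusin–Souslin applied to the continuous bijection. By the theorem this identity is then continuous, which gives $\mathcal T\subseteq\mathcal T_{\max}$. Uniqueness of the maximal topology follows immediately.

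For part (2), we exhibit infinitely many intermediate topologies, for example by generating with pointwise together with the sets $\{f:\min\im f = k\}$ for $k$ in a given subfamily, or by using the single invariant "image bounded below" at varying levels. For each such topology we check Polishness (it embeds as a $G_\delta$ in a product) and continuity of multiplication, and then verify that they are pairwise distinct.

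The main obstacle is the construction in (3): choosing $\mathcal T_{\max}$ and the set $A$ so that both continuity of multiplication and openness of $t_{a,s}\restriction_A$ hold. In particular, the images of open sets under $x\mapsto b_0xb_1$ must be open in the finer topology. I would first try $A$ = maps with image unbounded in both directions.
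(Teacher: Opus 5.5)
Your part (1) is correct and more direct than the paper's. The identities \(c_xfr_y\neq c_{y-1}\iff (x)f\geq y\) and \(c_xfr'_y\neq c_{y+1}\iff (x)f\leq y\) exhibit each \(U_{x,y}\) as an intersection of two subbasic Zariski-open sets. The paper instead deduces this from a general lemma on \(T_1\) shift-continuous topologies (\cref{thm-elliott-1}, \cref{zariski_order}). Parts (3) and (2), however, have genuine gaps.

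\textbf{Part (3).} No choice of \(\mathcal T_{\max}\) and \(A\) can make your strategy work, because the maximum Polish semigroup topology on \(\Zendd\) has property \(\mathbb{XX}\) with respect to no set.

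\emph{Why it fails.} Any topology containing all Polish semigroup topologies contains \(\Tau_0\) from \cref{Def71}. In \(\Tau_0\) every finite-image map is isolated, so every constant \(c_y\) is isolated and therefore lies in every dense \(A\). Now take a non-isolated \(s\); one exists, since \(\Zendd\) is uncountable. If \(t_{c_y,s}\) had degree \(0\), it would be the constant map \(s\), and openness on \(A\) would make \(\{s\}\) open. If it had degree \(\geq 1\), then \(s=b_0c_yb_1\cdots c_yb_n\) would be constant, hence isolated. Either way we get a contradiction.

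Your own candidates already fail condition (1). Your \(\mathcal T_{\max}\) makes the nonempty set \(\{f:\im(f)\text{ bounded above}\}\) open, and this set misses every map whose image is unbounded in both directions.

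\emph{The missing idea.} Use \(\mathbb{XX}\) only on the top \(\mathscr J\)-class \(\operatorname{End}^\infty(\Z,\leq)\) (which is clopen in \(\Tau_0\)), with the pointwise topology. Your ``injective stretch, surjective collapse'' polynomial is exactly \cref{term} combined with the openness of such shifts (\cref{open D1}), which gives \cref{XXD1}. The maximum topology is then assembled blockwise. The paper splits \(\Zendd\) into four \(\Tau_0\)-clopen blocks:
\begin{enumerate}
\item \(C^+\cap C^-\): this block is \(G_\delta\), so any Polish semigroup topology restricts to a Polish semigroup topology on it containing the pointwise one. By \cref{cor:XX_main} the restriction equals the pointwise topology.
\item The finite-image maps: this block is discrete in \(\Tau_0\).
\item \(C^+\setminus C^-\): handled locally by translating with units onto a copy of \(\Endd\) and applying \cref{maxN}, which itself rests on uniqueness for \(\operatorname{End}^\infty(\N,\leq)\).
\item \(C^-\setminus C^+\): follows from the third block via the involution \(z\mapsto -z\).
\end{enumerate}
You would also still need to construct \(\Tau_0\) and verify that it is a Polish semigroup topology; your sketch leaves this open.

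\textbf{Part (2).} Your concrete suggestion cannot produce infinitely many topologies.
\begin{itemize}
\item Right multiplication by the unit \(s_z\colon x\mapsto x+z\) is a homeomorphism in any semigroup topology. It maps \(\{f:\min\im(f)=k\}\) onto \(\{f:\min\im(f)=k+z\}\), so if one level is open, all levels are.
\item For \(j\notin K\), the set \(\{f:\min\im(f)=j\}\) is not open in the topology you generate. Its points have only pointwise neighbourhoods, and these contain maps unbounded below.
\item Hence only \(K=\varnothing\) or \(K=\Z\) yields a semigroup topology. The same translation obstruction applies to any invariant indexed by a position \(k\in\Z\), such as ``bounded below by \(k\)''.
\end{itemize}
You need a parameter that is invariant under the units. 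The paper uses image size. \(\Tau_n\) makes the ideal \(D_n\) of maps with image size \(<n\) clopen and isolates every map of finite image size \(\geq n\). A finite-image map is isolated in \(\Tau_n\) exactly when its image size is at least \(n-1\), so the \(\Tau_n\) for \(n\geq 2\) are pairwise distinct.
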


In Table \ref{table-the-only}, the reader can see a summary of how our results complement the existing results in the literature.

 %We also modified the columns to be more applicable to this paper.

\begin{figure}
    \centering

\begin{tabular}{ll|c|c|c|c|l}
                                &                                     &MP & AC
                                          & \# PST                               &  Z                        &Reference                                                                                          \\ \hline
    full transformations of $\N$                 & $\N ^ \N$                          & \cmark  & \cmark
                                          & 1                                    & \cmark 
                                                                       &
    \cite{main}
    \\
    binary relations on $\N$                     & $B_{\N}$                             & \xmark& \cmark
                                          & 0                                     & \cmark        &
    \cite{main}
    \\
    partial transformations of $\N$               & $P_{\N}$                            & \cmark & \cmark
                                          & 1                                     & \cmark                            &
    \cite{main}
    \\
    partial bijections of $\N$                   & $I_{\N}$                            & \cmark & \cmark
                                          & $\aleph_0$                               & \cmark     
                                                                              &
    \cite{main}+\cite{BEMP}
    \\
    injective transformations of $\N$            & $\Inj(\N)$                         & \cmark  & \cmark
                                          & $\geq\aleph_0$                             & \cmark 
                                                                              &
    \cite{main}
    \\
    surjective transformations of $\N$           & $\Surj(\N)$   &?                       & ?
                                          & $\geq\aleph_0$                             & ?      
                                                                              &
    \cite{main}
    \\
    continuous self-maps on $[0, 1] ^ \N$ & $C([0, 1] ^ \N)$ & \cmark                    & ?
                                          & 1                                          & ?                & \cite{main}                                   \\
    continuous self-maps on $2 ^ \N$      & $C(2 ^ \N)$  & \cmark                        & \cmark
                                          & 1                                          & ?                                       & \cite{main}                                     \\
    endomorphisms of \((\N,\leq)\)    & $\operatorname{End}(\N,\leq)$                      & \cmark    & ?
                                          &    $\geq\aleph_0$             & \cmark                            & \cref{thm:endN}                                    \\
endomorphisms of \((\N,<)\)    & $\operatorname{End}(\N,<)$                      &?    & \xmark
                                          & $2^{\aleph_0}$                                      & \cmark                        &\cref{thm:endNl}                                \\
unbounded endomorphisms of \((\N,\leq)\)    & $\operatorname{End}^{\infty}(\N,\leq)$                         & \cmark & ?
                                          & 1                                          & \cmark                             & \cref{thm:endNi}                                    \\
 endomorphisms of \((\Z,\leq)\)    & $\operatorname{End}(\Z,\leq)$                          & \cmark &?
                                          & $\geq \aleph_0$                   & \cmark                         &     \cref{thm:endZ}                                \\
    endomorphisms of \((\mathbb{Q},\leq)\)    & $\operatorname{End}(\mathbb{Q},\leq)$                       & \cmark   & \xmark
            & 1                                           & \cmark                        & \cite{PiSc}+\cite{main}                   \\
 endomorphisms of the random graph &$\operatorname{End}(R)$& \cmark   & \cmark
                                          & 1          & \cmark                     & \cite{EJMPP}+\cite{KR}+\cite{Hrushovski1992aa}                 \\
endomorphisms of the random poset&$\operatorname{End}(P)$& \cmark   & ?
                                          & 1          & \cmark                     & \cite{EJMPP}                  \\
\end{tabular}

\medskip

\begin{tabular}{rcl}
      MP & = & The semigroup admits a maximum Polish semigroup topology.\\ 
    AC & = & The semigroup admits a second-countable topology with which it has automatic \\
    &&continuity with respect the class of second-countable topological semigroups.\\%&& %That is, there is no second-countable semigroup topology on this semigroup with\\&& respect to which all homomorphisms from it to second-countable topological\\&& semigroups are continuous. \\
    \# PST  & = & Number of Polish semigroup topologies \\
    Z       & = & The semigroup Zariski topology has a countable basis. \\
       (\cmark) &=& The assertion is true. \\
       (\xmark) &=& The assertion is false.\\
    (?) &=& Unknown.
\end{tabular}

\caption{Summary of Main Results}
\label{table-the-only}
\end{figure}

\section{Maximal Polish semigroup topologies and property $\mathbb{XX}$}

%Mention that the supremum of two semigroup topologies is a semigroup topology.
In this section, we develop techniques of finding maximal Polish topologies on a given semigroup.
The following useful lemma follows from Lemma~13.2 and Lemma~13.3 of~\cite{Kechris1995}.

\begin{lemma}[Folklore]\label{PolishBuilding}
If \(\Tau_0\subseteq \Tau_1\subseteq \ldots\) is  a sequence of Polish topologies on a set \(X\), then the least topology containing \(\bigcup_{i\in \N}\Tau_i\) is Polish as well.
In particular, if \(\Tau\) is a Polish topology on a set \(X\) and \((K_i)_{i\in \N}\) is a sequence of sets which are closed with respect to \(\Tau\), then the least topology containing \(\Tau\) and all of the sets \(K_i\) is Polish.
\end{lemma}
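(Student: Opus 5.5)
We need to prove the folklore lemma: an increasing union of Polish topologies generates a Polish topology, and adding countably many closed sets to a Polish topology keeps it Polish.

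For the first claim, let $\Tau_\infty$ be the topology generated by $\bigcup_i \Tau_i$. Consider the diagonal map $\delta\colon X\to \prod_{i\in\N}(X,\Tau_i)$, $x\mapsto (x,x,\ldots)$. The product is Polish as a countable product of Polish spaces.

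First, $\delta$ is a homeomorphism from $(X,\Tau_\infty)$ onto its image $\Delta=\{(x_i): x_i=x_j \text{ for all } i,j\}$. The subbasic open sets of the product restricted to $\Delta$ are exactly $\delta(U)$ with $U\in\Tau_i$ for some $i$, and these generate $\Tau_\infty$.

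Next, $\Delta$ is closed in the product. All the topologies are Hausdorff and $\Tau_0\subseteq\Tau_i$, so for $i\le j$ the identity $(X,\Tau_j)\to(X,\Tau_i)$ is continuous. Suppose $(x_i)\notin\Delta$, so $x_i\neq x_j$ for some $i<j$. Choose disjoint $\Tau_i$-open sets $U\ni x_i$ and $V\ni x_j$. Then $V\in\Tau_i\subseteq\Tau_j$, so $\{(y_k): y_i\in U,\ y_j\in V\}$ is an open neighbourhood of $(x_i)$ disjoint from $\Delta$. Since a closed subspace of a Polish space is Polish, $(X,\Tau_\infty)$ is Polish. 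Here we used monotonicity only to know that $V$ is open in $\Tau_j$. Alternatively, Hausdorffness of $\Tau_0$ together with the fact that each $\Tau_i$ refines $\Tau_0$ already separates any two distinct coordinates.

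For the second claim, we first use the standard fact (Kechris 13.2) that for a Polish $\Tau$ and a $\Tau$-closed set $K$, the topology $\Tau_K$ generated by $\Tau\cup\{K\}$ is Polish. Indeed, $(X,\Tau_K)$ is the topological sum of $K$ and $X\setminus K$ with their subspace topologies. The set $K$ is closed and $X\setminus K$ is open, so both pieces are Polish, and a countable disjoint sum of Polish spaces is Polish.

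We then set $\Tau_0=\Tau$ and let $\Tau_{n+1}$ be the topology generated by $\Tau_n\cup\{K_n\}$. Since $K_n$ is $\Tau$-closed, it is also $\Tau_n$-closed, so each $\Tau_{n+1}$ is Polish by induction. The chain is increasing, and the topology generated by $\Tau$ together with all the $K_i$ equals the topology generated by $\bigcup_n\Tau_n$. The first claim then gives that this topology is Polish.

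The main point needing care is closedness of the diagonal, which is where the increasing hypothesis (or at least a common coarser Hausdorff topology) is used.
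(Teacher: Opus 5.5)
Your proof is correct and follows essentially the same route as the paper, which simply cites Lemmas~13.2 and~13.3 of Kechris. Those lemmas are proved exactly as you do: adding a single closed set yields the topological sum of two Polish pieces, and $(X,\Tau_\infty)$ embeds as a closed diagonal in $\prod_i (X,\Tau_i)$. Your chain construction, in which $\Tau_{n+1}$ is generated by $\Tau_n\cup\{K_n\}$, correctly reduces the second claim to the first.
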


\begin{definition}
    If \(\mathcal{T}\) is a topology on a set \(X\), then we denote the set of Borel subsets of \(X\) with respect to \(\mathcal{T}\) by \(\mathcal{B}(\mathcal{T})\).
\end{definition}
A function $f:X\rightarrow X$ is called {\em Borel measurable} if $(U)f^{-1}$ is Borel, for each open subset $U$ of $X$.
The following proposition follows from Corollary 15.2 in~\cite{Kechris1995}.

\begin{proposition}
  \label{prop-borel-measurable}
  If $X$ and $Y$ are Polish spaces and $f: X \to Y$ is a Borel measurable
  bijection, then $f ^ {-1}$ is Borel measurable also. In particular, if $\tau_1,\tau_2$ are Polish topologies on the same set such that $ \tau_1\subseteq \mathcal B(\tau_2)$, then $ \mathcal B(\tau_1)= \mathcal B(\tau_2)$.
\end{proposition}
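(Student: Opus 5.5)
The statement to prove is \cref{prop-borel-measurable}. The plan is to derive it from the Lusin--Souslin theorem, which is Corollary~15.2 in~\cite{Kechris1995}: if $X,Y$ are Polish and $f\colon X\to Y$ is an injective Borel measurable map, then $f$ sends Borel subsets of $X$ to Borel subsets of $Y$.

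For the first assertion, let $f\colon X\to Y$ be a Borel measurable bijection and let $U\subseteq X$ be open. Since $f$ is a bijection, the preimage of $U$ under $f^{-1}$ is exactly $(U)f$. By Lusin--Souslin, $(U)f$ is Borel in $Y$. Hence $f^{-1}$ is Borel measurable.

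For the second assertion, apply this to the identity map $\id\colon (X,\tau_2)\to (X,\tau_1)$. The hypothesis $\tau_1\subseteq\mathcal B(\tau_2)$ says precisely that this map is Borel measurable. A map pulling back open sets to Borel sets also pulls back Borel sets to Borel sets, since the preimages of Borel sets form a $\sigma$-algebra. This gives $\mathcal B(\tau_1)\subseteq\mathcal B(\tau_2)$.

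For the reverse inclusion, the first part shows that the inverse $\id\colon(X,\tau_1)\to(X,\tau_2)$ is also Borel measurable. So every $\tau_2$-open set is $\tau_1$-Borel, and by the same $\sigma$-algebra argument $\mathcal B(\tau_2)\subseteq\mathcal B(\tau_1)$. Therefore $\mathcal B(\tau_1)=\mathcal B(\tau_2)$.

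The only nontrivial ingredient is Lusin--Souslin itself, which I would quote rather than reprove. If a proof were wanted, the standard route is to refine the Polish topology on $X$ to a finer Polish topology that has the same Borel sets and makes $f$ continuous. One then shows that injective continuous images of Borel sets are Borel, by separating the images of disjoint analytic sets; this is the main obstacle.
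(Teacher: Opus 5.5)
Your proposal is correct and follows the paper's route. The paper gives no proof beyond deriving the proposition from Corollary 15.2 of Kechris (the Lusin--Souslin theorem), which is exactly what you do: preimages under $f^{-1}$ are images under $f$, and you then apply the first part to the identity map between $(X,\tau_2)$ and $(X,\tau_1)$, with the $\sigma$-algebra argument for both inclusions spelled out where the paper leaves it implicit.
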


\begin{definition}
If \(A\subseteq X\), then we denote the subspace topology that \(\mathcal{T}\) induces on \(A\) by \(\mathcal{T}|_A\).
\end{definition}

A subset $A$ of a space $X$ is called \emph{meager} if $A$ is contained in the union of countably many nowhere dense sets. A subset $A\subseteq X$ is called {\em comeager}, if the set $X\setminus A$ is meager. The following proposition is well-known but we include its short proof.
\begin{proposition}[Folklore]\label{comeager}
Suppose that \(\Tau_1, \Tau_2\) are topologies on a set \(X\) where \(\Tau_2\) is second-countable.
\begin{enumerate}[\rm(1)]
    \item If \(\mathcal{B}(\Tau_2)\subseteq \mathcal{B}(\Tau_1)\), then there is a subset \(C\) of \(X\) which is comeager with respect to \(\Tau_1\) such that \(\Tau_2|_C\subseteq\Tau_1|_C\).
    \item  If \(\Tau_1\subseteq \Tau_2\) are Polish,  then there is a subset \(C\) of \(X\) which is comeager with respect to \(\Tau_1\) such that \(\Tau_1|_C=\Tau_2|_C\).
\end{enumerate}
\end{proposition}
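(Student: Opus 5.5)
For part (1), fix a countable basis $\{V_n : n\in\N\}$ of $\Tau_2$. Since each $V_n$ lies in $\mathcal{B}(\Tau_2)\subseteq\mathcal{B}(\Tau_1)$, every $V_n$ is $\Tau_1$-Borel. I will use the standard fact that every Borel set has the Baire property (Kechris~8.22). This holds because the sets with the Baire property form a $\sigma$-algebra that contains the open sets, and that argument needs no completeness of $\Tau_1$. So for each $n$ there is a $\Tau_1$-open set $U_n$ such that $V_n\,\triangle\, U_n$ is $\Tau_1$-meager. Let $M_n\supseteq V_n\triangle U_n$ be a countable union of $\Tau_1$-closed nowhere dense sets, and put
\[ C := X\setminus \bigcup_{n\in\N} M_n. \]
This $C$ is comeager with respect to $\Tau_1$. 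For every $n$ we then have $V_n\cap C = U_n\cap C$, so $V_n\cap C$ is $\Tau_1|_C$-open. Because the sets $V_n\cap C$ form a basis for $\Tau_2|_C$, and every $\Tau_2|_C$-open set is a union of such basic sets, it follows that $\Tau_2|_C\subseteq\Tau_1|_C$.

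For part (2), assume $\Tau_1\subseteq\Tau_2$ are Polish. Then $\Tau_1\subseteq\mathcal{B}(\Tau_2)$, and \cref{prop-borel-measurable} gives $\mathcal{B}(\Tau_1)=\mathcal{B}(\Tau_2)$. In particular $\mathcal{B}(\Tau_2)\subseteq\mathcal{B}(\Tau_1)$. Since $\Tau_2$ is Polish, it is second-countable, so part (1) applies and yields a $\Tau_1$-comeager set $C$ with $\Tau_2|_C\subseteq\Tau_1|_C$. The reverse inclusion $\Tau_1|_C\subseteq\Tau_2|_C$ is immediate from $\Tau_1\subseteq\Tau_2$, so $\Tau_1|_C=\Tau_2|_C$.

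The only nontrivial ingredient is the Baire property of Borel sets in part (1). I will cite it from~\cite{Kechris1995} rather than reprove it. The remaining points to check are that the meager witnesses $M_n$ are countably many, so their union is still meager, and that the choice $U_n$ is made relative to $\Tau_1$, so the comeagerness of $C$ is with respect to $\Tau_1$ as the statement requires.
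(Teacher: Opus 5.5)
Your proposal is correct and is essentially the paper's own proof. For (1), both arguments use the Baire property of the countably many basic $\Tau_2$-open sets (Kechris, Proposition~8.22) and delete the countably many $\Tau_1$-meager discrepancy sets. For (2), both use \cref{prop-borel-measurable} to get $\mathcal{B}(\Tau_2)=\mathcal{B}(\Tau_1)$ and then combine (1) with $\Tau_1\subseteq\Tau_2$. Your remark that the Baire property of Borel sets needs no completeness of $\Tau_1$ is also correct, and it matters because (1) is stated for an arbitrary topology $\Tau_1$.
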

\begin{proof}
\((1):\) Let \(\makeset{U_i}{\(i\in \N\)}\) be a countable basis for \(\Tau_2\). For each \(i\in \mathbb{N}\), we have that \(U_i\in \mathcal{B}(\Tau_2)\subseteq \mathcal{B}(\Tau_1)\). Thus for all \(i\), the set \(U_i\) is Borel in \(\Tau_1\). Proposition 8.22 of \cite{Kechris1995} implies that for each $i\in\N$ there are a meager set \(M_i\) and open set \(U_i'\) (both with respect to \(\Tau_1\)) such that \(U_i\backslash M_i = U_i'\backslash M_i\). 
Let \(C =X\backslash (\bigcup_{i\in \N} M_i)\). By construction, the set \(C\) is comeager with respect to \(\Tau_1\), and all of the sets \(U_i\cap C\) belong to \(\Tau_1|_C\). 
As these sets are a basis for \(\Tau_2|_C\), the result follows.

\((2):\)
By \cref{prop-borel-measurable}, it follows that \(\mathcal{B}(\Tau_2)= \mathcal{B}(\Tau_1)\).
Thus by \((1)\), there is a subset \(C\) which is comeager with respect to \(\Tau_1\) with \(\Tau_2{\restriction}_C\subseteq\Tau_1{\restriction}_C\). As \(\Tau_1\subseteq \Tau_2\), the result follows.
\end{proof}

\begin{definition}
    We say that a topology \(\Tau\) on a set \(X\) is \emph{regular} if every neighborhood of each point in \(X\) contains a closed neighborhood of that point.
\end{definition}
Note that we don't assume regular spaces to be Hausdorff. In fact, \cref{regfromreg}(2) is false if we replace all instances of the word ``regular'' with the words ``regular Hausdorff''.
However we are primarily concerned with Polish spaces and we discuss regular spaces in order to arrive at \cref{borelcontinuous-XX}, which is applicable to metrizable spaces.
\begin{lemma}\label{regfromreg}
    Suppose that \(X, Y\) are sets, \(f:X\to Y\) is a function and \(\Tau_1, \Tau_2\) are regular topologies on \(Y\). The following topologies are regular:
    \begin{enumerate}[\rm(1)]
        \item The topology on \(Y\) generated by \(\Tau_1\) and \(\Tau_2\).
        \item The topology \((\Tau_1)f^{-1}:=\makeset{(U)f^{-1}}{\(U\in \Tau_1\)}\) on \(X\).
    \end{enumerate}
\end{lemma}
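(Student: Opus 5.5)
Both parts follow directly from the definition of regularity. For each point we take a basic neighborhood and produce a closed neighborhood inside it. "Regular" here means only that every neighborhood of a point contains a closed neighborhood of that point; no Hausdorff condition is imposed, so we never need to separate points.

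For (1), let \(\Tau\) be the topology on \(Y\) generated by \(\Tau_1\cup\Tau_2\). Sets of the form \(U_1\cap U_2\) with \(U_i\in\Tau_i\) form a basis for \(\Tau\), so it suffices to handle these. Fix \(y\in U_1\cap U_2\). By regularity of \(\Tau_i\), pick a \(\Tau_i\)-open \(V_i\) and a \(\Tau_i\)-closed \(F_i\) with \(y\in V_i\subseteq F_i\subseteq U_i\). Since \(\Tau_i\subseteq\Tau\), each \(F_i\) is \(\Tau\)-closed and each \(V_i\) is \(\Tau\)-open. Hence \(F_1\cap F_2\) is \(\Tau\)-closed, contains the \(\Tau\)-open set \(V_1\cap V_2\ni y\), and lies inside \(U_1\cap U_2\). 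Any \(\Tau\)-neighborhood of \(y\) contains such a basic set, so \(\Tau\) is regular.

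For (2), first note that \((\Tau_1)f^{-1}\) is a topology on \(X\), because preimages commute with arbitrary unions and finite intersections. Its closed sets are exactly the sets \((F)f^{-1}\) with \(F\) \(\Tau_1\)-closed, since \(X\setminus (U)f^{-1}=(Y\setminus U)f^{-1}\). Let \(x\in X\) and let \(W\) be a neighborhood of \(x\); then \(W\) contains an open set \((U)f^{-1}\ni x\) with \(U\in\Tau_1\). Thus \((x)f\in U\), and regularity of \(\Tau_1\) gives a \(\Tau_1\)-open \(V\) and a \(\Tau_1\)-closed \(F\) with \((x)f\in V\subseteq F\subseteq U\). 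Pulling back, \(x\in (V)f^{-1}\subseteq (F)f^{-1}\subseteq (U)f^{-1}\subseteq W\), and \((F)f^{-1}\) is a closed neighborhood of \(x\).

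The only point requiring any care is in (2): we must not assume \(f\) is injective or surjective. Neither step uses this, since preimages preserve complements and inclusions regardless. This also explains the remark that the Hausdorff version of (2) fails: if \(f\) is not injective, \((\Tau_1)f^{-1}\) cannot separate distinct points with the same image.
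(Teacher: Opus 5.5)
Your proof is correct and follows essentially the same route as the paper. In (1) you intersect closed neighborhoods chosen inside the basic $\Tau_1$- and $\Tau_2$-neighborhoods, and in (2) you pull back a closed neighborhood $F\subseteq U$ of $(x)f$, using that preimages commute with complements.
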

\begin{proof}
    \((1):\) Let \(y\in Y\). The set \[\makeset{N_1\cap N_2}{\(N_1\) is a \(\Tau_1\) neighborhood of \(y\), 
   \(N_2\) is a \(\Tau_2\) neighborhood of \(y\)}\] is a neighbourhood basis for \(y\) in the topology generated by \(\Tau_1\) and \(\Tau_2\).
   Let \(N_1\cap N_2\) be such a neighbourhood, we need only show that it contains a closed neighborhood of \(y\). As \(\Tau_1\) and \(\Tau_2\) are regular, there are closed neighborhoods \(N_1'\) and \(N_2'\) of \(y\) with respect to \(\Tau_1\) and \(\Tau_2\) respectively such that \(N_1'\subseteq N_1\) and \(N_2'\subseteq N_2\). Thus the neighborhood \(N_1'\cap N_2'\) is closed in the topology generated by \(\Tau_1\) and \(\Tau_2\). As \(N_1'\cap N_2'\subseteq N_1\cap N_2\) the result follows.

   \((2):\) Let \(x\in X\) and suppose \(x\in V\in (\Tau_1)f^{-1}\). It follows that there is \(U\in \Tau_1\) with \(V=(U)f^{-1}\) and hence \((x)f\in U\). As \(\Tau_1\) is regular, there is a closed neighborhood \(U'\) of \((x)f\) with \(U'\subseteq U\). Thus $(Y\setminus U')f^{-1}$ is an open set, witnessing that \((U')f^{-1}\subseteq V\) is a closed neighborhood of \(x\) as required.
\end{proof}

% \begin{lemma}\label{lem:dense_cont_points}
% Suppose that \((X, \Tau_1)\) is a metrizable space, \((X,\Tau_2)\) is a first countable space and \(D\subseteq X\) is dense with respect to the topology generated by \(\Tau_1\) and \(\Tau_2\).
% Suppose further that \(\Tau_1|_D\subseteq \Tau_2|_D\).
% It follows that the identity map \(id:(X, \Tau_2)\to (X, \Tau_1)\) is continuous at each point in \(D\).
% \end{lemma}
% \begin{proof}
% Let \(d_1\) be a metric compatible with \(\Tau_1\).
% Let \(a\in D\) be arbitrary and let \(U_0\supseteq U_1 \ldots\) be a neighbourhood basis for \(a\) with respect to \(\Tau_2\) consisting of open sets.
% Moreover suppose that \((a_i)_{i\in \N}\) is a sequence in \(X\) converging to \(a\) with respect to \(\Tau_2\) such that \(a_i\in U_i\). To conclude that \(id:(X, \Tau_2)\to (X, \Tau_1)\) is continuous at \(a\), we need only show that \((a_i)_{i\in \N}\) converges to \(a\) with respect to \(\Tau_1\).

% For each \(i\in \mathbb{N}\) let \(a_i'\in D\cap U_i\cap \makeset{b\in S}{\(d_1(b,a_1)<\frac{1}{2^i}\)}\) (this exists by the choice of \(D\)). Thus the sequence \((a_i')_{i\in \N}\) converges to \(a\) with respect to \(\Tau_2\).
% As \(\Tau_2|_A\supseteq \Tau_1|_A\), it follows that \((a_i')_{i\in \N}\) converges to \(a\) with respect to \(\Tau_1\) as well. 
% As for all \(i\in\N\) we have \(d_1(a_i,a_i')<\frac{1}{2^i}\) it follows that \((a_i)_{i\in \N}\) converges to \(a\) with respect to \(\Tau_1\) as well.

% \end{proof}

The following technical lemma is useful for finding the points of continuity of the identity function on a set $X$, when $X$ is endowed with multiple topologies.

\begin{lemma}\label{lem:dense_cont_points}
Suppose that \(X\) is a set and \(\Tau_1, \Tau_2\) are topologies on \(X\) with \(\Tau_2\) regular. Suppose further that \(D\subseteq X\) is dense with respect to the topology generated by \(\Tau_1\) and \(\Tau_2\), and the identity map \(\id:(D, \Tau_1|_D)\to (D, \Tau_2|_D)\) is continuous.
In this case the identity map \(\id:(X, \Tau_1)\to (X, \Tau_2)\) is continuous at each point in \(D\).
\end{lemma}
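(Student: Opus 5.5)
We will verify continuity at a fixed point \(a\in D\) straight from the definition, using regularity of \(\Tau_2\) to pass to a closed neighbourhood and the density of \(D\) in the join topology to move from \(D\) to all of \(X\). So fix \(a\in D\) and a \(\Tau_2\)-open set \(V\ni a\). We must find a \(\Tau_1\)-open \(U\ni a\) with \(U\subseteq V\).

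First, since \(\Tau_2\) is regular, we choose a \(\Tau_2\)-closed \(\Tau_2\)-neighbourhood \(N\subseteq V\) of \(a\), and a \(\Tau_2\)-open \(W\) with \(a\in W\subseteq N\). Next, because \(\id:(D,\Tau_1|_D)\to(D,\Tau_2|_D)\) is continuous and \(W\cap D\) is \(\Tau_2|_D\)-open and contains \(a\), there is a \(\Tau_1\)-open \(U\) with \(a\in U\) and \(U\cap D\subseteq W\cap D\subseteq N\).

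The claim is that \(U\subseteq N\subseteq V\), which gives continuity at \(a\). Suppose \(x\in U\setminus N\). Then \(U\cap (X\setminus N)\) is open in the topology generated by \(\Tau_1\) and \(\Tau_2\): \(U\) is \(\Tau_1\)-open and \(X\setminus N\) is \(\Tau_2\)-open because \(N\) is \(\Tau_2\)-closed. This set is also nonempty, since it contains \(x\). By density of \(D\) in the join topology, it therefore meets \(D\) in some point \(d\). But \(d\in U\cap D\subseteq N\), contradicting \(d\notin N\). Hence \(U\subseteq N\subseteq V\).

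The main point to check is that \(N\) must be closed for \(X\setminus N\) to be open; this is exactly where regularity of \(\Tau_2\) is used. No Hausdorffness is needed, which matches the paper's convention on regular spaces.
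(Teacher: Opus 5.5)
Your proof is correct and follows essentially the same route as the paper. Both arguments use regularity of $\Tau_2$ to pass to a $\Tau_2$-closed neighbourhood, continuity on $D$ to obtain a $\Tau_1$-open set whose trace on $D$ lies in that neighbourhood, and density of $D$ in the join topology to show the open set $U\setminus N$ (your notation) is empty; the only cosmetic difference is that you pull back the open interior $W\cap D$ rather than the closed neighbourhood itself.
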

\begin{proof}
    Let \(d\in D\) and \(U\) be a neighborhood of \(d\) with respect to \(\Tau_2\), we only need to show that \(U\) is a neighborhood of \(d\) with respect to \(\Tau_1\).
    As \(\Tau_2\) is regular, we may assume without loss of generality that \(U\) is closed with respect to \(\Tau_2\).

    The set \(U\cap D\) is a closed neighborhood of \(d\) in the topological space \((D, \Tau_2|_D)\). Thus \(U\cap D\) is a closed neighborhood of \(d\) in the topological space \((D, \Tau_1|_D)\). Let \(V\in \Tau_1\) be such that \(d\in V\) and \(V\cap D\subseteq U\). It suffices to show that \(V\subseteq U\).
    In other words, we only need to show that \(V\backslash U=\varnothing\). As \(U\) is closed in \(\Tau_2\) and \(V\) is open in \(\Tau_1\), the set \(V\backslash U\) is open in the topology generated by \(\Tau_1\) and \(\Tau_2\). Thus \(D\cap (V\backslash U)\) is dense in \(V\backslash U\). But \(D\cap V\subseteq U\), so \(D\cap (V\backslash U)=\varnothing\) is dense in \(V\backslash U\). Thus \(V\backslash U = \varnothing\) as required. 
\end{proof}
% The following result follows from Proposition 8.22 from \cite{Kechris1995}.
% \begin{theorem}[Proposition 3.5.1]\label{almost continuous}
% Let \(T\) be a topological space. For all Borel sets \(B\) there exists an open set \(U_B\) such that \(U_B\triangle B\) is meager.
% \end{theorem}
A map $f: X\rightarrow Y$ is called {\em open} if $(U)f$ is open for each open subset $U$ of $X$. To keep the paper self-contained, we include a short proof of the following folklore fact.

\begin{lemma}[Folklore]\label{meager preimage}
The preimage of a meager set under an open continuous map is meager.
\end{lemma}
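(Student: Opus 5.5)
The plan is to reduce to the nowhere dense case and then pass to countable unions. Let \(f:X\to Y\) be open and continuous, and let \(M\subseteq Y\) be meager. Then \(M\subseteq\bigcup_{i\in\N}N_i\) with each \(N_i\) nowhere dense in \(Y\). Preimages commute with unions, so \((M)f^{-1}\subseteq\bigcup_{i\in\N}(N_i)f^{-1}\). It therefore suffices to show that the preimage of a nowhere dense set is nowhere dense. Since \((N)f^{-1}\subseteq(\overline{N})f^{-1}\) and \(\overline N\) is still nowhere dense, we may assume \(N\) is closed.

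For closed nowhere dense \(N\subseteq Y\), continuity of \(f\) makes \((N)f^{-1}\) closed in \(X\). So it only remains to show that \((N)f^{-1}\) has empty interior. Suppose \(U\subseteq(N)f^{-1}\) is open and nonempty. Openness of \(f\) makes \((U)f\) open in \(Y\). Since \(U\neq\varnothing\), the set \((U)f\) is nonempty, and it is contained in \(N\). This contradicts the fact that \(N\) has empty interior. Hence \((N)f^{-1}\) is closed with empty interior, i.e.\ nowhere dense, and so \((M)f^{-1}\) is meager.

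There is no real obstacle in this argument. The only points needing care are the reduction to closed nowhere dense sets, which is needed to make the preimage closed, and the observation that the image of a nonempty set is nonempty. Continuity is used only to get closedness of the preimage, and openness only to get the contradiction.
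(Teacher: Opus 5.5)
Your proof is correct and follows essentially the same route as the paper: reduce to nowhere dense sets, then use openness of \(f\) to push a nonempty open subset of the preimage forward into a set with empty interior. The only difference is cosmetic. You replace \(N\) by \(\overline{N}\) before pulling back, so continuity gives a closed preimage; the paper instead works with \(\overline{(A_n)f^{-1}}\) in \(X\) and uses continuity in the form \((\overline{(A_n)f^{-1}})f\subseteq\overline{A_n}\).
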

\begin{proof}
Let $X, Y$ be topological spaces and $f: X\rightarrow Y$ be an open continuous map. Fix any meager set $B\subseteq Y$. Then there exist nowhere dense subsets $A_n$, $n\in\w$ such that $B=\bigcup_{n\in\w}A_n$. Since $(B)f^{-1}=\bigcup_{n\in\w}(A_n)f^{-1}$, it suffices to show that $(A_n)f^{-1}$ is nowhere dense in $X$ for every $n\in\w$. 
Assuming the contrary, there exist $n\in\w$ and a nonempty open set $U$ such that $U\subseteq \overline{(A_n)f^{-1}}$. Observe that
$$(U)f\subseteq (\overline{(A_n)f^{-1}})f\subseteq \overline{(A_n)f^{-1}f}\subseteq\overline{A_n},$$
where the first inclusion holds by the choice of $U$, and the second inclusion follows from the continuity of $f$. 
Thus $(U)f$ is a non-empty open subset of $\overline{A_n}$, which contradicts the choice of $A_n$. Hence $(B)f^{-1}$ is meager. 
\end{proof}

The following lemma is the main tool for proving \cref{borelcontinuous-XX}. %The hypotheses are quite technical but they will be satisfied when $(S,\Tau_1)$ is a topological semigroup with property \(\mathbb{XX}\).

\begin{lemma}\label{lem:bigXX}
    Suppose that \(S\) is a non-empty set, \(\Tau_1\) is a Polish topology on \(S\), \(\Tau_2\) is a second-countable topology on \(S\), and  \(A\) is dense \(G_\delta\) in \((S, \Tau_1)\).
    Suppose further that for all \(a, b\in A\) there is a function \(t_{a, b}:S\to S\) such that
    \begin{itemize}
        \item \(t_{a, b}:(S,\Tau_1)\to (S,\Tau_1)\) is continuous;
        \item \(t_{a, b}:(S,\Tau_2)\to (S,\Tau_2)\) is continuous;
        \item \((a)t_{a,b}=b\);
        \item the restriction \(t_{a, b}{\restriction}_A:(A,\Tau_1|_A)\to (S,\Tau_1)\) is an open map.
    \end{itemize} 
    In this case the following hold:
    \begin{enumerate}
        \item Every comeager subset of \(A\) with respect to \(\Tau_1\) is dense in \(A\) with respect to \(\Tau_2|_A\).
    
        \item If \(\Tau_2\) is regular and \(\mathcal{B}(\Tau_2)\subseteq \mathcal{B}(\Tau_1)\), then \(\id:(S, \Tau_1)\to (S, \Tau_2)\) is continuous at every point in \(A\).
 \end{enumerate}

\end{lemma}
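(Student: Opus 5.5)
For (1), I will fix a set \(C\subseteq A\) that is comeager in \((S,\Tau_1)\), a point \(b\in A\), and a \(\Tau_2\)-open set \(U\ni b\). The goal is to show \(U\cap C\neq\varnothing\). Since \(A\) is dense \(G_\delta\) in a Polish space, \((A,\Tau_1|_A)\) is Polish, hence Baire, and \(C\) is comeager in \(A\) as well.

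Next I pick any \(a\in A\) and consider \(t=t_{a,b}\).
\begin{itemize}
\item Because \(t\) is \(\Tau_2\)-continuous, the set \(W:=(U)t^{-1}\) is \(\Tau_2\)-open and contains \(a\).
\item This does not give a \(\Tau_1\)-open set, so I reorganise: use \(t_{a,b}\) in the other direction. The map \(t_{a,b}{\restriction}_A:(A,\Tau_1|_A)\to(S,\Tau_1)\) is open and \(\Tau_1\)-continuous.
\item By \cref{meager preimage}, \((S\setminus C)t_{a,b}^{-1}\cap A\) is meager in \(A\). Hence \(C':=A\cap (C)t_{a,b}^{-1}\) is comeager in \(A\).
\end{itemize}
So it suffices to find a point \(x\in C'\cap (U)t_{a,b}^{-1}\), since then \((x)t_{a,b}\in C\cap U\).

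This reduces the density statement for the point \(b\) to the density statement near \(a\), for the comeager set \(C'\) and the \(\Tau_2\)-neighbourhood \((U)t_{a,b}^{-1}\) of \(a\). To make the argument non-circular, I first show that some single point behaves well. Let \(\{V_i\}\) be a countable \(\Tau_2\)-basis. Suppose, for contradiction, that some comeager \(C\subseteq A\) misses \(U\cap A\) for a nonempty \(\Tau_2\)-open \(U\) meeting \(A\), say at \(b\).

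For every \(a\in A\), the set \(A\cap (U)t_{a,b}^{-1}\) is a \(\Tau_2|_A\)-neighbourhood of \(a\) that misses the comeager set \(A\cap(C)t_{a,b}^{-1}\). Let \(G:=\{a\in A: \text{every } \Tau_2\text{-neighbourhood of } a \text{ meets every comeager subset of } A\}\). I want to show \(G\neq\varnothing\); by the transfer above, \(G\neq\varnothing\) gives \(b\in G\) for all \(b\), which is exactly (1).

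To see \(G\neq\varnothing\), note first that the \(V_i\) are only \(\Tau_2\)-open, so meagerness is not automatic. Consider instead the sets \(V_i\cap A\) that are \(\Tau_1\)-meager in \(A\). Their union \(M\) is a countable union of meager sets, hence meager, so there is \(a\in A\setminus M\). Every \(\Tau_2\)-basic neighbourhood \(V_i\) of this \(a\) is then non-meager in \(A\), so \(V_i\) meets every comeager subset of \(A\); thus \(a\in G\). Because meagerness is invariant in this way, (1) follows.

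For (2), I apply \cref{comeager}(1) to obtain a \(\Tau_1\)-comeager set \(C\) with \(\Tau_2|_C\subseteq\Tau_1|_C\), and may assume \(C\subseteq A\). By (1), \(C\) is \(\Tau_2\)-dense in \(A\), and it is also \(\Tau_1\)-dense.

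I need \(D=C\) to be dense in \(A\) with respect to the join of \(\Tau_1\) and \(\Tau_2\). To get this, take \(V\cap U\) nonempty with \(V\in\Tau_1\) and \(U\in\Tau_2\), and apply (1) to the comeager set \(C\cap V\cup (A\setminus \overline V)\) suitably; this is the main technical obstacle. Granting join-density, \cref{lem:dense_cont_points}, applied on \(A\) (or on \(S\), using the density of \(A\)), gives continuity of \(\id\) at points of \(C\). Finally, I transfer continuity from points of \(C\) to all of \(A\) via the maps \(t_{a,b}\), which are continuous in both topologies, using \((a)t_{c,a}\)-type translations.
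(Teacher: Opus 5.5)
Your part (1) is correct. The set $G$ of good points is shown nonempty by discarding the countably many $\Tau_2$-basic sets with meager trace on $A$, and is then transported to every $b\in A$ via $t_{a,b}$ and \cref{meager preimage}. This is a direct (non-contradiction) reformulation of the paper's argument.

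The genuine gap is in (2), at the step you yourself flag: showing that $C$ is dense in $A$ for the join $\Tau_1\vee\Tau_2$. Applying (1) to $(C\cap V)\cup(A\setminus\overline{V})$ only tells you that $U$ meets this set, and the witness may well lie in $A\setminus\overline{V}$. In fact no choice of comeager set can rescue this, because the conclusion of (1) for $\Tau_2$ does not imply join-density. Take $S=A=\mathbb{R}$ with the usual topology $\Tau_1$ and $\Tau_2=\{\varnothing,\mathbb{R},\mathbb{Q}\cup(0,1)\}$. Every nonempty $\Tau_2$-open set is non-meager, so every comeager set is $\Tau_2$-dense. Yet the comeager set $\mathbb{R}\setminus\mathbb{Q}$ misses the nonempty join-open set $(\mathbb{Q}\cup(0,1))\cap(2,3)$.

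The missing idea is that the hypotheses of the lemma survive replacing $\Tau_2$ by $\Tau_2':=\Tau_1\vee\Tau_2$. This topology is second-countable, and each $t_{a,b}$, being continuous for both $\Tau_1$ and $\Tau_2$, is continuous for $\Tau_2'$. So apply (1) to $\Tau_2'$ instead of $\Tau_2$. It gives at once that $C$ is $\Tau_2'|_A$-dense, which is exactly the hypothesis \cref{lem:dense_cont_points} needs (with target $\Tau_2'|_A$, regular by \cref{regfromreg}, or simply $\Tau_2|_A$). This is what the paper does.

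Your final transfer step is also only gestured at. The property doing the work there is not continuity of $t_{p,q}$ in both topologies but $\Tau_1$-openness of $t_{p,q}{\restriction}_A$. Take $p\in C$, $q\in A$ and a $\Tau_2$-open $U\ni q$. Then $(U)t_{p,q}^{-1}\cap A$ is a $\Tau_2|_A$-neighbourhood of $p$, hence (by continuity of $\id$ at $p$) a $\Tau_1|_A$-neighbourhood of $p$. Openness of $t_{p,q}{\restriction}_A$ then makes its image a $\Tau_1$-neighbourhood of $q$ contained in $U$.
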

\begin{proof}
    \((1):\) Let \(C\) be a comeager subset of \(A\) with respect to \(\Tau_1\). It follows that \(S\backslash C\) is meager with respect to \(\Tau_1\).
    Suppose for a contradiction that there is \(U\in \Tau_2\) with \(U\subseteq S\backslash C\) but \(U\cap A\neq \varnothing\).
    Let \(\mathcal{B}\) be a countable basis for \(\Tau_2\). Let \(M:=\makeset{B\in \mathcal B}{\(B\) is meager 
    with respect to \((S,\Tau_1)\)}\). It follows that \(V:=\bigcup M\supseteq U\) intersects \(A\).
    Note that \(V\) contains every subset of \(S\) which is meager with respect to \(\Tau_1\) and open with respect to \(\Tau_2\). Also note that $V$ is meager itself with respect to $\Tau_1$. As \(A\) is comeager with respect to \(\Tau_1\), there is \(a\in A\backslash V\).
    Also it follows from above that there is some \(c\in V\cap A\).
    Then \(a\in (V)t_{a,c}^{-1}\cap A\). As $t_{a,c}$ is continuous with respect to \(\Tau_2\), we have \((V)t_{a,c}^{-1}\cap A\in \Tau_2|_A\).

Observe that the preimage of $V$ under the map $t_{a,c}{\restriction}_A$ is equal to $(V)t_{a,c}^{-1}\cap A$. Since the set $V$ is meager with respect to $\Tau_1$ and the map $t_{a,c}{\restriction}_A$ is continuous and open, Lemma~\ref{meager preimage} implies that \((V)t_{a,c}^{-1}\cap A\) is meager in \((A, \Tau_1|_A)\).  
    Recall that \(a\in (V)t_{a,c}^{-1}\). The set \((V)t_{a,c}^{-1}\) is open in \((S,\Tau_2)\) and \[(V)t_{a,c}^{-1}\subseteq ((V)t_{a,c}^{-1}\cap A)\cup S\backslash A.\] So $(V)t_{a,c}^{-1}$ is meager in \((S,\Tau_1)\). It follows from the choice of \(V\) that \((V)t_{a,c}^{-1}\subseteq V\) and so \(a\in V\), a contradiction.

    (2):     As \(A\) is \(G_\delta\) with respect to \(\Tau_1\), the topology \(\Tau_1|_A\) is Polish.
    %\(A\) is \(G_\delta\) wrt \(\Tau_2\). Thus \((A, \Tau_2)\) is Polish.
 % {\color{blue} !Add reference!}  As $\Tau_1$ and $\Tau_2$ are comparable Polish topologies, the subspaces topologies on \(A\) wrt \(\Tau_1\) and \(\Tau_2\) have the same Borel structure.
 Let \(\Tau_2'\) be the topology generated by \(\Tau_1\) and \(\Tau_2\). By \cref{regfromreg}, the topology \(\Tau_2'\) is second-countable and regular. It is easy to check that
 the functions \(t_{a,b}:(S,\Tau_2')\to (S,\Tau_2')\) are all continuous.
Note also that since \(\mathcal{B}(\Tau_2)\subseteq \mathcal{B}(\Tau_1)\), we have
 \(\mathcal{B}(\Tau_2')= \mathcal{B}(\Tau_1)\).
 By Proposition~\ref{comeager}(1) applied to \((A,\Tau_1|_A)\) and \((A,\Tau_2'|_A)\), there is a comeager with respect to \(\Tau_1\) subset \(C\) of \(A\) such that \( \Tau_2'|_C \subseteq \Tau_1|_C\). 
    Thus by (1), the set \(C\) is dense in \((A, \Tau_2'|_A)\). 
    By Lemma~\ref{lem:dense_cont_points} the map \(\id:(A, \Tau_1|_A)\to (A, \Tau_2'|_A)\) is continuous at every point in \(C\).

    Let \(p\in C\) and \(q\in A\). Fix an open neighborhood \(U\) of \(q\) in \((S, \Tau_2)\). Then
    \((U)t_{p, q}^{-1}\cap A\) is an open neighborhood of \(p\) in \((A, \Tau_2|_A)\). Since $p\in C$ and \(\id:(A, \Tau_1|_A)\to (A, \Tau_2'|_A)\) is continuous at every point in \(C\), \((U)t_{p, q}^{-1}\cap A\) is an open neighborhood of \(p\) in \((A, \Tau_1|_A)\). Thus \(((U)t_{p, q}^{-1}\cap A)t_{p, q}\subseteq U\) is a neighbourhood of \(q\) with respect to \((S, \Tau_1)\).
    We have now shown that an arbitrary open neighbourhood \(U\) of \(q\) with respect to \(\Tau_2\) is a neighborhood of \(q\) with respect to \(\Tau_1\) as required.
\end{proof}

%Next we introduce a property that will be central to this paper.

%Following~\cite{group}, a function $f: S\rightarrow S$ of the form $(x)f=a_0xa_1\cdots xa_n$ is called a {\em semigroup polynomial of degree $n$}. We agree that constant functions are semigroup polynomials of degree $0$.
 
The following definition is a partial case of \cite[Definition 7.5]{LunaPhD}. It is also  reminiscent of property \(\overline{\mathbf{X}}\) used in \cite{PiSc}.

\begin{definition}
    We say that a topological semigroup \((S,\Tau)\) has property \(\mathbb W\) with respect to a subset \(A\subseteq S\) if for all \(s\in S\) there exist $a\in A$ and $\{b_0,\ldots,b_n\}\subseteq S^1$ such that the function $\phi: S\rightarrow S$ defined by $(x)\phi=b_0xb_1\cdots xb_n$ has the following properties:
    \begin{enumerate}[\rm(1)]
    \item $(a)\phi=s$;
    \item if \(a\in U\in \Tau|_A\), then \(s\) belongs to the interior of \( (U)\phi\) with respect to \(\Tau\).
    \end{enumerate}
\end{definition}

Note that property \(\mathbb W\) is implied by property $\mathbb{XX}$ (recall \cref{XX}) when \(S\) is a topological semigroup. The following lemma follows from \cite[Lemma 7.6]{LunaPhD}.

\begin{lemma}\label{propertyW}
    If \(S\) is a topological semigroup which has property \(\mathbb W\) with respect to \(A\subseteq S\) and \(\phi:S\to T\) is a semigroup homomorphism of topological semigroups such that \(\phi{\restriction}_A\) is continuous, then \(\phi\) is continuous.
\end{lemma}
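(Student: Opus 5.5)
The plan is to show that $\phi$ is continuous at an arbitrary point $s\in S$ by moving to a suitable point of $A$ with property $\mathbb W$, and then transporting continuity of $\phi{\restriction}_A$ back to $s$ using the homomorphism property.

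Fix $s\in S$ and an open neighbourhood $W$ of $(s)\phi$ in $T$. By property $\mathbb W$ choose $a\in A$ and $b_0,\ldots,b_n\in S^1$ so that the polynomial $(x)\psi=b_0xb_1\cdots xb_n$ satisfies $(a)\psi=s$, together with condition (2) of the definition.

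Next I pass the polynomial through $\phi$. Define $\psi_T:T\to T$ by
\[(y)\psi_T=(b_0)\phi\, y\,(b_1)\phi\cdots y\,(b_n)\phi.\]
Here any $b_i$ equal to the adjoined unit of $S^1$ is simply omitted, and likewise its image under $\phi$. Because $\phi$ is a homomorphism, $(x)\psi\phi=(x)\phi\psi_T$ for all $x\in S$. Since $T$ is a topological semigroup, $\psi_T$ is continuous, being a composition of multiplications by fixed elements and of the diagonal map $y\mapsto(y,\ldots,y)$ followed by multiplication. Hence $(W)\psi_T^{-1}$ is an open neighbourhood of $(a)\phi$, because $(a)\phi\psi_T=(a)\psi\phi=(s)\phi\in W$.

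By continuity of $\phi{\restriction}_A$ at $a$, there is $U\in\Tau|_A$ with $a\in U$ and $(U)\phi\subseteq (W)\psi_T^{-1}$. By condition (2) of property $\mathbb W$, $s$ lies in the interior of $(U)\psi$. For any $u\in U$ we have $(u)\psi\phi=(u)\phi\psi_T\in W$, so $((U)\psi)\phi\subseteq W$. Thus the interior of $(U)\psi$ is a $\Tau$-neighbourhood of $s$ that $\phi$ maps into $W$. Since $W$ was arbitrary, $\phi$ is continuous at $s$, and since $s$ was arbitrary, $\phi$ is continuous.

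No step seems genuinely hard. The only points needing care are the handling of the adjoined unit in $S^1$ when $S$ is not a monoid (this is why those factors are omitted rather than mapped) and the routine continuity of polynomials in a topological semigroup.
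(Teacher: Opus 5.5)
Your proof is correct. It pushes the polynomial through the homomorphism via $(x)\psi\phi=(x)\phi\psi_T$, pulls $W$ back along the continuous map $\psi_T$, and then combines continuity of $\phi{\restriction}_A$ at $a$ with condition (2) of property $\mathbb W$. The paper gives no proof of this lemma, only the citation \cite[Lemma 7.6]{LunaPhD}, so I cannot compare against an in-paper argument; yours is the natural direct one, and your handling of the adjoined unit is right.
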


The following three interconnected theorems are useful to find maximal Polish semigroup topologies on a given semigroup.

%Note that if a topological semigroup $S$ has property $\mathbb{XX}$ with respect to $A\subseteq S$, then $S$ has property $\mathbb{W}$ with respect to $A$. %So, Lemma~\ref{lem:bigXX}(2) implies the following.

\begin{theorem}\label{cor:XX_sub}
Let \(\Tau_1\) and \(\Tau_2\) be semigroup topologies on a semigroup \(S\) such that 
\begin{itemize}
    \item \(\Tau_1\) is Polish;
    \item \(\Tau_2\) is regular and second-countable;
    \item \(\mathcal{B}(\Tau_2)\subseteq \mathcal{B}(\Tau_1)\);
    \item \((S, \Tau_1)\) has property $\mathbb{XX}$ with respect to a subset \(A\) of \(S\).
\end{itemize}
 Then \(\Tau_2\subseteq\Tau_1\).
\end{theorem}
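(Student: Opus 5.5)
The plan is to reduce to \cref{lem:bigXX}(2) and then upgrade continuity at points of \(A\) to global continuity via property \(\mathbb W\) and \cref{propertyW}.

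\textbf{Step 1: extracting the maps \(t_{a,b}\).} For \(a,b\in A\), apply property \(\mathbb{XX}\) to the pair \((a,b)\in A\times S\). This gives a polynomial map \((x)t_{a,b}=b_0xb_1\cdots xb_n\) with the following properties: it is \(\Tau_1\)-continuous, its restriction \(t_{a,b}{\restriction}_A\) is open from \((A,\Tau_1|_A)\) to \((S,\Tau_1)\), and \((a)t_{a,b}=b\). Since \(\Tau_2\) is a semigroup topology, multiplication is jointly continuous in \(\Tau_2\). Any map \(x\mapsto b_0xb_1\cdots xb_n\) is built from the diagonal map, translations and multiplication, so it is \(\Tau_2\)-continuous as well.

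\textbf{Step 2: continuity on \(A\).} All hypotheses of \cref{lem:bigXX} now hold: \(\Tau_1\) is Polish, \(\Tau_2\) is second-countable, and \(A\) is dense \(G_\delta\) by condition (1) of \cref{XX}. Since \(\Tau_2\) is regular and \(\mathcal B(\Tau_2)\subseteq\mathcal B(\Tau_1)\), part (2) of \cref{lem:bigXX} gives that \(\id:(S,\Tau_1)\to(S,\Tau_2)\) is continuous at every point of \(A\). In particular the restriction \(\id{\restriction}_A:(A,\Tau_1|_A)\to(S,\Tau_2)\) is continuous.

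\textbf{Step 3: global continuity.} As noted before \cref{propertyW}, property \(\mathbb{XX}\) implies property \(\mathbb W\) for topological semigroups. To check this, fix \(s\in S\), pick any \(a\in A\) (possible since \(A\) is dense, hence nonempty when \(S\neq\varnothing\)), and let \(\phi=t_{a,s}\). If \(a\in U\in\Tau_1|_A\), then \((U)\phi\) is \(\Tau_1\)-open by openness of the restriction and contains \(s\), so \(s\) lies in its interior. Now apply \cref{propertyW} to the identity homomorphism \(\id:(S,\Tau_1)\to(S,\Tau_2)\), whose restriction to \(A\) is continuous by Step 2. This shows \(\id\) is continuous, i.e.\ \(\Tau_2\subseteq\Tau_1\).

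The substantive work is already contained in \cref{lem:bigXX}; the only points needing care are Step 1's verification that polynomial maps are \(\Tau_2\)-continuous and Step 3's passage from continuity at points of \(A\) to continuity everywhere. If one prefers to avoid \cref{propertyW}, Step 3 can be done directly. Given \(q\in S\) and a \(\Tau_2\)-open \(U\ni q\), pick \(p\in A\) and set \(W=(U)t_{p,q}^{-1}\), which is a \(\Tau_2\)-open neighbourhood of \(p\). By Step 2, \(W\cap A\) is a \(\Tau_1|_A\)-neighbourhood of \(p\). By openness of \(t_{p,q}{\restriction}_A\), its image is then a \(\Tau_1\)-neighbourhood of \(q\) contained in \(U\), which gives the same conclusion.
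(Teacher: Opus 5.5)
Your proof is correct and follows the paper's argument: the paper also notes that the polynomial maps $t_{a,s}$ are continuous in both semigroup topologies, invokes \cref{lem:bigXX}(2) to get continuity of the identity at points of $A$, and then obtains global continuity from property $\mathbb W$ via \cref{propertyW}. Your optional direct version of Step 3 is also valid; it is the translation trick from the last paragraph of the proof of \cref{lem:bigXX}(2), applied with $p\in A$ and $q\in S$.
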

\begin{proof}
Since $S$ is a topological semigroup, each of the maps \(t_{a,s}\) from  property $\mathbb{XX}$ is continuous.
Taking into account that $(S,\Tau_1)$ has property $\mathbb{XX}$, Lemma~\ref{lem:bigXX}(2) implies that the identity map \(\id:(S, \Tau_1)\to (S, \Tau_2)\) is continuous at every point in \(A\).
As property $\mathbb{XX}$ implies property $\mathbb{W}$, it follows from \cref{propertyW} that \(\id:(S, \Tau_1)\to (S, \Tau_2)\) is continuous. Hence \(\Tau_2\subseteq\Tau_1\).
\end{proof}

\begin{theorem}\label{borelcontinuous-XX}
    Let \(S\) and \(T\) be topological semigroups such that
    \begin{itemize}
        \item \(S\) is Polish;
        \item \(T\) is regular and second-countable;
        \item  \(S\) has property $\mathbb{XX}$ with respect to some \(A\subseteq S\).
   \end{itemize}
    Then every Borel measurable homomorphism from \(S\) to \(T\) is continuous. 
\end{theorem}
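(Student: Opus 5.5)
The plan is to reduce the statement to \cref{cor:XX_sub} by pulling the topology of \(T\) back to \(S\). Let \(\phi:S\to T\) be a Borel measurable homomorphism, let \(\Tau_1\) be the Polish semigroup topology of \(S\), and let \(\Tau_T\) be the topology of \(T\). Define \(\Tau_2:=(\Tau_T)\phi^{-1}=\makeset{(U)\phi^{-1}}{\(U\in\Tau_T\)}\) on \(S\). Continuity of \(\phi\) with respect to \(\Tau_1\) is exactly the inclusion \(\Tau_2\subseteq\Tau_1\). So it suffices to verify the hypotheses of \cref{cor:XX_sub} for the pair \(\Tau_1,\Tau_2\).

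The checks go as follows.
\begin{enumerate}
\item \(\Tau_2\) is regular by \cref{regfromreg}(2), since \(\Tau_T\) is regular. This is precisely why the paper allows non-Hausdorff regular spaces: \(\Tau_2\) need not be Hausdorff when \(\phi\) is not injective.
\item \(\Tau_2\) is second-countable, because preimages of a countable basis of \(\Tau_T\) form a basis of \(\Tau_2\). The sets \((U)\phi^{-1}\) are closed under finite intersections and arbitrary unions, so \(\Tau_2\) is indeed a topology.
\item \(\mathcal B(\Tau_2)\subseteq\mathcal B(\Tau_1)\). Every \(\Tau_2\)-open set is \((U)\phi^{-1}\) for some open \(U\), and this is \(\Tau_1\)-Borel since \(\phi\) is Borel measurable. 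The \(\sigma\)-algebra generated by \(\Tau_2\) is therefore contained in \(\mathcal B(\Tau_1)\).
\item \((S,\Tau_1)\) has property \(\mathbb{XX}\) by hypothesis.
\end{enumerate}

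The one remaining hypothesis is that \(\Tau_2\) is a semigroup topology, i.e.\ that multiplication \(S\times S\to S\) is continuous for \(\Tau_2\). Take \(x,y\in S\) and a \(\Tau_2\)-open set \((W)\phi^{-1}\ni xy\), so that \((x)\phi(y)\phi\in W\). Continuity of multiplication in \(T\) gives open sets \(U\ni(x)\phi\) and \(V\ni(y)\phi\) with \(UV\subseteq W\). Then \((U)\phi^{-1}\cdot(V)\phi^{-1}\subseteq(UV)\phi^{-1}\subseteq(W)\phi^{-1}\), using that \(\phi\) is a homomorphism.

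Applying \cref{cor:XX_sub} then yields \(\Tau_2\subseteq\Tau_1\), so \(\phi\) is continuous. I expect no real obstacle: the substantive work is already packaged in \cref{lem:bigXX} and \cref{cor:XX_sub}. The only points needing care are the semigroup-topology verification for the pulled-back topology and the observation that regularity is preserved without requiring the Hausdorff property.
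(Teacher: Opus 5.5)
Your proposal is correct and follows the paper's proof: pull back the topology of \(T\) along \(\phi\) to get a topology on \(S\) that is regular (by \cref{regfromreg}(2)), second-countable, a semigroup topology, and whose Borel sets lie in \(\mathcal B(\Tau_1)\), then apply \cref{cor:XX_sub} to obtain \(\Tau_2\subseteq\Tau_1\). You also write out the routine checks that the paper leaves implicit: second countability, the Borel inclusion, and continuity of multiplication for the pulled-back topology.
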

\begin{proof}
Let \(\phi:S\to T\) be a Borel measurable homomorphism.  Let \(\Tau_1\) be the topology on \(S\) and \(\Tau_2\) be the topology \(\makeset{(U)\phi^{-1}}{\(U\subseteq T\) is open}\). The topology \(\Tau_2\) is a second-countable semigroup topology on \(S\) and is also regular by \cref{regfromreg}. It follows from \cref{cor:XX_sub} that \(\Tau_2\subseteq \Tau_1\).
    Thus \(\phi\) is continuous.
\end{proof}

\begin{theorem}\label{cor:XX_main}
If \((S,\Tau)\) is a Polish topological semigroup that has property $\mathbb{XX}$ with respect to a subset \(A\) of \(S\), then $\Tau$ is a maximal Polish semigroup topology on $S$.
\end{theorem}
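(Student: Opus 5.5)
To show that $\Tau$ is maximal among Polish semigroup topologies, I would take an arbitrary Polish semigroup topology $\Tau'$ on $S$ with $\Tau\subseteq\Tau'$ and prove $\Tau'\subseteq\Tau$, so that $\Tau'=\Tau$. The tool is \cref{cor:XX_sub}, applied with $\Tau_1=\Tau$ and $\Tau_2=\Tau'$.

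The hypotheses of \cref{cor:XX_sub} are checked one at a time.
\begin{itemize}
\item $\Tau_1=\Tau$ is Polish, and $(S,\Tau)$ has property $\mathbb{XX}$ with respect to $A$. Both are given in the statement.
\item $\Tau_2=\Tau'$ is Polish, so it is metrizable and therefore regular, and it is second-countable.
\item Both $\Tau$ and $\Tau'$ are semigroup topologies by assumption.
\item The Borel condition: $\Tau\subseteq\Tau'\subseteq\mathcal B(\Tau')$, and both topologies are Polish. The second part of \cref{prop-borel-measurable}, with $\tau_1=\Tau$ and $\tau_2=\Tau'$, then gives $\mathcal B(\Tau)=\mathcal B(\Tau')$. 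In particular $\mathcal B(\Tau')\subseteq\mathcal B(\Tau)$.
\end{itemize}

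Since all hypotheses hold, \cref{cor:XX_sub} yields $\Tau'\subseteq\Tau$. Combined with $\Tau\subseteq\Tau'$ this gives $\Tau'=\Tau$, which is exactly maximality.

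There is no real obstacle here. The only point needing care is the equality of Borel $\sigma$-algebras, and that comes directly from \cref{prop-borel-measurable} because both topologies are Polish and comparable.
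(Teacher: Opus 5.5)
Your proof is correct and is essentially the paper's argument. The paper applies \cref{borelcontinuous-XX} to the identity homomorphism $(S,\Tau)\to(S,\Tau')$, which is Borel measurable by \cref{prop-borel-measurable}; that theorem is itself just \cref{cor:XX_sub} with $\Tau_2$ the pulled-back topology, here $\Tau'$ itself, so you have only skipped one layer of indirection.
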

\begin{proof}
    This follows from \cref{borelcontinuous-XX} and \cref{prop-borel-measurable}.
\end{proof}

\section{$\E(\N,<)$}
%The Zariski topology on \(\operatorname{End}(\N, <)\) can be found using standard techniques.

The goal of this section is proving each item of
\cref{thm:endNl}.
The following useful result was proven in~\cite[Lemma 5.3]{main}.
\begin{lemma}\label{lem-zariski-pointwise}
  Let \(X\) be an infinite set and let \(S\) be a subsemigroup of \(X^X\) such
  that for every \(x \in X\) there exist \( a, b, c_{0}, \ldots, c_{n-1} \in S \)
  for some $n\in \N$ such that the following hold:
  \begin{enumerate}
    \item\label{lem-zariski-pointwise-i} 
    \((y)a = (y)b\) if and only if \(y \neq x\);

    \item\label{lem-zariski-pointwise-ii} 
          \(x \in \im(c_{i})\) for all \(i\);

    \item\label{lem-zariski-pointwise-iii} 
          for every \(s \in S\) and every \(y \in X\setminus \{(x)s\}\) there
          is \(i \in \{0, \ldots, n-1\}\) so that \( \im(c_{i}) \cap (y)s^{-1} =
          \varnothing\).
  \end{enumerate}
  Then the semigroup Zariski topology of \(S\) is the pointwise topology.
\end{lemma}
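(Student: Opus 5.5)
The plan is to prove the two inclusions between the semigroup Zariski topology \(\mathfrak Z(S)\) and the pointwise topology separately.

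\emph{Zariski is contained in pointwise.} The pointwise topology on \(S\) is the subspace topology inherited from the product topology on \(X^X\) with \(X\) discrete. Composition is continuous there, since \((x)fg\) depends only on \((x)f\) and on one value of \(g\). The topology is also Hausdorff. In any Hausdorff semigroup topology, every map \(x\mapsto a_0xa_1\cdots xa_n\) is continuous. Hence the diagonal \(\set{(u,v)}{u=v}\) is closed in \(S\times S\), and each set \(\set{x\in S}{a_0xa_1\cdots xa_n\neq b_0xb_1\cdots xb_m}\) is open. So every subbasic Zariski open set is pointwise open.

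\emph{Pointwise is contained in Zariski.} It suffices to show that each subbasic set \(U_{x,y}=\set{s\in S}{(x)s=y}\) is Zariski open. I will write it as a finite intersection of Zariski subbasic sets. Apply the hypothesis at the point \(y\) to get \(a=a_y,\ b=b_y\) satisfying (1): \((z)a=(z)b\) if and only if \(z\neq y\). Apply the hypothesis at the point \(x\) to get \(c_0,\ldots,c_{n-1}\) satisfying (2) and (3). Since functions are composed left to right, for any \(t\in S\) we have \(ta\neq tb\) if and only if some \(w\) satisfies \((w)t=y\), that is, if and only if \(y\in\im(t)\). Taking \(t=c_is\), this gives
\[
\set{s\in S}{c_isa\neq c_isb}=\set{s\in S}{y\in \im(c_is)} .
\]

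The key step is then the claim
\[
U_{x,y}=\bigcap_{i<n}\set{s\in S}{c_isa\neq c_isb},
\]
whose right-hand side is a finite intersection of Zariski subbasic open sets.
\begin{itemize}
\item If \((x)s=y\), then by (2) we have \(x\in\im(c_i)\) for every \(i\), so \(y=(x)s\in\im(c_is)\) for every \(i\). Thus \(s\) lies in the intersection.
\item If \((x)s\neq y\), then (3) gives some \(i\) with \(\im(c_i)\cap (y)s^{-1}=\varnothing\). This means \(y\notin\im(c_is)\), so \(s\) is not in the intersection.
\end{itemize}

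The only delicate point I expect is bookkeeping rather than substance. One must apply condition (1) at the target point \(y\) but conditions (2) and (3) at the source point \(x\), and keep the left-to-right composition convention straight. It is also worth checking the edge case \(n=0\): condition (3) then forces that no \(y\neq(x)s\) exists, which cannot happen since \(X\) is infinite, so \(n\geq 1\) automatically.
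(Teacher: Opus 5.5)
Your proof is correct and complete. Taking \(a,b\) at the point \(y\) and the \(c_i\) at the point \(x\) gives \(U_{x,y}=\bigcap_{i<n}\{s\in S: c_isa\neq c_isb\}\); since all coefficients lie in \(S\) rather than \(S^1\), these are genuine subbasic sets for \(\mathfrak{Z}(S)\) under the paper's definition. The paper does not reprove this lemma but quotes it from \cite{main} (Lemma 5.3), and your argument, together with the standard fact that \(\mathfrak{Z}(S)\) is contained in every Hausdorff semigroup topology, is the natural one that the three hypotheses are designed for.
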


\begin{lemma}\label{lem:zariski_pointwise}
    The semigroup Zariski topology on \(\operatorname{End}(\N, <)\) is the pointwise topology.
\end{lemma}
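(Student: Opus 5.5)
We apply \cref{lem-zariski-pointwise} with \(X=\N\) and \(S=\End\), the monoid of strictly increasing maps \(\N\to\N\). The proof then reduces to exhibiting, for each fixed \(x\in\N\), elements \(a,b\) and \(c_0,c_1\) of \(\End\) satisfying conditions (1)--(3) of that lemma. The only structural facts needed are these: every element of \(\End\) is injective, and every infinite subset \(I\subseteq\N\) is the image of a unique element of \(\End\), namely its increasing enumeration.

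For condition (1), let \(a\) be the map \((y)a=2y\). Let \(b\) be defined by \((y)b=2y\) for \(y\neq x\) and \((x)b=2x+1\). Then \(b\) is strictly increasing, because
\[
(x-1)b=2x-2<2x+1<2x+2=(x+1)b
\]
(the left inequality only when \(x>0\)). Clearly \((y)a=(y)b\) if and only if \(y\neq x\).

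For conditions (2) and (3), take \(n=2\). Let \(c_0\) and \(c_1\) be the increasing enumerations of the infinite sets
\[
\{x\}\cup\{m>x: m \text{ even}\}\quad\text{and}\quad \{x\}\cup\{m>x: m \text{ odd}\}
\]
respectively. Both sets contain \(x\), so \(x\in\im(c_0)\cap\im(c_1)\), which gives (2). For (3), let \(s\in\End\) and \(y\neq (x)s\). Since \(s\) is injective, \((y)s^{-1}\) is either empty or a single point \(z\). In the second case \(z\neq x\), since otherwise \(y=(x)s\). There are two possibilities for \(z\):
\begin{itemize}
\item If \(z<x\), then \(z\) lies in neither image.
\item If \(z>x\), then \(z\) lies in exactly one of \(\im(c_0)\) and \(\im(c_1)\), according to its parity, and so it is missing from the other.
\end{itemize}
In every case some \(c_i\) satisfies \(\im(c_i)\cap (y)s^{-1}=\varnothing\), which is (3).

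\cref{lem-zariski-pointwise} now shows that the semigroup Zariski topology on \(\End\) is the pointwise topology. None of these steps is difficult. The only point requiring care is checking that each chosen map is genuinely strictly increasing, which is why \(c_0\) and \(c_1\) are specified through their (infinite) images rather than by explicit formulas.
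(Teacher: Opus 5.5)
Your proposal is correct and follows essentially the same route as the paper: you apply \cref{lem-zariski-pointwise} with $a,b$ differing only at $x$, and with $c_0,c_1$ whose images meet exactly in $\{x\}$, and then injectivity of $s$ gives condition (3). The only differences are your explicit witnesses (doubling for $a,b$, and parity classes above $x$ for $c_0,c_1$, in place of the paper's $y\mapsto 2y+x$ and $y\mapsto 2y+1+x$), and these are cosmetic.
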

\begin{proof}
    We need only find for all \(x\in \N\), some \(a, b, c_0, c_1\in \operatorname{End}(\N, <)\) as in Lemma~\ref{lem-zariski-pointwise}. Fix $x\in \N$ and let \(a, b, c_0,c_1:\N\to \N\) be defined by
     \[(y)a=\begin{cases}
         y & y\leq x;\\
         y+2 & y>x.
     \end{cases}\quad\quad\quad(y)b=\begin{cases}
         y & y< x;\\
         x+1 & y=x;\\
         y+2 & y>x.
     \end{cases}\]

       %  Let \(c_0, c_1:\N\to \N\) be defined by
     \[(y)c_0=2y+x.\quad\quad\quad\quad(y)c_1=\begin{cases}
            2y+1+x & y\neq 0;\\
            x & y=0.
        \end{cases}\]
We check that the conditions from Lemma~\ref{lem-zariski-pointwise} are satisfied. For condition (1), we have \((x)a=x\neq x+1=(x)b\) and for \(y\neq x\), we have \((y)a=(y)b\).
For condition (2), we have \((0)c_0=(0)c_1=x\).
For condition (3), fix any \(s\in \operatorname{End}(\N,<)\). Note that \(s\) is injective. If \(y\in \N\backslash \{(x)s\}\) then either \(y\notin \im(s)\) or \((y)s^{-1}=((z)s)s^{-1}=z \) for some \(z\neq x\). Thus, as \(\im(c_0)\cap\im(c_1)=\{x\}\), either \((y)s^{-1}\cap\im(c_0)=\varnothing\) or \((y)s^{-1}\cap \im(c_1)=\varnothing\).     
\end{proof}

We now prove two useful lemmas for defining large families of topologies on $\operatorname{End}(\N, <)$ in order to establish the remaining parts of \cref{thm:endNl}.

\begin{definition}
    An ideal $J$ of a semigroup $S$ is called {\em prime} if $S\setminus J$ is a subsemigroup of \(S\). 
\end{definition}

\begin{lemma}\label{sem}
Let $(S,\tau)$ be a topological semigroup and $J\subseteq S$ be a prime ideal.  Let $\tau(J)$ be the topology on $S$ generated by the subbase $\tau\cup\{J\}$. Then $(S,\tau(J))$ is a topological semigroup.   
\end{lemma}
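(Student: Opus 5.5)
We need to show that multiplication $\mu:(S,\tau(J))\times(S,\tau(J))\to(S,\tau(J))$ is continuous. Since $\tau\cup\{J\}$ is a subbase for $\tau(J)$, it suffices to check that the preimage under $\mu$ of each subbasic open set is open in the product topology $\tau(J)\times\tau(J)$. For $U\in\tau$ this is immediate: $\tau\subseteq\tau(J)$ and multiplication is $\tau$-continuous, so $(U)\mu^{-1}$ is open in $\tau\times\tau\subseteq\tau(J)\times\tau(J)$. Hence the whole argument reduces to the single subbasic set $J$.

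The set to examine is
\[(J)\mu^{-1}=\{(x,y)\in S\times S: xy\in J\}.\]
We claim that
\[(J)\mu^{-1}=(J\times S)\cup(S\times J).\]
The inclusion from right to left holds because $J$ is an ideal: if $x\in J$ or $y\in J$, then $xy\in SJ\cup JS\subseteq J$ (using the two-sided ideal property, and noting that $x\in J$ gives $xy\in JS\subseteq J$). For the reverse inclusion, primeness is used: if $x\notin J$ and $y\notin J$, then $x,y\in S\setminus J$, which is a subsemigroup, so $xy\in S\setminus J$, i.e.\ $xy\notin J$. Since $J\in\tau(J)$, both $J\times S$ and $S\times J$ are open in $\tau(J)\times\tau(J)$, and so is their union.

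It follows that $\mu$ is continuous with respect to $\tau(J)$, so $(S,\tau(J))$ is a topological semigroup. The only substantive step is the set identity above, and it uses exactly the two hypotheses, that $J$ is an ideal and that $S\setminus J$ is a subsemigroup. Everything else is the routine observation that continuity can be checked on a subbase. The argument is valid even if $J$ is empty or equal to $S$, since the identity still holds in those cases.
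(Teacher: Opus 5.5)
Your proof is correct. It follows the same underlying idea as the paper but unpacks it, whereas the paper factors through a homomorphism.

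The paper observes that primeness makes the indicator map $\phi\colon S\to\{0,1\}$, given by $(s)\phi=0\iff s\in J$, a homomorphism into the multiplicative semilattice $\{0,1\}$. It notes that the Sierpi\'nski topology $\{\varnothing,\{0\},\{0,1\}\}$ is a semigroup topology on $\{0,1\}$. It then uses two standard facts: the pullback $\{\varnothing,J,S\}$ of a semigroup topology along a homomorphism is a semigroup topology, and the topology generated by two semigroup topologies is again one. The join of $\tau$ and this pullback is exactly $\tau(J)$.

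Your identity $(J)\mu^{-1}=(J\times S)\cup(S\times J)$ is precisely the statement that $\phi$ is a homomorphism and that $\{0\}$ has open preimage under multiplication on $\{0,1\}$. So the two arguments rest on the same fact. Yours is self-contained and does not rely on the general facts about pullbacks and joins of semigroup topologies, which the paper uses without proof. The paper's packaging has the advantage of being a template it reuses: in Lemma~\ref{Pol2} it pulls back the discrete topology on a Rees quotient, and in Lemma~\ref{*} it uses a homomorphism onto $\{\mathrm{true},\mathrm{false}\}^2$.
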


\begin{proof}
Consider the subsemigroup \(B:=\{0, 1\}\) of the multiplicative semigroup of the integers. It is clear that \(\Tau_B:=\{\varnothing, \{0\}, \{0,1\}\}\) is a semigroup topology on $B$. 
As \(J\) is a prime ideal, the map \(\phi:S\to B\) defined by \((s)\phi=0\iff s\in J\) is a semigroup homomorphism.
Since the topology \(\tau(J)\) is generated by the semigroup topologies \(\tau\) and \((\Tau_B)\phi^{-1}\) on \(S\), it is also a semigroup topology.
\end{proof}

\begin{lemma}\label{Pol2}
Let $J$ be an ideal on a Polish topological semigroup $(S,\tau)$ such that the set $S\setminus J$ is countable. Then the topology $\tau^*(J)$ generated by $\tau\cup\{\{x\}:x\in X\setminus J\}\cup\{J\}$ is a Polish semigroup topology on $S$.   
\end{lemma}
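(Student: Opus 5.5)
Write $\tau^*:=\tau^*(J)$ and read $X$ in the statement as $S$. I will prove the two assertions, Polishness and joint continuity of multiplication, separately.

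\emph{Polishness.} A countable set is $F_\sigma$ in the Polish space $(S,\tau)$, since singletons are closed. Hence $S\setminus J$ is $F_\sigma$ and $J$ is $G_\delta$, so $(J,\tau|_J)$ is Polish. Now $J$ is $\tau^*$-open, being a generator, and each point of $S\setminus J$ is $\tau^*$-isolated. So $J$ is also $\tau^*$-closed, and $(S,\tau^*)$ is the topological sum of $(J,\tau^*|_J)$ and the countable discrete space $S\setminus J$. On $J$ the extra generators only contribute the set $J$ itself and the empty set, so $\tau^*|_J=\tau|_J$. A topological sum of a Polish space and a countable discrete space is Polish. (Alternatively, \cref{PolishBuilding} handles the countably many closed sets $\{x\}$ and $S\setminus J$; the extra open set $J$ then comes from the clopen splitting.)

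\emph{Joint continuity.} Fix $s,t\in S$ and a $\tau^*$-neighbourhood $W$ of $st$. I split into cases according to whether $st\in J$.

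\textbf{Case $st\in J$.} Then $W$ contains $U\cap J$ for some $U\in\tau$ containing $st$. Pick $\tau$-open $V_1\ni s$ and $V_2\ni t$ with $V_1V_2\subseteq U$; these are also $\tau^*$-open.
\begin{itemize}
\item If $s\in J$, take $V_1\cap J$ and $V_2$. Since $J$ is an ideal, every product of points from these sets lies in $U\cap J$.
\item If $t\in J$, argue symmetrically, taking $V_1$ and $V_2\cap J$.
\item If $s,t\notin J$, take the singletons $\{s\}$ and $\{t\}$. These are $\tau^*$-open, and their only product $st$ lies in $W$.
\end{itemize}

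\textbf{Case $st\notin J$.} Then $\{st\}$ is $\tau^*$-open, so it suffices to find $\tau^*$-neighbourhoods of $s$ and $t$ whose product is exactly $\{st\}$. Since $J$ is an ideal and $st\notin J$, neither $s$ nor $t$ lies in $J$. Hence $\{s\}$ and $\{t\}$ are $\tau^*$-open, and their product is $\{st\}$.

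Neither case presents a real obstacle. The only point needing care is that $\tau^*$ is a semigroup topology, and the case analysis above settles this by using that $J$ is an ideal and that points off $J$ are isolated.
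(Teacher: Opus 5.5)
Your proof is correct, but it takes a different route from the paper on both halves.

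For joint continuity, the paper does no case analysis. It observes that $\tau^*(J)$ is the join of $\tau$ with the pullback of the discrete topology along the Rees quotient homomorphism $S\to S/J$, whose fibres are exactly $J$ and the singletons off $J$. It then uses two facts: a pullback of a semigroup topology along a homomorphism is a semigroup topology, and a join of semigroup topologies is again one. This is the same device as in \cref{sem} and \cref{*}. Your case split is essentially that argument unpacked by hand; it is longer but needs neither general fact.

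For Polishness, the paper applies \cref{PolishBuilding} twice. It first adjoins the countably many $\tau$-closed singletons, after which $J$ is closed, and then adjoins $J$. That is the alternative you mention in passing. Your main argument instead uses that $J$ is $G_\delta$ and splits $(S,\tau^*)$ as a topological sum. This costs an appeal to the fact that $G_\delta$ subspaces of Polish spaces are Polish. In return it gives an explicit description of $\tau^*(J)$, in particular the identity $\tau^*(J)|_J=\tau|_J$, which the paper later uses without comment in the proof of \cref{continuun many things}.
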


\begin{proof}
Let us show that $\tau^*(J)$ is a semigroup topology on $S$. 
Let \(Q\) be the Rees quotient of \(S\) by \(J\) equipped with the discrete topology \(\Tau_Q\) and let \(\phi:S\to Q\) be the quotient map. 
The topology $\tau^*(J)$ is generated by the semigroup topologies \(\tau\) and \((\Tau_Q)\phi^{-1}\) on \(S\) and is thus a semigroup topology.
 
\cref{PolishBuilding} implies that the topology  $\tau_J$ generated by $\tau$ and all the singletons $f\in S\setminus J$ is a Polish topology. Note that $J$ is closed in $(S,\tau_J)$.  Hence applying \cref{PolishBuilding} again we get that the topology $\tau^*(J)$ is Polish.
\end{proof}

For any functions $f,g\in \N^\N$ set
\begin{itemize}
   % \item $f<g$ if $(n)f<(n)g$ for all $n\in\N$;
    \item $f\leq g$ if $(n)f\leq (n)g$ for all $n\in\N$;
    \item $f\leq ^* g$ if the set $\{n\in\N: (n)f>(n)g\}$ is finite.
   % \item $f>g$ if $(n)f>(n)g$ for all $n\in\N$;
  % \item $f\geq g$ if $(n)f\geq (n)g$ for all $n\in\N$;
   %\item $f\geq ^* g$ if the set $\{n\in\N: (n)f<(n)g\}$ is finite.
\end{itemize}

For an element $f\in \End$ set
\begin{itemize}
\item ${\downarrow}f=\{g\in \End: g\leq f\}$;
\item ${\downarrow}^*f=\{g\in \End: g\leq^* f\}$.
\end{itemize}
For a subset $A\subseteq \N^\N$ let ${\downarrow}^*A=\bigcup_{a\in A}{\downarrow}^*a$.
% For any subset $T$ of a subsemigroup $S\subseteq \N^\N$ let 
% $${\downarrow}^{<}T=\bigcup_{x\in T}{\downarrow}^{<}x,
% \qquad {\downarrow}^{\leq}T=\bigcup_{x\in T}{\downarrow}^{\leq}x, \qquad {\downarrow}^*T=\bigcup_{x\in T}{\downarrow}^*x,$$
% $${\uparrow}^{<}T=\bigcup_{x\in T}{\uparrow}^{<}x,
% \qquad {\uparrow}^{\leq}T=\bigcup_{x\in T}{\uparrow}^{\leq}x, \qquad {\uparrow}^*T=\bigcup_{x\in T}{\uparrow}^*x.$$

 A family $\Psi\subseteq \N^\N$ is called {\em bounded} if there exists $f\in\N^\N$ such that $g\leq^* f$ for any $g\in \Psi$. A family $\Psi\subseteq \N^\N$ is called {\em unbounded} if $\Psi$ is not bounded. Let
 $\mathfrak b$ be the minimal cardinality of an unbounded family in $(\N^\N,\leq^*)$.
 It is well-known that $\aleph_1\leq \mathfrak b\leq \mathfrak c=2^{\aleph_0}$. 

%We now show part (2) of \cref{thm:endNl}.
\begin{lemma}\label{prop:big_ideal_chain}
There exists a family $\{J_{\alpha}:\alpha\in\mathfrak b\}$ of prime ideals in $\E(\N,<)$ such that \(J_\alpha \subsetneqq J_\beta\) whenever \(\alpha>\beta\).  
\end{lemma}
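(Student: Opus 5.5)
We will realise each $J_\alpha$ as the complement of a ``filter'' $F_\alpha$ of slowly growing maps. The point is that $S\setminus J$ must be a subsemigroup, and for $J$ to be an ideal, $xy\in S\setminus J$ must force $x,y\in S\setminus J$. Strictly increasing maps make this easy via domination. For $x,y\in\End$ we have $(n)x\geq n$ for all $n$. Hence $(n)xy=((n)x)y\geq (n)y$, since $y$ is increasing, and $(n)xy\geq (n)x$, since $(m)y\geq m$. So $xy\geq x$ and $xy\geq y$ pointwise.

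\emph{Step 1 (filters from composition-closed families).} Let $\Phi\subseteq\End$ contain $\id$ and be closed under composition. Put $F(\Phi)={\downarrow}^*\Phi$ and $J(\Phi)=\End\setminus F(\Phi)$. We check that $F(\Phi)$ is a subsemigroup. Suppose $g_1\leq^* h_1$ and $g_2\leq^* h_2$ with $h_i\in\Phi$. Since $(n)g_1\to\infty$, eventually $(n)g_1g_2\leq ((n)g_1)h_2$. Since $h_2$ is increasing, this is $\leq ((n)h_1)h_2$ eventually. Thus $g_1g_2\leq^* h_1h_2\in\Phi$. We also check the filter property. If $xy\in F(\Phi)$, say $xy\leq^* h$, then $x,y\leq xy\leq^* h$ by the pointwise inequalities above, so $x,y\in F(\Phi)$. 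Contrapositively, $J(\Phi)$ is a two-sided ideal whose complement is a subsemigroup, i.e.\ a prime ideal. It is nonempty provided $\Phi$ is $\leq^*$-bounded: any $f$ eventually strictly above every member of $\Phi$ lies outside $F(\Phi)$, and such an $f\in\End$ exists once some bound exists, by replacing a bound $b$ with a strictly increasing map dominating $b+1$.

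\emph{Step 2 (transfinite recursion of length $\mathfrak b$).} We choose $\varphi_\alpha\in\End$ for $\alpha<\mathfrak b$ recursively. Let $\Phi_\alpha$ be the composition-closure of $\{\id\}\cup\{\varphi_\beta:\beta<\alpha\}$. Then $|\Phi_\alpha|\leq|\alpha|+\aleph_0<\mathfrak b$, so $\Phi_\alpha$ is bounded in $(\N^\N,\leq^*)$. Pick $\varphi_\alpha\in\End$ with $(n)\varphi_\alpha>(n)h$ for all but finitely many $n$, for every $h\in\Phi_\alpha$. Then $\varphi_\alpha\notin F(\Phi_\alpha)$ but $\varphi_\alpha\in F(\Phi_{\alpha+1})$. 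Setting $J_\alpha:=J(\Phi_\alpha)$, the families increase, so $\alpha<\beta$ gives $F(\Phi_\alpha)\subsetneqq F(\Phi_\beta)$. Equivalently $J_\beta\subsetneqq J_\alpha$, which is exactly the required chain. Each $J_\alpha$ is nonempty, since $\Phi_\alpha$ is bounded, and is prime by Step 1.

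The main obstacle is Step 1's closure under composition for $\leq^*$. Since $\leq^*$ only controls the maps eventually, we need that $(n)g_1\to\infty$ and that $h_2$ is monotone; both hold in $\End$. Everything else is the standard cardinality argument using the definition of $\mathfrak b$.
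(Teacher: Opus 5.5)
Your proof is correct and follows essentially the paper's route: each $J_\alpha$ is the complement of the $\leq^*$-downward closure of a composition-closed, $\leq^*$-bounded family, built by a transfinite recursion of length $\mathfrak b$ that uses the fact that families of size $<\mathfrak b$ are bounded. The paper uses the powers $\{f_\xi^n\}$ of a single element $f_\xi$ that dominates all earlier powers, whereas you use the semigroup generated by all earlier choices; this makes the nesting automatic and gives an explicit strictness witness $\varphi_\alpha$, which the paper leaves implicit.
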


\begin{proof}
Fix any $f_0\in \End$. Let 
$J_0=\End\setminus {\downarrow}^* \{f_0^{n}:n\in\N\setminus\{0\}\}$. %Observe that for each $h,g\in\End$ we have  $h\leq hg$ and $h\leq gh$. 
We claim that $J_0$ is a prime ideal. Firstly, \(J_0\) is an ideal as it is upwards closed. If \(g\leq^* f_0^n\) and \(h\leq^* f_0^m\), then \(gh\leq^* f_0^nf_0^m=f_0^{n+m}\). Thus \(\operatorname{End}(\N,<)\backslash J_0\) is a semigroup.

Assume that for some $\xi\in\mathfrak b$ we already constructed a decreasing chain $\{J_{\alpha}:\alpha\in\xi\}$ of prime ideals and a family $\{f_\alpha:\alpha\in\xi\}\subseteq \End$ such that $\End\setminus J_{\alpha}={\downarrow}^*\{f_\alpha^n:n\in\N\setminus\{0\}\}$. Since $\xi<\mathfrak b$ and $\mathfrak b$ is uncountable, we get that 
 $$|\{f^n_{\alpha}:n\in\N\setminus\{0\}, \alpha\in\xi\}|=\max\{|\xi|,\aleph_0\}<\mathfrak b.$$   Then there exists $g\in\N^\N$ such that $f_{\alpha}^n\leq^* g$ for each $n\in\N\setminus\{0\}$ and $\alpha\in\xi$. Define a function $f_{\xi}\in \End$ as follows: $(n)f_\xi=\sum_{i\leq n}(i)g+1$.  It is clear that 
 $\{f^n_{\alpha}:n\in\N\setminus\{0\}, \alpha\in\xi\}\subseteq {\downarrow}^*f_{\xi}$. Let $J_{\xi}=\End\setminus {\downarrow}^*\{f_{\xi}^n:n\in\N\setminus\{0\}\}$. It is easy to see that $J_{\xi}$ is a prime ideal, which is contained in $\bigcap_{\alpha\in\xi}J_{\alpha}$.
\end{proof}

\begin{proposition}\label{nosecondAC}
  There is no maximal second-countable semigroup topology on $\End$. 
\end{proposition}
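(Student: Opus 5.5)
Suppose, for a contradiction, that $\tau$ is a maximal second-countable semigroup topology on $\End$. The plan is to find a prime ideal $J$ that is not $\tau$-open and apply \cref{sem}: the topology $\tau(J)$ generated by $\tau\cup\{J\}$ is a semigroup topology. Since it is obtained by adding a single set to a countable base, it is still second-countable. It is strictly finer than $\tau$ because $J\notin\tau$, and this contradicts maximality.

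So the whole task reduces to showing that some prime ideal is not open in $\tau$. For this we use \cref{prop:big_ideal_chain}, which gives a strictly decreasing chain $\{J_\alpha:\alpha\in\mathfrak b\}$ of prime ideals of length $\mathfrak b\geq\aleph_1$. Suppose every $J_\alpha$ were $\tau$-open. Fix a countable base $\mathcal B$ of $\tau$. Each $J_\alpha$ is a union of members of $\mathcal B$, so $\alpha\mapsto\{B\in\mathcal B: B\subseteq J_\alpha\}$ sends the chain to subsets of $\mathcal B$. This map is strictly monotone, because the $J_\alpha$ are open and distinct: if $J_\beta\supsetneq J_\alpha$, pick $x\in J_\beta\setminus J_\alpha$ and a basic $B$ with $x\in B\subseteq J_\beta$; then $B\not\subseteq J_\alpha$.

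This yields a strictly decreasing $\omega_1$-sequence of subsets of the countable set $\mathcal B$. That is impossible, since at each step we could choose a new element of $\mathcal B$ lying in the difference, producing uncountably many distinct elements of $\mathcal B$. Hence some $J_\alpha$ is not $\tau$-open, and taking $J=J_\alpha$ completes the argument.

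The main subtlety is the standard fact that a second-countable space has no strictly monotone $\omega_1$-chain of open sets; the argument above handles it directly. We must also confirm that $\tau(J)$ is genuinely second-countable and a semigroup topology. The latter is exactly \cref{sem}. The former holds since $\mathcal B\cup\{J\}$ is a countable subbase, and finite intersections of a countable subbase give a countable base. No Hausdorff or Polish hypothesis is needed, because the statement concerns arbitrary second-countable semigroup topologies.
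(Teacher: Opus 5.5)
Your proof is correct and follows essentially the same route as the paper: by maximality and \cref{sem}, every prime ideal $J_\alpha$ from \cref{prop:big_ideal_chain} must already be $\tau$-open, and choosing a basic open set contained in $J_\alpha$ but not in $J_{\alpha+1}$ then injects an uncountable ordinal into a countable base. Your passage through the sets $\{B\in\mathcal B: B\subseteq J_\alpha\}$ is just a repackaging of the paper's direct choice of $U_\alpha\in\mathcal B$ with $f_\alpha\in U_\alpha\subseteq J_\alpha$ and $f_\alpha\notin J_{\alpha+1}$.
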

\begin{proof}
    Suppose for a contradiction that \(\Tau\) is a maximal second-countable semigroup topology on \(\End\).
    Let \(\{J_{\alpha}:\alpha\in\mathfrak b\}\) be the chain from Proposition~\ref{prop:big_ideal_chain} and note that \(J_{\alpha}\subseteq J_{\beta} \iff \beta \leq \alpha\). 
    Lemma~\ref{sem} implies that \(\Tau (J_\alpha)\) is a second-countable semigroup topology on \(\End\) for all \(\alpha \in \mathfrak b \). 
    As \(\Tau\) is maximal, it follows that \(\Tau (J_\alpha)=\Tau\). Thus for all \(\alpha\in \mathfrak{b}\) we have \(J_\alpha\in \Tau\).

    Let \(B\) be a countable basis for \(\Tau\). For every \(\alpha\in \mathfrak b\), fix any \(f_\alpha \in J_\alpha\setminus J_{\alpha+1}\). For all \(\alpha\in \mathfrak b\), there exists \(U_\alpha\in B\) such that \(f_\alpha\in U_\alpha\subseteq J_\alpha\in \Tau\). 
    Hence for all \(\alpha\in \mathfrak b\) we have \(U_\alpha\subseteq J_\alpha\) but \(U_\alpha\subsetneq J_{\alpha+1}\). It follows that the map from \(\mathfrak b\) to \(B\) defined by \(\alpha\mapsto U_\alpha\) is injective, a contradiction.
\end{proof}

\begin{corollary}
 The semigroup $\End$ admits no second-countable semigroup topology which has automatic continuity with respect to the class of second-countable topological semigroups. 
 % To Do: Check maybe it doesn't admit a second-countable topology such that each Borel measurable homomorphism from it to a second-countable Hausdorff topological semigroup is continuous. For this, it suffices to show that all $J_\alpha$ are Borel with respect to pointwise topology. Even weaker is enough; $J_{\alpha+1}$ must be Borel in $\tau_\alpha$. we can take $J_\alpha$ to be countable, and the length of induction may be $\w_1$. So, everything should work.
\end{corollary}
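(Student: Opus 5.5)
We argue by contradiction, reducing to \cref{nosecondAC}. Suppose $\Tau$ is a second-countable semigroup topology on $\End$ such that every homomorphism from $(\End,\Tau)$ to a second-countable topological semigroup is continuous. We will show that $\Tau$ would then be a maximal second-countable semigroup topology, which \cref{nosecondAC} rules out.

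Let $\Tau'$ be any second-countable semigroup topology on $\End$ with $\Tau\subseteq\Tau'$. Apply the automatic continuity hypothesis to the identity homomorphism $\id:(\End,\Tau)\to(\End,\Tau')$. The target is a second-countable topological semigroup, so $\id$ is continuous, and hence $\Tau'\subseteq\Tau$. Therefore $\Tau'=\Tau$, so $\Tau$ is maximal among second-countable semigroup topologies, contradicting \cref{nosecondAC}.

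It is cleaner to argue directly with the ideals. By \cref{prop:big_ideal_chain} and \cref{sem}, each $\Tau(J_\alpha)$ is a second-countable semigroup topology. Continuity of $\id:(\End,\Tau)\to(\End,\Tau(J_\alpha))$ gives $J_\alpha\in\Tau$ for every $\alpha\in\mathfrak b$. The counting argument from the proof of \cref{nosecondAC} then yields an injection from $\mathfrak b$ into a countable base, which is impossible since $\mathfrak b$ is uncountable.

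The only point needing care is that the class of target semigroups places no separation axiom beyond second-countability. The topologies $\Tau(J_\alpha)$ are finer than the given semigroup topology, so they inherit whatever separation properties $\Tau$ has; if $\Tau$ is Hausdorff, then so is each $\Tau(J_\alpha)$. Consequently they belong to the relevant class under any reasonable reading. I expect no real obstacle; the argument is a direct consequence of \cref{nosecondAC}.
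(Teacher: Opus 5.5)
Your proposal is correct and is essentially the intended derivation of this corollary from \cref{nosecondAC}: applying automatic continuity to the identity map into any finer second-countable semigroup topology shows that $\Tau$ would be maximal, which that proposition forbids. In particular, mapping into $\Tau(J_\alpha)$ forces $J_\alpha\in\Tau$ for every $\alpha$.

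One minor correction to your last paragraph. A finer topology inherits $T_0$, $T_1$ and Hausdorffness, but not regularity or metrizability. So under such stronger readings of the target class you would instead adjoin both $J_\alpha$ and its complement. This is still a semigroup topology because $J_\alpha$ is prime, and it is regular whenever $\Tau$ is, by \cref{regfromreg}. For the class as stated, no separation axiom is needed.
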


We finish the section by showing items (3) and (4) of \cref{thm:endNl}.

% \begin{lemma}
% Let $S$ be a cancellative  semigroup and a point $c\in S$ be isolated. Then for any $a,b\in S$ such that $ab=c$, the points $a$ and $b$ are isolated.    
% \end{lemma}

% \begin{proof}
% To derive a contradiction, assume that there exists $a,b\in S$ such that $a$ is not isolated and $c=ab$ is isolated. Since $S$ is a  semigroup, there exists an open neighborhood $U$ of $a$ such that $Ub\subseteq \{c\}$. Since $S$ is cancellative and $|U|\geq 2$, we get that $|Ub|\geq 2$, which contradicts the above inclusion.     
% \end{proof}

\begin{definition}
 We define 
 \[\operatorname{AS}(\N,<):=\makeset{f\in \operatorname{End}(\N, <)}{\(\N\backslash \im(f)\) is finite}. \]
 Note that \(\operatorname{AS}(\N,<)\) is a countable semigroup and \(\operatorname{End}(\N, <)\backslash \operatorname{AS}(\N,<)\) is an ideal of \(\operatorname{End}(\N, <)\).
 We refer to the elements of \(\operatorname{AS}(\N,<)\) as \emph{almost surjections}.
\end{definition}

In the following proof it is convenient for us to identify natural numbers with finite ordinals, i.e., $n=\{0,\ldots, n-1\}$ for all $n\in\N$.

\begin{theorem}\label{continuun many things}
    There are precisely \(2^{\aleph_0}\) Polish semigroup topologies on \(\End\) and at least \(2^{\aleph_0}\) second-countable metrizable semigroup topologies on \(\End\) which are not Polish.
\end{theorem}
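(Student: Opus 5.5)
The plan has three parts: an upper bound, a lower bound via \cref{Pol2}, and a separate construction of non-Polish metrizable topologies. Throughout, let \(\tau\) be the pointwise topology on \(S:=\End\). It is Polish, since \(S\) is a closed subspace of \(\N^\N\).

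\textbf{Upper bound.} By \cref{lem:zariski_pointwise}, every Hausdorff semigroup topology on \(S\) contains the Zariski topology, which equals \(\tau\). So every Polish semigroup topology \(\Tau\) satisfies \(\tau\subseteq\Tau\). By \cref{prop-borel-measurable}, \(\mathcal B(\Tau)=\mathcal B(\tau)\). Hence \(\Tau\) is generated by a countable basis consisting of \(\tau\)-Borel sets. There are only \(2^{\aleph_0}\) \(\tau\)-Borel sets, and therefore only \((2^{\aleph_0})^{\aleph_0}=2^{\aleph_0}\) countable families of them. This gives at most \(2^{\aleph_0}\) Polish semigroup topologies.

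\textbf{Lower bound via \cref{Pol2}.} For \(n\in\N\) let \(e_n\in\operatorname{AS}(\N,<)\) be the map fixing every \(x<n\) and sending \(x\geq n\) to \(x+1\), so \(\im(e_n)=\N\setminus\{n\}\). Set \(d(f)=|\N\setminus\im(f)|\) for \(f\in S\). Since
\[\N\setminus\im(fg)=(\N\setminus\im g)\cup(\N\setminus\im f)g\]
is a disjoint union and \(g\) is injective, \(d(fg)=d(f)+d(g)\), and \(d(f)=0\) iff \(f=\id\). For \(X\subseteq\N\) put
\[J_X:=S\setminus(\{\id\}\cup\{e_n:n\in X\}).\]
If \(ab\notin J_X\) then \(d(a)+d(b)\leq1\), so one of \(a,b\) is \(\id\) and the other equals \(ab\). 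Thus \(J_X\) is an ideal with countable complement, and \cref{Pol2} makes \(\tau^*(J_X)\) a Polish semigroup topology. To separate these topologies, I will show that \(\{e_n\}\) is open in \(\tau^*(J_X)\) iff \(n\in X\). If \(n\notin X\), then \(e_n\in J_X\), and its basic neighbourhoods have the form \(U\cap J_X\) with \(U\in\tau\). Any basic pointwise neighbourhood of \(e_n\) fixes only finitely many values, so it contains maps with infinite co-image, which lie in \(J_X\). Hence these neighbourhoods are infinite and \(e_n\) is not isolated. Therefore \(X\mapsto\tau^*(J_X)\) is injective, giving \(2^{\aleph_0}\) Polish semigroup topologies.

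\textbf{Non-Polish topologies, the main obstacle.} The topologies should be obtained as joins of \(\tau\) with a topology pulled back from a countable Rees quotient \(S/J_X\) (as in the proofs of \cref{sem,Pol2}), but one that is \emph{not} discrete. Joins with pulled-back semigroup topologies stay semigroup topologies, and second-countable, regular (by \cref{regfromreg}) and Hausdorff, hence metrizable. The difficulty is choosing the quotient topology so that Polishness fails, while still getting \(2^{\aleph_0}\) distinct topologies from varying \(X\).

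\emph{First attempt.} Give the quotient \(Q\) the topology whose nonempty open sets are those containing \(0\). This makes \(J_X\) open while the countable set \(F=S\setminus J_X\) keeps only its \(\tau\)-neighbourhoods. Multiplication is continuous because \(e_ne_m=0\). I would try to choose \(X\) so that \(F\) is not \(G_\delta\) in the resulting topology, which would contradict Polishness through \cref{comeager}(2) and a Baire-category argument. This attempt may fail: making \(J_X\) open amounts to making the \(\tau\)-\(F_\sigma\) set \(F\) closed, and the result may still be Polish.

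\emph{Fallback.} Replace \(J_X\) by ideals whose countable complement is \(\tau\)-dense in a nonempty \(\tau\)-open set. These could be built from larger families of almost surjections using the additivity of \(d\). Then \(F\) is countable but \(\tau\)-dense there, while \(J_X\) is open and must also be dense, and I would aim to derive a contradiction with the Baire property.
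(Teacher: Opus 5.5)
Your count of Polish topologies is fine. The upper bound is exactly the paper's Borel-counting argument. Your lower bound is a valid, slightly more elementary variant of the paper's: you use the co-countable ideals $J_X$ indexed by $X\subseteq\N$ and the additivity of $d$, where the paper uses the coideals $C_f=\operatorname{AS}(\N,<)\cap{\downarrow}f$ indexed by $f\in\End$. In both cases the isolated points separate the topologies.

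The genuine gap is the second assertion. You never produce a non-Polish metrizable semigroup topology, and both of your sketched routes fail as stated.

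\textbf{First attempt.} Your appeal to \cref{regfromreg} needs the topology on the quotient $Q$ to be regular. The topology you choose, whose nonempty open sets are those containing $0$, is not regular. In fact the join itself is not regular once $X$ is infinite. Since $e_n\to\id$ in $\tau$ and $J_X$ is co-countable, every neighbourhood of $e_n$ ($n\in X$) meets $U\cap J_X$ whenever $U$ is a $\tau$-neighbourhood of $\id$ containing $e_n$. So the closure of every neighbourhood $U\cap(J_X\cup\{\id\})$ of $\id$ contains points $e_n\notin J_X\cup\{\id\}$. Hence $\id$ has no closed neighbourhood inside $J_X\cup\{\id\}$, and this topology is not metrizable at all.

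\textbf{Fallback.} It has no mechanism for non-Polishness. An open dense $J$ whose complement is a countable set of non-isolated points is perfectly compatible with the Baire category theorem. Moreover, if the points of $F$ keep only their $\tau$-neighbourhoods, as in $\tau(J)$, regularity fails again, now at points of $J$ near $F$.

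The missing idea is to split by a \emph{prime} ideal. This lets you make both the ideal and its complement open, so the added topology is the regular one pulled back from the discrete two-element semilattice (\cref{sem}). The join is then a topological sum of two metrizable pieces, and non-Polishness must come from a piece that fails to be Polish.

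The paper takes $I=\End\setminus\bigcup_n{\downarrow}g^n$ for a non-almost-surjective $g$ with $(0)g>0$. It joins $\{\varnothing,I,\End\setminus I,\End\}$ with $\tau^*(\End\setminus C_f)$. It then shows that $I\setminus C_f$ and $\End\setminus(I\cup C_f)$ are both dense in the Polish space $\End\setminus C_f$, so by Baire they cannot both be $G_\delta$.

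Within your own set-up there is a quicker fix. By additivity of $d$, $P:=S\setminus\operatorname{AS}(\N,<)$ is a prime ideal. Join $\{\varnothing,P,\operatorname{AS}(\N,<),S\}$ with $\tau^*(J_X)$.
\begin{itemize}
\item The isolated points of the join are exactly $\{\id\}\cup\{e_n:n\in X\}$, so distinct $X$ give distinct topologies.
\item The clopen piece $\operatorname{AS}(\N,<)\cap J_X$ is countable and has no isolated points: raising $f$ by $1$ beyond $n$ raises $d$ by one and converges to $f$.
\item So that piece is not Polish, and hence neither is the join.
\end{itemize}
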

\begin{proof}
     For all \(f\in \End\), let 
    \[C_f:= \operatorname{AS}(\N,<)\cap {\downarrow} f.\]
    Observe that for each $f\in\End$, $C_f$ is a countable coideal.
    We claim  that $f=g\iff C_f=C_g$ for all \(f, g \in \End\). Indeed, the necessity is obvious. Assume that $f,g\in \End$ are distinct. Then there exists $n\in\N$ such that $(n)f\neq (n)g$. 
    We may assume without loss of generality that $(n)f>(n)g$. 
    Define an order endomorphism $\pi$ as follows: $\pi{\restriction}_{n+1}=f{\restriction}_{n+1}$ and $(n+k)\pi=\pi(n)+k$ for all $k\in\N$. It is easy to see that $\pi\in C_f\setminus C_g$ and thus $C_f\neq C_g$.
    %(as \(C\)  is dense in \(\End\) with the pointwise topology).

    Hence the family \(\makeset{C_f}{\(f\in \End\)}\) has cardinality \(2^{\aleph_0}\). Let \(\tau\) be the pointwise topology on \(\End\).  By \cref{Pol2}, for each of these sets \(C_f\), we can define a new Polish semigroup topology \(\tau^*(\End\backslash C_f)\) on \(\End\) with respect to which \(C_f\) is the set of isolated points.
    Hence there are at least \(2^{\aleph_0}\) such topologies.

    Conversely, as shown in Lemma~\ref{lem:zariski_pointwise}, every Polish semigroup topology on \(\End\) contains the pointwise topology. 
    It follows from Proposition~\ref{prop-borel-measurable} that every Polish topology on \(\End\) has the same Borel sets as the pointwise topology.

    Let \(B\) be the set of Borel subsets of \(\End\) with respect to the pointwise topology. 
    Note that \(|B|=\mathfrak{c}\) and hence \(|[B]^{\aleph_0}|=\mathfrak{c}\) also (where \([B]^{\aleph_0}\) is the set of countably infinite subsets of \(B\)). Each Polish topology on \(\End\) has a basis in \([B]^{\aleph_0}\) and is completely determined by this basis. 
    Thus the set of Polish topologies on \(\End\) has at most \(|[B]^{\aleph_0}|=\mathfrak{c}\) elements, as required.

    We now switch to non-Polish semigroup topologies on $\End$. Let \(g\in  \End\) be fixed, not almost surjective, and such that \((0)g> 0\).
    The set $\End\backslash {\downarrow}g$ is an ideal of \(\End\). Note that the set $I:=\End\backslash (\bigcup_{n\in \N}{\downarrow}g^n)$ is also an ideal. Since $\bigcup_{n\in \N}{\downarrow}g^n$ is a subsemigroup of $\End$, we get that \(I\) is a prime ideal. 
    We show that for all \(f\in \End\) the sets \(I\backslash C_f\) and \(\End\backslash (I\cup C_f)\) are dense in \(\End \setminus C_f\) with respect to the pointwise topology, which in turn coincides with the topology $\tau^*(\End\backslash C_f)|_{\End\backslash C_f}$.
    For each $n\in\N$ and finite increasing map $\psi: n\rightarrow \N$, we can find an extension \(h\in \End\backslash C_f\) of \(\psi\) with \((m)h> (m)g^m\) for all \(m>n \). Hence the set $I\backslash C_f$ is dense in \(\End \setminus C_f\). 
    Since \((0)g>0\), for each $n\in\N$ and finite increasing map $\psi: n\rightarrow \N$, we can find an extension \(h\in \End\) of \(\psi\) with \((m)h< (m)g^{1+(n-1)\psi}\) for all \(m\in \N \). As \(g\) is not almost surjective, then so is $g^{1+(n-1)\psi}$.  Thus we can also choose \(h\) to be not almost surjective (in this case \(h\notin C_f\)). 
    We have now shown that \(\End \backslash (I\cup C_f)\) is dense in \(\Endd\backslash C_f\). 
    
    Observe that neither \(I\backslash C_f\) nor \(\End \backslash (I\cup C_f)\) contains isolated points as if \(f\) belongs to either of these sets, then the sequence \((f_n)_{n\in \N}\) defined by
    \[(x)f_n=\begin{cases}
        (x)f & x\leq n;\\
        (x)f+1 & x>n.
    \end{cases}\]
    is contained in the same set and converges to \(f\).
    
    As \(I\) is a prime ideal, the set \(\{\varnothing, I, \End \backslash I, \End\}\) is a semigroup topology on \(\End\). Thus for all \(f\in \End\), the topology \(\tau(f)\) generated by $$\tau^*(\End\backslash C_f)\qquad \hbox{ and } \qquad\{\varnothing, I, \End \backslash I, \End\}$$ is a second-countable semigroup topology.
    The topological space \((\End,\tau(f))\) is metrizable as well, as it is the topological sum of two metrizable spaces $I$ and $\End \backslash I$, where each of those carries the subspace topology inherited from \(\tau^*(\End\backslash C_f)\). Since neither the set \(I\backslash C_f\) nor the set \(\End \backslash (I\cup C_f)\) contains isolated points, the only isolated points of \((\End,\tau(f))\) are those of \(C_f\). Thus these topologies are all distinct.
    
    We claim that none of these topologies are Polish.
    Suppose for a contradiction that \(\tau(f)\) is Polish. As the set \(C_f\) is clopen, it follows that the subspace \(\End \backslash C_f\) is Polish also.
    Similarly, both of the clopen subspaces \(I\backslash C_f\) and \(\End \backslash (I\cup C_f) \) are Polish.
    However, these spaces carry the subspace topology from \(\tau^*(\End \backslash C_f)\). 
    Therefore, both these spaces are Polish subspaces of \((\End,\tau^*(\End \backslash C_f))\). Thus both  \(I\setminus C_f\) and \(\End \backslash (I\cup C_f)\) are \(G_\delta\) with respect to \(\tau^*(\End \backslash C_f)\). 
    It follows that the Polish space \(\End\backslash C_f\) can be partitioned into two dense \(G_\delta\) subsets, this contradicts the Baire Category Theorem.
\end{proof}

\section{$\operatorname{End}^\infty(\N,\leq)$}
The goal of this section is to prove each item of \cref{thm:endNi}. In doing so it will be convenient to state some of the results in sufficient generality to also apply them in the proof of \cref{thm:endZ}.

Recall that $\operatorname{End}^\infty(\N,\leq)$ is the submonoid of $\Endd$ consisting of all maps with infinite image. For an element $x$ of semigroup $S$ by $l_x$ we denote the left shift by $x$ in $S$, i.e. $l_x: S\rightarrow S$, $y\mapsto xy$. Similarly,  $r_x: S\rightarrow S$, $y\mapsto yx$ is the right shift by $x$ in $S$. A topology on a semigroup $S$ is called {\em shift-continuous} if all left and all right shifts in $S$ are continuous.

\begin{theorem}\label{magenta}
    The semigroup Zariski topology on \(\operatorname{End}^\infty(\N,\leq)\) is the pointwise topology.
\end{theorem}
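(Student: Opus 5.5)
The inclusion of the semigroup Zariski topology in the pointwise topology is the standard one. The pointwise topology on any subsemigroup of $\N^\N$ is a Hausdorff semigroup topology. Every map $f\mapsto a_0fa_1\cdots fa_n$ is pointwise continuous, so every equaliser set $\set{f}{a_0fa_1\cdots fa_n=b_0fb_1\cdots fb_m}$ is pointwise closed. Hence each Zariski subbasic set is pointwise open. All the work is in the reverse inclusion: I must show that every $U_{x,y}$ is Zariski open in $S:=\operatorname{End}^\infty(\N,\leq)$.

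\cref{lem-zariski-pointwise} cannot be applied verbatim. Its condition (3) would require finitely many $c_i\in S$, each with $x$ in its image, such that every fibre $(y)s^{-1}$ avoiding $x$ misses some $\im(c_i)$. For $s\in S$ these fibres are finite intervals, possibly of the form $[x+1,N]$ with $N$ arbitrarily large. Since each $\im(c_i)$ is infinite, finitely many of them cannot avoid all such intervals. So I would compute Zariski-open sets directly, with the following basic device. Let $d,d'\in S$ differ exactly on a set $D\subseteq\N$, and let $c\in S$. Then
\[
cfd\neq cfd' \iff \im(cf)\cap D\neq\varnothing ,
\]
so $\set{f\in S}{\im(cf)\cap D\neq \varnothing}$ is Zariski open. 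Now take $c:z\mapsto z+x$. By monotonicity $\min\im(cf)=(x)f$. Taking $D=[0,y]$, realised for instance by $d=\id$ and $d'$ with $(z)d'=y+1$ for $z\leq y$ and $(z)d'=z$ otherwise, shows that $\set{f}{(x)f\leq y}$ is Zariski open for all $x,y$.

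It then remains to see that $\set{f}{(x)f\geq y}$ is Zariski open; intersecting with the sets above gives $U_{x,y}$. This is the step I expect to be the main obstacle. The natural reformulation, $\im(cf)\cap[0,y-1]=\varnothing$, is an equation and hence a closed condition. The device above only ever produces ``meets $D$'' conditions, and with $D$ cofinal these are automatic because $f$ has infinite image. So I would look for polynomials in which $f$ occurs twice, such as $cfdf$ versus $cfd'f$, or $fdf$ versus $fd'f$.

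The idea is to exploit that the infinite image of $f$ lets a second copy of $f$ detect where $[0,(x)f]$ is sent. Concretely, I would try to choose $c,d,d'$ so that the two sides differ exactly when some point below $(x)f$ is moved past a threshold depending on $y$. This would express $(x)f\geq y$ as a finite union of inequations. If that fails, my fallback is to use $(x)f\geq y$ iff $(x)f\notin\{0,\dots,y-1\}$. I would then try to show that each singleton condition $(x)f=j$ with $j<y$ is Zariski closed, using that $[0,j]$ is finite, so that the complement of their finite union is open.

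Once $U_{x,y}$ is Zariski open for all $x,y\in\N$, the pointwise topology is contained in the Zariski topology, and together with the first inclusion the two topologies coincide.
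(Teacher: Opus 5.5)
Your first inclusion, your observation that \cref{lem-zariski-pointwise} cannot be applied, and your argument that $\{f\in S:(x)f\le y\}$ is Zariski open are all correct. But this is the easy half, and the proposal stops exactly where the content of the theorem lies. You never show that $\{f\in S:(x)f\ge y\}$ is Zariski open; you only list things you would try. The fallback is circular, because proving that every $\{f:(x)f=j\}$ with $j<y$ is Zariski closed is just a restatement of the goal. The number of occurrences of $f$ in the polynomial is also not the issue: by \cref{magenta2}, inequations $a_0fa_1\neq b_0fb_1$ already generate the pointwise topology. What is missing is a mechanism that turns such existential inequations into a lower bound on a value of $f$. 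Without it, the reverse inclusion is not proved.

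One way to supply this within your direct approach is to combine two inequations and then use monotonicity. The first forces two values of $f$ into different fibres of a collapsing right factor. The second forces a prescribed value to be attained at a prescribed position. Concretely, given $f$ with $(x)f=v$, set up the following:
\begin{itemize}
\item let $[x_0,x_1]=(v)f^{-1}$;
\item let $g\in S$ send $[0,v]$ to $0$ and each $z>v$ to $z-v$;
\item let $e,e'\in S$ be given by $(0)e=x_0$, $(0)e'=x_1+1$, and $(z)e=(z)e'=x_1+1+z$ for $z\ge 1$;
\item let $\iota_v$ be the increasing injection with image $\N\setminus\{v\}$.
\end{itemize}
Let $W$ be the set of $\phi\in S$ with $e\phi g\neq e'\phi g$ and $e\phi\iota_v\neq e\phi\iota_{v+1}$. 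Then $W$ is Zariski open and contains $f$, since $(x_0)fg=0\neq(x_1+1)fg$ and $v=(0)ef$.

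Now take $\phi\in W$. The second condition says $v\in\im(e\phi)$. If $v=(w)\phi$ with $w\ge x_1+2$, then $(x_0)\phi\le(x_1+1)\phi\le v$, so $(x_0)\phi g=(x_1+1)\phi g$, contradicting the first condition. Hence $(x_0)\phi=v$, and since $x\ge x_0$ we get $(x)\phi\ge v$. Thus $W\cap\{\phi:(x)\phi\le v\}$ is a Zariski open neighbourhood of $f$ contained in $U_{x,v}$.

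The paper exploits the same phenomenon but organises it differently. It first shows $U_m^{1_\N}$ is open using the sets $K_i$ and $I_i$; strict increase on an initial segment is where its lower bounds come from. It then transfers this to the injective map $h^{\im}$, using the right shift by a surjection $g$ with $h^{\im}g=1_\N$ together with the conditions $x_i\in\im(\phi)$. Finally, it treats arbitrary $h$ by left shifts by $f_{\min}$ and $f_{\max}$ and a monotone sandwiching argument.
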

\begin{proof}
Since the pointwise topology on $\operatorname{End}^\infty(\N,\leq)$ is a Hausdorff semigroup topology, we get that the semigroup Zariski topology on $\operatorname{End}^\infty(\N,\leq)$ is contained in the pointwise topology. To show the converse inclusion, it suffices to check that for any positive integer \(n\) and \(h\in \operatorname{End}^\infty(\N,\leq)\) the set
\[U_n^h:=\makeset{k\in \operatorname{End}^\infty(\N,\leq)}{\(k{\restriction}_n=h{\restriction}_n\)}\]
is a (not necessarily open) neighborhood of \(h\) with respect to the semigroup Zariski topology.

We first show that \(U_{n+2}^{1_\N}\) is open.
For all \(i\in \mathbb{N}\), let \(\iota_i\in \Endd\) be the unique injection with image \(\N\backslash \{i\}\). In particular, the maps \(\iota_i, \iota_{i+1}\) agree on all values except \((i)\iota_i=i+1\) and \((i)\iota_{i+1}=i\).

    For all \(i\in \N\) we define
    \[K_{i}:=\makeset{f\in \operatorname{End}^\infty(\N,\leq)}{\((i)f=(i+1)f\)}=\makeset{f\in \operatorname{End}^\infty(\N,\leq)}{$\iota_i f = \iota_{i+1} f$}.\]
     Similarly, for all \(i\in \N\) define
    \[I_{i}:=\makeset{f\in \operatorname{End}^\infty(\N,\leq)}{\(i\not \in \im(f)\)} =\makeset{f\in \operatorname{End}^\infty(\N,\leq)}{\(f \iota_i= f \iota_{i+1}\)}.\]
    Observe that for each $i\in\N$ the sets $K_i$ and $I_i$ are closed in the semigroup Zariski topology on $\operatorname{End}^\infty(\N,\leq)$.
      It is easy to check that  for all \(m\in \N\)\ \[U_{m}^{1_\N}=\left(\bigcap_{i<m-1}\operatorname{End}^\infty(\N,\leq)\backslash K_i\right) \cap\left( \bigcap_{i<m} \operatorname{End}^\infty(\N,\leq)\backslash I_i\right).\] 
      It follows that for each $m\in\N$ the set $U_{m}^{1_\N}$
    is open in the semigroup Zariski topology on $\operatorname{End}^\infty(\N,\leq)$.

Fix the increasing enumeration $\im(h)=\{x_n:n\in\N\}$. Define the map $f_{\min}\in \operatorname{End}^\infty(\N,\leq)$ by
$(n)f_{\min}=\min ((x_n)h^{-1})$. Let $h^{\im}$ be the function that sends $n$ to $x_n$ and note that \(h^{\im}=f_{\min}h\). 
Let $$(n)g=\begin{cases}
0,& \hbox{ if } 0\leq n\leq (0)h;\\
k,& \hbox{ if } (k-1)h<n\leq (k)h.
\end{cases}
$$

%Let \(g\) be the largest surjection such that \(h^{\text{image}}g = 1\). 
Since $\mathfrak Z$ is shift-continuous and $h^{\im}g=1_\N$, we get that 
\((U_{n+2}^1) r_g^{-1}\) 
is open in the semigroup Zariski topology and contains \(h^{\im}\), where $r_g$ is the right shift by $g$ in the monoid $\operatorname{End}^\infty(\N,\leq)$. Moreover, the set $$T:=(U_{n+2}^{1_\N}) r_g^{-1}\cap \bigcap_{i<{n+2}} (\operatorname{End}^\infty(\N,\leq)\backslash I_{x_i})$$ is open in $(\operatorname{End}^\infty(\N,\leq),\mathfrak Z)$ and contains $h^{\im}$. 
We show that $T\subseteq U_{n+1}^{h^{\im}}$, which would imply that $ U_{n+1}^{h^{\im}}$ is a neighborhood of $h^{\im}$ in $(\operatorname{End}^\infty(\N,\leq),\mathfrak Z)$.
 Suppose that \(\phi\in T\), i.e., \(\phi g \in U_{n+2}^{1_\N}\) and the image of \(\phi\) contains the first \(n+1\) points in the image of \(h\). Then for \(i<n+1\), \((i)\phi\) must belong to \((i)g^{-1}\). This implies that for \(i<n+1\), \((i)\phi\) is the only value in the image of \(\phi\) which belongs to \((i)g^{-1}\). In particular \((i)\phi=x_i\) for all $i<n+1$, as $x_i\in (i)g^{-1}$ for all $i<n+1$ by the definition of $g$ and by the choice of $x_i$. Hence $\phi\in U_{n+1}^{h^{\im}}$, as required.

%%%%%%%%%%%%%%%%%%%%%%%%%%%%%%%%%%%%%%%%%%%%%%%%%%%%%%%%%%%%%%%%%%% 
%We now have 
%\[h^{\im}\in\makeset{k\in \operatorname{End}^\infty(\N,\leq)}{for all \(i<n+2\) we have \(kg \iota_i\neq kg \iota_{i+1}\),\\ \(\iota_i kg \neq \iota_{i+1} kg\), and \(k\iota_{(i)h^{\text{image}}}\neq k\iota_{(i)h^{\text{image}}+1}\)} \subseteq U_{n+1}^{h^{\text{image}}}.\]

%%%%%%%%%%%%%%%%%%%%%%%%%%%%%%%%%%%%%%%%%%%%%%%%%%%%%%%%%%%%%%%%%%%%
We finally move to \(U_h^n\). Define $f_{\max}\in \operatorname{End}^\infty(\N,\leq)$ by $(n)f_{\max}=\max((x_n)h^{-1})$.  Let $$W:=(U_{n+1}^{h^{\im}})l_{f_{\min}}^{-1}\cap (U_{n+1}^{h^{\im}}) l_{f_{\max}}^{-1}.$$ Since \(f_{\max}h=f_{\min}h=h^{\im}\) and $U_{n+1}^{h^{\im}}$ is a neighborhood of $h^{\im}$, we get that $W$ is a neighborhood of $h$ in $(\operatorname{End}^\infty(\N,\leq),\mathfrak Z)$. It remains to show that $W\subseteq U_{n}^h$.

%\[h\in ((U_{n+1}^{h^{\text{image}}})l_{f_l}^{-1}\cap (U_{n+1}^{h^{\text{image}}}) l_{f_u}^{-1})\subseteq U_{n}^h.\]
Suppose that \(\phi\in W\), i.e. both $f_{\min}\phi$ and $f_{\max}\phi$ belong to $U_{n+1}^{h^{\im}}$. First we show that if \(i<(n)f_{\max}\), then $(i)h=(i+1)h$ implies that $(i)\phi=(i+1)\phi$.
Observe that for all \(i\in \N\) we have
\begin{align*}
&(i)h=(i+1)h \iff \exists m \hbox{ such that } (i, i+1\in (x_m)h^{-1}) \iff\\
& \iff \exists m \hbox{ such that } ((m)f_{\min} \leq i< (m)f_{\max}).
\end{align*}
Hence if \(i< (n)f_{\max}\) and \((i)h=(i+1)h\), then there is \(m\in\N\) with \((m)f_{\min} \leq i< (m)f_{\max}\), and
$$(i)\phi\leq (i+1)\phi\leq (m)f_{\max}\phi=(m)h^{\im} =(m)f_{\min}\phi\leq (i)\phi,$$
where the equalities hold since \((m)f_{\min}\leq i<(n)f_{\max}\) implies that \(m<n\), and from above $f_{\min}\phi, f_{\max}\phi\in U_{n+1}^{h^{\im}}$. So \((i)\phi=(i+1)\phi\).
Hence $(i)h=(i+1)h$ implies that $(i)\phi=(i+1)\phi$ for all  \(i<(n)f_{\max}\).

We show by induction that  \((i)\phi=(i)h\) for all \(i<n\).
Since \((0)f_{\min}=0\), we get that $$(0)\phi=(0)f_{\min}\phi=(0)h^{\im}=x_0=(0)h.$$
Suppose that $\phi{\restriction}_i=h{\restriction}_i$ for some \(0<i<n\). We show that \((i)\phi=(i)h\).
There are two cases to consider:
\begin{enumerate}
 \item  \((i)h=(i-1)h\);
 \item  \((i)h>(i-1)h\).
\end{enumerate}

 (1)  Since \(i-1<n\leq (n)f_{\max}\), the arguments above imply that $(i)\phi=(i-1)\phi$.  By the inductive assumption, $(i-1)h=(i-1)\phi$. Thus $(i)\phi=(i)h$.

 (2) In this case \(i=\min((i)hh^{-1}\)). So there is \(x\in \N\setminus \{0\}\) such that \((x)f_{\min}=i\). Since $f_{\min}$ is an order preserving injection, we have \(x\leq i<n\). Then, taking into account that $\phi\in W$, we obtain the following:
\[(i)h=(x)f_{\min}h=(x)h^{\im}=(x)f_{\min}\phi=(i)\phi.\]
Hence $W\subseteq U_n^h$, witnessing that $U_n^h$ is a neighborhood of $h$ with respect to the semigroup Zariski topology, as required.
\end{proof}

%A semigroup polynomial $f:S\rightarrow S$, $f(x)=a_0xa_1\cdots xa_n$

\begin{remark}\label{magenta2}
The above proof shows that, in fact, the sets $$\{x\in \operatorname{End}^\infty(\N,\leq): a_0xa_1\neq b_0xb_1\},$$ where $a_0,a_1,b_0,b_1\in \operatorname{End}^\infty(\N,\leq)$ generate the pointwise topology on $\operatorname{End}^\infty(\N,\leq)$. 
 More concretely, using \(f_{\max}, f_{\min}, g, \iota_i\) as defined in the above proof, we have shown
 \[h\in\makeset{k\in \operatorname{End}^\infty(\N,\leq)}{for all \(i<n+2\) and \(f\in \{f_{\max}, f_{\min}\}\) we have \(f kg \iota_i\neq f kg \iota_{i+1}\),\\ \(\iota_i fkg \neq \iota_{i+1} fkg\),  and \(fk\iota_{(i)h^{\im}}\neq fk\iota_{(i)h^{\im}+1}\)
 } \subseteq U_{n}^{h}\]
 for all \(h\in\operatorname{End}^\infty(\N,\leq) \) and \(n\in \N\).
\end{remark}

\begin{definition}
    We define $\operatorname{End}^\infty(\Z,\leq)$ to be the submonoid of $\operatorname{End}(\Z,\leq)$ consisting of those maps whose image neither contains a minimum element nor a maximum element.
\end{definition}

We will need to show that both \(\operatorname{End}^\infty(\mathbb{N},\leq)\) and \(\operatorname{End}^\infty(\mathbb{Z},\leq)\) have property \(\mathbb{XX}\) with respect to themselves.  The following lemma is the first step in this direction.

\begin{lemma}\label{strong surjective}
Let \(X\) be a  subset of \((\Z,\leq)\).
When giving \(\Xendd\) the pointwise topology, the following assertions hold: 
\begin{enumerate}
    \item for each surjective endomorphism $f$, the right shift $r_f$ is open;
    \item for each injective endomorphism $g$, the left shift $l_g$ is open.
\end{enumerate}
\end{lemma}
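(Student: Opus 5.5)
It suffices to check that the image of each basic open set is open. The pointwise topology on \(\Xendd\) has as a basis the sets
\[[\psi]:=\makeset{k\in\Xendd}{\(k{\restriction}_F=\psi\)},\]
where \(F\subseteq X\) is finite and \(\psi:F\to X\) is order preserving. Since compositions are read left to right, \((k)r_f=kf\) and \((k)l_g=gk\).

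\((1)\): Let \(f\) be a surjective endomorphism and \(k\in[\psi]\). I claim that \([\psi f]\), i.e. the set of maps agreeing with \(kf\) on \(F\), is contained in \(([\psi])r_f\). This makes \(([\psi])r_f\) a union of basic open sets, hence open.

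Take \(m\in\Xendd\) with \(m{\restriction}_F=\psi f\). We need \(k'\in[\psi]\) with \(k'f=m\). For each \(x\notin F\), define \((x)k'\) to be some element of \(((x)m)f^{-1}\); this fibre is nonempty because \(f\) is surjective. Since \(f\) is order preserving, each fibre \(((y)f)^{-1}\) is an interval, that is, a convex subset of \(X\). For distinct \(y<y'\) these intervals lie in order, \(((y)f^{-1}<(y')f^{-1})\).

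The choice must respect the order, so make it as follows. For each value \(y\in\im(m)\), let \(x_1<x_2\) be consecutive points of \(F\cup\{\pm\infty\}\), and pick a single element \(c_y\) of \((y)f^{-1}\) for the points between \(x_1\) and \(x_2\). If \(y=(x_1)m\), set \(c_y=(x_1)\psi\); if \(y=(x_2)m\), set \(c_y=(x_2)\psi\).

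The one delicate case is a block mapped by \(m\) to a common value \(y\) whose two endpoints \(x_1,x_2\) also map to \(y\), but where \((x_1)\psi\neq(x_2)\psi\). Then assign points up to some cut to \((x_1)\psi\) and the rest to \((x_2)\psi\); both lie in \((y)f^{-1}\). Monotonicity of \(k'\) follows from the ordering of the fibres together with the monotonicity of \(\psi\) and \(m\). The main obstacle is this bookkeeping near the points of \(F\).

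\((2)\): Let \(g\) be an injective endomorphism, so \(g\) is an order embedding. Given \(k\in[\psi]\), I claim that \([g{\restriction}_F\,\psi]\) is contained in \(([\psi])l_g\). This basic set is indexed by the finite set \((F)g\), with \((x)g\mapsto(x)\psi\), and it contains \(gk\).

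Take \(m\) agreeing with \(gk\) on \((F)g\). Define \(k'\) on \(\im(g)\) by \(((x)g)k'=(x)m\), which is well defined since \(g\) is injective. This is order preserving because \(g\) reflects order. It agrees with \(\psi\) on \(F\), since \(((x)g)k'=(x)m=((x)g)\psi\) transported back through \(g\).

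It remains to extend \(k'\) to all of \(X\) monotonically. For \(z\notin\im(g)\), set \((z)k'\) equal to the value at the nearest point of \(\im(g)\) below \(z\). If no such point exists, use the nearest point above; this nearest point exists because \(X\subseteq\Z\) and \(\im(g)\neq\varnothing\). The resulting map is monotone and satisfies \(gk'=m\).

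The domain is \(X\subseteq\Z\), so these nearest-point choices are well defined. If \(X\) is empty the statement is trivial.
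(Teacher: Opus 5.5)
\textbf{Part (1)} is correct. It is essentially the paper's argument: the image of a basic open set under \(r_f\) is the basic set \([\psi f]\), and preimages are chosen inside the convex, ordered fibres of \(f\). The paper uses basic sets determined by an interval \([a,b]\cap X\), so only the regions below \(a\) and above \(b\) need extreme choices (\(c^k_{\min}\), \(c^k_{\max}\)). This avoids your gap-by-gap bookkeeping, but your version also works.

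\textbf{Part (2)} has a genuine gap. With the paper's convention, \(gk'=m\) means \(((x)g)k'=(x)m\), which determines \(k'\) only on \(\im(g)\). But \(k'\in[\psi]\) is a condition on \(F\), and \(F\) need not be contained in \(\im(g)\).

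\begin{itemize}
\item Your check that \(k'\) agrees with \(\psi\) covers only \(F\cap\im(g)\). Even for that you need \(m\) to agree with \(gk\) on \(\{x:(x)g\in F\}\), not on \((F)g\). Note that the value of \(gk\) at \((x)g\) is \((((x)g)g)k\), not \((x)\psi\).
\item At points of \(F\setminus\im(g)\), your nearest-point extension overrides \(\psi\).
\item Restoring \(\psi\) there can destroy monotonicity, because your neighbourhood of \(gk\) does not control \(m\) at the points whose \(g\)-images bracket \(F\).
\end{itemize}

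For example, take \(X=\Z\), \((x)g=2x\), \(F=\{1\}\), \((1)\psi=0\) and \((x)k=x-1\). Then \(m\colon x\mapsto x+1\) agrees with \(gk\colon x\mapsto 2x-1\) on \((F)g=\{2\}\), and vacuously on \(\{x:(x)g\in F\}=\varnothing\). Yet \(m\notin([\psi])l_g\), since \(gk'=m\) forces \((0)k'=1>0=(1)k'\).

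The missing idea is the bracketing used in the paper.

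\begin{enumerate}
\item First shrink to a basic neighbourhood fixing \(k\) on \([a,b]\cap X\). Here \(a\le\min F\le\max F\le b\), and \(a\) lies in \(\im(g)\) or is the minimum of \(X\), while \(b\) lies in \(\im(g)\) or is the maximum of \(X\). Such \(a,b\) exist: if \(X\subseteq\Z\) has no minimum, a strictly increasing \(g\colon X\to X\) has image unbounded below, and dually for the maximum.
\item Then use the neighbourhood of \(gk\) consisting of maps agreeing with \(gk\) on the finite set \(\{x: a\le(x)g\le b\}\). This set is finite because \(g\) is injective.
\item For \(m\) in this neighbourhood, \(g^{-1}m\) and \(k{\restriction}_{[a,b]\cap X}\) agree on their common domain.
\item Their union \(p\) is order preserving. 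Two consecutive points of \(\dom(p)\) either both lie in \([a,b]\), or both lie in \(\im(g)\), because in that case \(a\) (respectively \(b\)) is not an endpoint of \(X\) and so must belong to \(\im(g)\).
\end{enumerate}

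Only at this point does your monotone extension step finish the proof.
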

\begin{proof}
If $X$ is finite, then there is nothing to show as \(\Xendd\) is discrete, and thus each self-map of $\Xendd$ is open and closed. 

Assume that $X$ is infinite.
For all \(a,b\in X\) with \(a\leq b\) and \(f\in  \Xendd\), we define \(f_{a,b}\) to be the restriction of \(f\) to the set \(\makeset{x\in X}{\(a\leq x \leq b\)}\).
We then define
\[U^f_{a, b}:=\makeset{h\in \Xendd}{\(f_{a,b} \subseteq h\)}.\]
These sets form a basis for the topology on \(\Xendd\).

(1) Let \(f\in \Xendd\) be surjective, \(g\in \Xendd\) and $a,b\in X$ with $a\leq b$. 
In order to show that the right shift $r_f$ is open, it suffices to check that \(U_{a, b}^gf=U_{a,b}^{gf}\). The inclusion \(U_{a,b}^gf\subseteq U_{a,b}^{gf}\) is obvious. 
To show the converse inclusion fix any $w\in U_{a,b}^{gf}$. 
Note that for all \(k\in X\), the set \(I_k:=((k)w)f^{-1}\) is a nonempty convex subset of \(X\). Moreover the sets \(I_k\) are pairwise disjoint, and \(i_j<i_k\) whenever \(i_j\in I_j\), \(i_k\in I_k\) and \(j<k\). Since $(k)w=(k)gf$ providing \(a\leq k \leq b\), we have that \((k)g\in I_k\) for all \(a\leq k \leq b\).
For each \(k\in X\), the set \(\im(g_{a,b})\cap I_k\) is finite (possibly empty). So we can choose for each \(k\in X\) elements \(c^k_{\max}, c^k_{\min}\in I_k\) such that \(c^k_{\max}\) is not lower than any element of \(\im(g_{a,b})\cap I_k\), \((c^k_{\min}\) is no larger than any element of \(\im(g_{a,b})\cap I_k\), and \(c^k_{\max}\geq  c^k_{\min}\). Note that if $\im(g_{a,b})\cap I_k=\emptyset$, then  \(c^k_{\max}\geq  c^k_{\min}\) is the only condition  \(c^k_{\max}\) and \(c^k_{\min}\) are obliged to satisfy.
We define $h\in U_{a,b}^g$ as follows:
\[(k)h:=\begin{cases}
    c^k_{\min} & k< a\\
    c^k_{\max} & k> b\\
    (k)g_{a,b} & a\leq k\leq b.
\end{cases}\]
As \((k)h\in I_k\) for all \(k\in X\), it follows that \(hf=w\). 
In particular \(w=hf\in U^{g}_{a,b}f\) as required.

% the element of \(((k)w)f^{-1}\) with largest absolute value whenever \(|k|\geq n\). As \(w\in U^{gf}?n\), it follows that \(hf=w\). Moreocer 

% as follow
% \[(k)h=\begin{cases}
%     (k)g & |k|< n\\
%     ((m)w)f^{-1} &|k|\geq n
% \end{cases}\]

% $(k)h=(k)g$ for all $-n<k<n$, $(m)h=\max\big(((m)w)f^{-1}\big)$ for all $m\geq n$ and $(m)h=\min\big(((m)w)f^{-1}\big)$ for all $m\leq -n$. 

% Since the function $f$ is surjective, the map $h$ is well defined.
% It is straightforward to check that $hf=w$. We need to show that $h\in U_n^g$. Since $(k)h=(k)g$ for all \(-n<k<n\), it suffices to check that $h\in\Xendd$. Since $w,f\in\Xendd$, we get that  $$\max\big(((m)w)f^{-1}\big)\leq \max\big(((m+1)w)f^{-1}\big),$$ 
% $$\min\big(((m)w)f^{-1}\big)\geq \min\big(((m-1)w)f^{-1}\big),$$ 
% witnessing that $(m)h\leq (m+1)h$ for all $m$ such that $|m|\geq n$. 
% It remains to check that $(n)h\geq (n-1)h$ and $(-n)h\leq (-(n-1))h$. Taking into account that $(n-1)w=(n-1)hf$ and \((-(n-1))w=(-(n-1))hf\), we get $$(n)h=\max\big(((n)w)f^{-1}\big)\geq \max\big(((n-1)w)f^{-1}\big)=\max\big(((n-1)hf)f^{-1}\big)\geq (n-1)h,$$
% $$(-n)h=\min\big(((-n)w)f^{-1}\big)\leq \min\big(((-(n-1))w)f^{-1}\big)=\min\big(((-(n-1))hf)f^{-1}\big)\leq (n-1)h.$$
% Hence $h\in U_n^g$ and $U_n^gf=U_n^{gf}$, as required.

(2) Let $g, f\in\Xendd$ be such that \(g\) is an injection. We only need to show that the image of each neighborhood of \(f\) under \(l_g\) is a neighborhood of \(gf\).
Let \(a,b\in X\) be such that 
\begin{enumerate}
    \item \(a\leq b\),
    \item either \(a\in \im(g)\) or \(a=\min(X)\),
    \item either \(b\in \im(g)\) or \(b=\max(X)\).
\end{enumerate}
We show that \(gU^f_{a,b}\) is a neighborhood of \(gf\). This is sufficient as the family \[\{U_{a,b}^f: a,b \hbox{ satisfy conditions } (1)\hbox{--}(3)\}\] forms a neighbourhood base at \(f\).
Let \(a',b'\in X\) be such that whenever \(a\leq (k)g \leq b\), we have \(a'\leq k \leq b'\) (these exist as \(g\) is injective and there are only finitely many such \((k)g\)).
We show that \(U^{gf}_{a',b'} \subseteq gU^f_{a,b}\).
Let \(w\in U^{gf}_{a',b'}\) and consider the order preserving map \(g^{-1}w:\im(g) \to X\).
If \(h\in X^X\), then \(gh=w \iff h\supseteq g^{-1}w\). 
To conclude that \(w\in gU^f_{a,b}\), we need to show that \(g^{-1}w\cup f_{a,b}\) has an extension in \(\operatorname{End}(X)\).
It is easy to verify that every order preserving map between subsets of \(X\) has an extension to an element of \(\operatorname{End}(\Z,\leq)\) (in particular to an element of \(\operatorname{End}(X,\leq)\)).
Thus we only need to show that \(p:=g^{-1}w\cup f_{a,b}\) is an order preserving partial function.

We first show that \(p\) is a partial function. By the choice of \(a',b'\), if \(i\in \im(g)\) and \(a\leq i\leq b\) then \(a'\leq (i)g^{-1}\leq b'\). 
Thus \(((i)g^{-1})w=((i)g^{-1})gf=(i)f\) witnesses that \(g^{-1}w\) and \(f_{a,b}\) agree 
on $\dom(g^{-1}w)\cap \{x\in X: a\leq x\leq b\}=\im(g)\cap \{x\in X: a\leq x\leq b\}$.
Hence \(p\) is a partial function.

It remains only to show that \(p\) is order preserving. Let \(k,k^+\in \dom(p)\) be arbitrary such that \(k^+\) is the successor of \(k \) in the poset \(\dom(p)\). It suffices to show that \((k)p\leq (k^+)p\). There are 3 cases to consider:
\begin{enumerate}[\rm(a)]
    \item \(k<a\). As \(k^+\leq a\) and \(k^+\) is not minimal in \(X\), it follows from the choice of \(a\) (see condition (2)) that \(a\in \im(g)\).
    As \(k,k^+\in \dom(p)\), it follows that \(k,k^+\in \dom(g^{-1}w)=\im(g)\).
    \item \(a\leq k<b\). In this case \(k,k^+\in \dom(f_{a,b})\).
    \item \(b\leq k\). As \(k\geq b\) and \(k\) is not maximal in \(X\), it follows from the choice of \(b\) (see condition (3)) that \(b\in \im(g)\).
    As \(k,k^+\in \dom(p)\), it follows that \(k,k^+\in \dom(g^{-1}w)=\im(g)\).
\end{enumerate}

We showed above that in each case the points \(k,k^+\) belong to the domain of a common order preserving map contained in \(p\), which implies that \((k)p\leq (k^+)p\).
\end{proof}

\begin{corollary}\label{open D1}
Let \(X\in \{\N,\Z\}\). When giving $\operatorname{End}^\infty(X,\leq)$ the pointwise topology, we have that 
\begin{enumerate}
    \item for each surjective endomorphism $f$, the right shift $r_f: \operatorname{End}^\infty(X,\leq)\rightarrow \operatorname{End}^\infty(X,\leq)$ is open;
    \item for each injective endomorphism $g$, the left shift $l_g: \operatorname{End}^\infty(X,\leq)\rightarrow \operatorname{End}^\infty(X,\leq)$ is open.
\end{enumerate}
\end{corollary}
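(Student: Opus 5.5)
The plan is to deduce the corollary from \cref{strong surjective} applied to \(X\) itself, by noting that \(\operatorname{End}^\infty(X,\leq)\) sits inside \(\Xendd\) as a subspace with respect to the pointwise topology and that shifts by the relevant elements behave well with respect to this subspace. Throughout, \(f\) and \(g\) are taken in \(\operatorname{End}^\infty(X,\leq)\); a surjective or injective endomorphism automatically has unbounded image, so it lies in \(\operatorname{End}^\infty(X,\leq)\) anyway. Let \(M:=\operatorname{End}^\infty(X,\leq)\). Since \(M\) is a submonoid, the shifts \(r_f\) and \(l_g\) map \(M\) into \(M\) and agree with the corresponding shifts on \(\Xendd\).

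Fix an open set \(V\subseteq M\), say \(V=W\cap M\) with \(W\) open in \(\Xendd\). I claim \((V)r_f=(W)r_f\cap M\), and similarly \((V)l_g=(W)l_g\cap M\). Since \((W)r_f\) is open in \(\Xendd\) by \cref{strong surjective}, this gives openness in \(M\). The inclusion \(\subseteq\) is clear. For \(\supseteq\), take \(w\in M\) with \(w=hf\) for some \(h\in W\); it remains to show \(h\in M\).

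For the right shift, if \(\im(h)\) were finite (for \(X=\N\)), or had a minimum or maximum (for \(X=\Z\)), then \(\im(hf)=(\im h)f\) would inherit that property, since \(f\) is order preserving and so sends the maximum of \(\im h\) to the maximum of \(\im(hf)\), and likewise for minima. This contradicts \(w\in M\). Hence \(h\in M\). Note this uses only monotonicity of \(f\), not surjectivity.

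For the left shift, \(w=gh\) gives \(\im(w)=(\im g)h\subseteq\im h\). If \(X=\N\), then \(\im h\) is infinite because \(\im w\) is, so \(h\in M\). If \(X=\Z\), I must check that \(\im h\) has no maximum or minimum. Suppose \(\im h\) has a maximum \(m=(x)h\). Because \(g\in M\), the set \(\im g\) is unbounded above, so there is \(y\) with \((y)g\geq x\), and then \((y)gh=m\) by monotonicity. Thus \(m\) is the maximum of \(\im(w)\), contradicting \(w\in M\). Minima are handled symmetrically.

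The only real subtlety is exactly this preimage-membership check. There is also a side issue: the proof of \cref{strong surjective}(1) constructs the preimage \(h\) inside \(U^g_{a,b}\), and one might worry that this \(h\) need not lie in \(M\). The argument above sidesteps that worry, because any \(h\) with \(hf\in M\) is automatically in \(M\).
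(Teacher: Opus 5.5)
Your proof is correct and is essentially the paper's proof: both transfer openness from \cref{strong surjective} to the subspace via the identity \((W)r_f\cap \operatorname{End}^\infty(X,\leq)=(W\cap\operatorname{End}^\infty(X,\leq))r_f\) (and its analogue for \(l_g\)), which holds because \(\Xendd\setminus\operatorname{End}^\infty(X,\leq)\) is a two-sided ideal. The only difference is that you verify this ideal property explicitly, whereas the paper simply asserts it.
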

\begin{proof}
Note that $\operatorname{End}(X,\leq)\setminus \operatorname{End}^\infty(X,\leq)$ is a two-sided ideal and \(f,g\in \operatorname{End}^{\infty}(X,\leq)\).
By Lemma~\ref{strong surjective}, the maps \(r_f,l_g:\operatorname{End}(X,\leq)\to \operatorname{End}(X,\leq)\) are open. Let \(U\subseteq \operatorname{End}(X,\leq)\) be open. It is straightforward to check  that
\[(U)r_f\cap \operatorname{End}^{\infty}(X,\leq)=(U\cap \operatorname{End}^{\infty}(X,\leq))r_f.\]
Thus $(U\cap \operatorname{End}^{\infty}(X,\leq))r_f$ is open in \(\operatorname{End}^{\infty}(X,\leq)\) implying that \(r_f\) is an open map from \(\operatorname{End}^{\infty}(X,\leq)\) to itself. Similarly \(l_g\) is also an open mapping.
\end{proof}

The following algebraic fact will help us to establish that the monoids $\operatorname{End}^\infty(\N,\leq)$ and $\operatorname{End}^\infty(\Z,\leq)$ endowed with the pointwise topology have property $\mathbb{XX}$ with respect to themselves.

\begin{lemma}\label{term}
Let \(X\in \{\N,\Z\}\).    For any $f,g\in \operatorname{End}^\infty(X,\leq)$, there exist an injection $\phi\in \operatorname{End}^\infty(X,\leq)$ and a surjection $\psi\in \operatorname{End}^\infty(X,\leq)$ such that $\phi g \psi=f$.
\end{lemma}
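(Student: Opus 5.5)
We need, given $f,g$ with infinite unbounded images (for $\Z$: images with no max and no min), an order injection $\phi$ and an order surjection $\psi$ with $\phi g\psi=f$. Recall that we compose left to right, so $(x)\phi g\psi=(((x)\phi)g)\psi$.

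The plan is to first pick $\phi$ so that $\phi g$ is injective, and then let $\psi$ collapse the image of $\phi g$ onto $\im(f)$ in the right pattern. First the case $X=\N$. Since $\im(g)$ is infinite, choose $n_0<n_1<\cdots$ with $(n_0)g<(n_1)g<\cdots$, and moreover arrange that each successive value exceeds the previous one by enough room. Concretely, pick $n_k$ inductively so that $(n_k)g-(n_{k-1})g\ge (k)f-(k-1)f$, which is possible because $g$ is unbounded. Set $(k)\phi=n_k$; this is an order injection with infinite image. Put $y_k=(n_k)g$, a strictly increasing sequence.

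Now define $\psi$ on $\N$ as follows. We need $(y_k)\psi=(k)f$ and $\psi$ monotone and surjective. On $[0,y_0]$, let $\psi$ rise from $0$ to $(0)f$, using $\psi(t)=\min(t,(0)f)$; this requires $y_0\ge (0)f$, which we arrange by choosing $n_0$ with $(n_0)g\ge (0)f$. On each interval $[y_{k-1},y_k]$, let $\psi$ increase from $(k-1)f$ to $(k)f$ in unit steps and then stay constant. The room condition on the gaps makes this possible. The resulting $\psi$ is monotone, hits every integer up to $\sup f=\infty$, and is therefore surjective and has infinite image. Then $(k)\phi g\psi=(y_k)\psi=(k)f$. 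Note that surjectivity of $\psi$ does not depend on $\im(f)$: the unit steps cover all intermediate values.

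For $X=\Z$ the same two-sided construction applies. Choose $n_0$ with $(n_0)g$ arbitrary, then choose $n_k$ for $k>0$ and for $k<0$ inductively, going up and going down, so that the gaps between consecutive values $y_k=(n_k)g$ dominate the gaps of $f$. This uses the fact that $\im(g)$ has neither a maximum nor a minimum. Define $\psi$ on each $[y_{k-1},y_k]$ to climb by unit steps from $(k-1)f$ to $(k)f$ and then plateau. Since $f$ has image unbounded in both directions, $\psi$ is surjective onto $\Z$, and both $\phi$ and $\psi$ lie in $\operatorname{End}^\infty(\Z,\leq)$.

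The main point needing care is that the gap condition can always be met. This follows purely from the unboundedness of $g$ in the relevant direction(s): each new $n_k$ is chosen after the previous ones, so we only need $g$ to take arbitrarily large (respectively, arbitrarily small) values. The other point to check is the initial segment $[0,y_0]$ for $\N$, which the choice $(n_0)g\ge(0)f$ handles. Everything else is routine verification of monotonicity.
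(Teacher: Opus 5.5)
Your proposal is correct and is essentially the paper's proof. The paper also chooses $\phi$ so that the gaps of $\phi g$ dominate the gaps of $f$, with $(0)\phi g\geq(0)f$ in the $\N$ case. It then glues order-preserving surjections $[(k)\phi g,(k+1)\phi g]\to[(k)f,(k+1)f]$ (your unit-step-then-plateau maps are one such choice) and handles the initial segment of $\N$ separately.
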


 \begin{proof}
Let $\phi\in \operatorname{End}(X,\leq)$ be an arbitrary endomorphism such that 
\begin{enumerate}
    \item $(0)\phi g \geq (0)f$,
    \item $(n+1)\phi g-(n)\phi g> (n+1)f-(n)f$  for all $n\in X$,
\end{enumerate}
The function $\phi$ can be constructed inductively exploiting the fact that $g\in \operatorname{End}^\infty(X,\leq)$.
Note that in particular \(\phi\) and \(\phi g\) are injective and order preserving.
 
 % Let \(\psi':=(\phi g)^{-1}f\), and note that \((\phi g)\psi'=f\) so we need only show that there is a surjection \(\psi\in \Zendd\) with \(\psi'\subseteq \psi\). 

For all \(n\in X\), let 
\[\psi_n:\makeset{x\in X}{\((n)\phi g\leq x\leq (n+1)\phi g\)}\rightarrow \makeset{x\in X}{\((n)f\leq x\leq (n+1)f\)}\] be an order-preserving surjection.
Note that we must have 
%$$((n)\phi g)\psi_n=(n)f,$$
%and
\[((n+1)\phi g)\psi_n = (n+1)f=((n+1)\phi g)\psi_{n+1}\] for all \(n\in X\).
In particular, \(\psi_n\) and \(\psi_{n+1}\) agree on the unique point in both of their domains.
Thus
\[\psi:=\bigcup_{n\in X} \psi_n\]
is an order-preserving partial function. 
The domain of \(\phi\) is the set of elements of \(X\) which are between points in the image of \(\phi g\), and the image of \(\phi\) is the set of points between points in the image of \(f\).
If \(X=\Z\), then \(\psi\) is a surjective fully defined function, and if \(X=\N\) then, since \((0)\phi g\geq (0)f\), we can extend \(\psi\) to a surjective fully defined function.
Moreover, by construction we have \(\phi g \psi =f\) as required.
\end{proof}

The following fact follows from Corollary~\ref{open D1} and Lemma~\ref{term}.

\begin{proposition}\label{XXD1}
 The monoids $\operatorname{End}^\infty(\N,\leq)$ and $\operatorname{End}^\infty(\Z,\leq)$ endowed with the pointwise topology have property $\mathbb{XX}$ with respect to themselves.  
\end{proposition}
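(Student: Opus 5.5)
We take $A=S$, where $S$ is $\operatorname{End}^\infty(X,\leq)$ with $X\in\{\N,\Z\}$ and the pointwise topology, and check the conditions of \cref{XX} one at a time. For condition (1), $A=S$ is trivially dense and $G_\delta$ in $S$. (For later reference: $S$ is a topological semigroup under the pointwise topology, since composition on $X^X$ is continuous there.)

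For condition (2), fix $(a,s)\in S\times S$. By \cref{term}, applied with $f=s$ and $g=a$, there are an injection $\phi\in S$ and a surjection $\psi\in S$ with $\phi a\psi=s$. We set $b_0=\phi$ and $b_1=\psi$, and define $(x)t_{a,s}=\phi x\psi$, a polynomial of the required form with $n=1$. Then (2.3) holds by construction. For (2.1), $t_{a,s}=l_\phi\circ r_\psi$, the composition of a left and a right shift, and each shift is continuous because $S$ is a topological semigroup.

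For (2.2), since $A=S$ it suffices to show that $t_{a,s}$ is open on $S$. By \cref{open D1}, the left shift $l_\phi$ is open because $\phi$ is an injective element of $S$, and the right shift $r_\psi$ is open because $\psi$ is a surjective element of $S$. A composition of open maps is open, so $t_{a,s}$ is open.

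The only point needing care is that \cref{term} supplies $\phi,\psi$ inside $\operatorname{End}^\infty(X,\leq)$ itself rather than merely inside $\operatorname{End}(X,\leq)$. This is needed because \cref{open D1} concerns shifts as self-maps of $\operatorname{End}^\infty(X,\leq)$, so $\phi$ and $\psi$ must be elements of that monoid. Since \cref{term} does state membership in $\operatorname{End}^\infty(X,\leq)$, there is no real obstacle, and the proposition follows.
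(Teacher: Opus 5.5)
Your proof is correct and is the paper's argument: take $A$ to be the whole monoid, use \cref{term} to write $s=\phi a\psi$ with $\phi$ injective and $\psi$ surjective in $\operatorname{End}^\infty(X,\leq)$, and let $t_{a,s}$ be the composite of the shifts $l_\phi$ and $r_\psi$, which are continuous and, by \cref{open D1}, open. You were also right to check that $\phi,\psi$ lie in $\operatorname{End}^\infty(X,\leq)$ itself, since that is what \cref{open D1} needs.
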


We can now prove the second and final item of \cref{thm:endNi}.

\begin{proposition}\label{maximum}
      The pointwise topology is the unique Polish semigroup topology on $\operatorname{End}^\infty(\N,\leq)$. 
\end{proposition}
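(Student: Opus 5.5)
The plan is to squeeze any Polish semigroup topology \(\Tau\) on \(\operatorname{End}^\infty(\N,\leq)\) between the pointwise topology \(\tau\) from below and from above.

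\emph{Step 1: the pointwise topology is Polish and has property \(\mathbb{XX}\).} I first check that \(\tau\) is a Polish semigroup topology on \(\operatorname{End}^\infty(\N,\leq)\). It is a semigroup topology, being the restriction of the pointwise topology on \(\N^\N\). For Polishness, note that \(\operatorname{End}(\N,\leq)\) is closed in \(\N^\N\). The set of maps with infinite image is
\[\bigcap_{m\in\N}\bigcup_{k\in\N}\{f:(k)f\geq m\},\]
which is a countable intersection of open sets, so it is \(G_\delta\) and hence Polish. By \cref{XXD1}, \((\operatorname{End}^\infty(\N,\leq),\tau)\) has property \(\mathbb{XX}\) with respect to the whole monoid. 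Therefore \cref{cor:XX_main} gives that \(\tau\) is a maximal Polish semigroup topology.

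\emph{Step 2: every Hausdorff semigroup topology contains \(\tau\).} Let \(\Tau\) be any Polish semigroup topology. It is Hausdorff and makes semigroup polynomials continuous. Consequently, each set of the form \(\{x: a_0xa_1\cdots xa_n\neq b_0xb_1\cdots xb_m\}\) is the preimage of the complement of the diagonal under a continuous map into a Hausdorff square, and so it is \(\Tau\)-open. Hence \(\mathfrak Z\subseteq\Tau\). By \cref{magenta}, \(\mathfrak Z=\tau\), so \(\tau\subseteq\Tau\).

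\emph{Step 3: conclude.} There are two equivalent ways to finish.
\begin{itemize}
\item Since \(\tau\subseteq\Tau\) and both topologies are Polish, \cref{prop-borel-measurable} gives \(\mathcal B(\tau)=\mathcal B(\Tau)\). Polish spaces are regular and second-countable, so \cref{cor:XX_sub} applies with \(\Tau_1=\tau\) and \(\Tau_2=\Tau\), giving \(\Tau\subseteq\tau\). Hence \(\Tau=\tau\).
\item Alternatively, \(\tau\subseteq\Tau\) together with the maximality of \(\tau\) from \cref{cor:XX_main} forces \(\Tau=\tau\).
\end{itemize}

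The main obstacle is property \(\mathbb{XX}\) itself, and it is already handled by \cref{XXD1}. That result combines \cref{term} (writing \(s=\phi a\psi\) with \(\phi\) injective and \(\psi\) surjective) with \cref{open D1}, which makes \(x\mapsto\phi x\psi\) open. The only remaining routine points are that \(\operatorname{End}^\infty(\N,\leq)\) is \(G_\delta\) in the pointwise topology and that Polish spaces are regular.
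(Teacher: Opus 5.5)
Your proof is correct and follows the paper's argument. You get $\tau\subseteq\Tau$ from $\mathfrak Z\subseteq\Tau$ together with \cref{magenta}, and then $\Tau=\tau$ from Polishness of $\tau$, \cref{XXD1} and \cref{cor:XX_main}. The only cosmetic differences are that you prove $\mathfrak Z\subseteq\Tau$ directly rather than citing an external result, and you show $\operatorname{End}^\infty(\N,\leq)$ is $G_\delta$ via unboundedness rather than via the countability of its complement in $\operatorname{End}(\N,\leq)$.
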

\begin{proof}
Let \(\Tau\) be a Polish semigroup topology on \(\operatorname{End}^\infty(\N,\leq)\). As \(\Tau\) is a Hausdorff semigroup topology on \(\operatorname{End}^\infty(\N,\leq)\), \cite[Proposition 2.1]{main} implies that \(\Tau\) contains the semigroup Zariski topology on \(\operatorname{End}^\infty(\N,\leq)\). By \cref{magenta}, it follows that \(\Tau\) is contains the pointwise topology. Since $\operatorname{End}(\N,\leq)$ equipped with the pointwise topology is Polish and $\operatorname{End}(\N,\leq)\backslash \operatorname{End}^\infty(\N,\leq)$ is countable, it follows that $\operatorname{End}^\infty(\N,\leq)$ is Polish as well.
Thus \(\Tau\) is equal to the pointwise topology by \cref{cor:XX_main} and \cref{XXD1}.
    \end{proof}

\section{\(\operatorname{End}(\N, \leq)\) and $\Zendd$ }

In this section we prove \cref{thm:endN} and \cref{thm:endZ}. %Item 1 of each of these gives a coarsest Hausdorff semigroup topology on the corresponding semigroup.
The following result was proven in~\cite[Lemma 5.1]{main} and will be used in the proof of \cref{zariski_order} to establish the coarsest Hausdorff semigroup topology on the monoids 
\(\operatorname{End}(\N, \leq)\) and $\Zendd$.
\begin{lemma}
  \label{thm-elliott-1}
  Let $X$ be an infinite set, and let $S$ be a subsemigroup of partial transformation monoid $P_X$ such that
  $S$ contains all of the constant  transformations (defined everywhere on
  $X$), and for every $x\in X$
  there exists $f_{x}\in S$ such that $(x) f_{x} ^ {-1} = \{x\}$ and $(X)f_{x}$
  is finite.  If $\mathcal{T}$ is a shift-continuous topology on
  $S$, then the following are equivalent:
  \begin{enumerate}
    \item \label{thm-elliott-1-i} 
    $\mathcal{T}$ is Hausdorff;
    \item \label{thm-elliott-1-ii} 
    $\mathcal{T}$ is $T_1$;
    \item \label{thm-elliott-1-iii} 
    $\{f\in S: (y, z) \in f\}$ and $\{f \in S :
            t\not\in\dom(f)\}$ are open with respect to
          $\mathcal{T}$ for all $y, z, t \in X$;
    \item \label{thm-elliott-1-iv} 
    $\{f\in S: (y,z)\in f\}$ and $\{f \in S :
            t\not\in\dom(f)\}$ are closed with respect to
          $\mathcal{T}$ for all $y, z, t \in X$.
  \end{enumerate}
\end{lemma}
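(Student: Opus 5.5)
The plan is to prove the cycle $(1)\Rightarrow(2)\Rightarrow(4)\Rightarrow(3)\Rightarrow(1)$. Shift-continuity of $\mathcal T$ will be used to pull back closed sets along the shifts $l_c$ and $r_c$. The two hypotheses are used as follows: the constant maps let us evaluate at a single point, and the maps $f_x$ let us reduce an a priori infinite union to a finite one. For the implication $(1)\Rightarrow(2)$ there is nothing to do.

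For $(3)\Rightarrow(1)$, note that the sets $\{f\in S:(y,z)\in f\}$ and $\{f\in S: t\notin\dom(f)\}$ generate the pointwise topology on $S\subseteq P_X$. This topology is Hausdorff: if $f\neq g$, then either there is some $y$ on which both are defined but take different values, or some $t$ lies in the domain of exactly one of them, and in each case disjoint generating sets separate $f$ and $g$. By $(3)$ the topology $\mathcal T$ is finer than the pointwise topology, so $\mathcal T$ is Hausdorff.

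For $(2)\Rightarrow(4)$, write $c_y$ for the constant map with value $y$. For $f\in S$, the product $c_yf$ is the constant map with value $(y)f$ if $y\in\dom(f)$, and is the empty map $\varnothing$ otherwise.
\begin{itemize}
\item Since $(z)f_z^{-1}=\{z\}$, we have $(y,z)\in f$ if and only if $c_yff_z=c_{(z)f_z}$. Hence $\{f:(y,z)\in f\}$ is the preimage of a singleton under the continuous map $f\mapsto c_y f f_z$, which is a composition of shifts. Singletons are closed because $\mathcal T$ is $T_1$, so this set is closed.
\item Similarly, $\{f: t\notin\dom(f)\}=\{f: c_tf=\varnothing\}$. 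If $\varnothing\notin S$, this set is empty and hence closed. Otherwise it is again the preimage of a closed singleton under a shift, and so it is closed.
\end{itemize}

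The step I expect to be the main obstacle is $(4)\Rightarrow(3)$. Here complements must be shown closed, and the naive decomposition $\{f:(y,z)\notin f\}=\{f:y\notin\dom f\}\cup\bigcup_{z'\neq z}\{f:(y,z')\in f\}$ is an infinite union of closed sets. To handle this, I use the finiteness of the image of $f_z$. Observe that $(y,z)\in f$ if and only if $(y,(z)f_z)\in ff_z$. Therefore
\[
\{f:(y,z)\notin f\}=\{f: y\notin\dom(ff_z)\}\cup\bigcup_{w\in (X)f_z,\ w\neq (z)f_z}\{f:(y,w)\in ff_z\}.
\]
Each set on the right is the preimage under $r_{f_z}$ of a set that is closed by $(4)$. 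The union is finite, so the complement is closed and $\{f:(y,z)\in f\}$ is open.

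Finally, for any fixed $x$ we have $t\in\dom(f)$ if and only if $(t,x)\in fc_x$. So $\{f:t\in\dom(f)\}$ is the preimage under $r_{c_x}$ of a closed set, hence closed. It follows that $\{f:t\notin\dom(f)\}$ is open, which completes $(4)\Rightarrow(3)$.
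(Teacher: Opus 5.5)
Your proof is correct, and it is a complete, self-contained argument. Every step of the cycle $(1)\Rightarrow(2)\Rightarrow(4)\Rightarrow(3)\Rightarrow(1)$ checks out:
\begin{itemize}
\item The hypothesis $(z)f_z^{-1}=\{z\}$ gives $(z)f_z=z$. Hence $(y,z)\in f\iff c_yff_z=c_z$, and also $(y,z)\in f\iff (y,z)\in ff_z$.
\item Since $(X)f_z$ is finite, the complement of $\{f:(y,z)\in f\}$ is a finite union of $r_{f_z}$-preimages of sets that are closed by (4).
\item You correctly treat the case $\varnothing\notin S$ separately.
\item The equivalence $t\in\dom(f)\iff (t,x)\in fc_x$ handles the domain sets.
\end{itemize}

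The paper gives no proof of this lemma; it quotes it from \cite[Lemma~5.1]{main}. So there is no in-paper argument to compare yours against.

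A minor observation: your argument never uses that $X$ is infinite. It only needs a point $x\in X$, and you could simply take $x=t$.
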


\begin{proposition}\label{zariski_order}
    If \((X, \leq)\) is a totally ordered set, then the smallest \(T_1\) shift-continuous topology on \(\operatorname{End}(X, \leq)\) is the pointwise topology. 
\end{proposition}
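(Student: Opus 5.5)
The plan is to apply \cref{thm-elliott-1} with \(S=\operatorname{End}(X,\leq)\), viewed as a subsemigroup of \(P_X\) consisting of total maps. Two facts then need to be established: the pointwise topology is itself a \(T_1\) shift-continuous topology, and every \(T_1\) shift-continuous topology contains it.

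The first fact is routine. The pointwise topology on \(X^X\) is a Hausdorff semigroup topology, and it is inherited by the submonoid. If \(X\) is finite, every topology in question contains the discrete pointwise topology once we know singletons \(\{f\}\) are closed and \(S\) is finite. So in that case any \(T_1\) topology on the finite set \(S\) is discrete, and there is nothing to prove. Hence we assume \(X\) is infinite, as \cref{thm-elliott-1} requires.

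For the second fact we check the hypotheses of \cref{thm-elliott-1}. Every constant map \(X\to X\) is order preserving, so \(S\) contains all constant transformations. For each \(x\in X\) we need \(f_x\in S\) with \((x)f_x^{-1}=\{x\}\) and finite image. This is the only step where the order matters.

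\begin{itemize}
\item If \(x\) has both a strictly smaller and a strictly larger element, pick \(a<x<b\) and define \((y)f_x=a\) for \(y<x\), \((x)f_x=x\), and \((y)f_x=b\) for \(y>x\). This map is order preserving, has image \(\{a,x,b\}\), and satisfies \((x)f_x^{-1}=\{x\}\).
\item If \(x\) is the minimum of \(X\), drop \(a\) and send everything above \(x\) to some \(b>x\), which exists since \(X\) is infinite.
\item If \(x\) is the maximum of \(X\), argue symmetrically.
\end{itemize}

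Now let \(\mathcal T\) be any \(T_1\) shift-continuous topology on \(S\). By the implication (2)\(\Rightarrow\)(3) of \cref{thm-elliott-1}, each set \(\{f\in S:(y,z)\in f\}=\{f\in S:(y)f=z\}=U_{y,z}\) is \(\mathcal T\)-open. Since these sets form the subbasis of the pointwise topology, \(\mathcal T\) contains the pointwise topology. The conditions \(t\notin\dom(f)\) define empty sets here and contribute nothing. Combined with the first fact, the pointwise topology is the smallest \(T_1\) shift-continuous topology.

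The main obstacle is only the construction of \(f_x\) at endpoints, which is handled by the case split above. For \(X=\N\) or \(\Z\), this also gives item (1) of \cref{thm:endN} and \cref{thm:endZ}. Every Hausdorff semigroup topology is \(T_1\) and shift-continuous, so it contains the pointwise topology. The semigroup Zariski topology is \(T_1\) and shift-continuous, and is contained in every Hausdorff semigroup topology, in particular the pointwise one, so it coincides with the pointwise topology.
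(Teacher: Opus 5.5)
Your proposal is correct and follows the paper's proof essentially verbatim. You dispose of finite \(X\) directly, then verify the hypotheses of \cref{thm-elliott-1}: the constant maps, and the map \(f_x\) sending points below \(x\) to a fixed smaller element and points above \(x\) to a fixed larger one, omitting whichever side does not exist. From this you read off that the sets \(U_{y,z}\) are open, and you additionally make explicit that the pointwise topology is itself \(T_1\) and shift-continuous, which the paper leaves implicit.
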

\begin{proof}
    First, if \(X\) is finite then this is obvious. 

    Otherwise  \(\operatorname{End}(X, \leq)\) is a subsemigroup of  \(P_X\) where \(X\) is infinite. So we need only check the conditions of Lemma~\ref{thm-elliott-1}. The constant transformations clearly belong to \(\operatorname{End}(X, \leq)\).
    Moreover if \(x\in X\) then we can define \(f_x\) as follows. If \(x\) is not the largest element of \(X\) then define \(x_b\) to be a fixed larger element (otherwise leave it undefined). If \(x\) is not the smallest element of \(X\) then define \(x_s\) to be a fixed smaller element (otherwise leave it undefined). Then define
    \[(y)f_x=\begin{cases}
        x &y=x;\\
        x_s & y<x;\\
        x_b & y>x.
    \end{cases}\]    
\end{proof}

Since the semigroup Zariski topology is always \(T_1\), shift-continuous, and contained in every Hausdorff semigroup topology, \cref{zariski_order} implies the following.
\begin{corollary}
 If \((X, \leq)\) is a totally ordered set, then the semigroup Zariski topology on \(\operatorname{End}(X, \leq)\) is the pointwise topology.  
\end{corollary}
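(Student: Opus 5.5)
The plan is to combine three facts. The semigroup Zariski topology $\mathfrak Z$ is always $T_1$ and shift-continuous, and by \cite[Proposition 2.1]{main} it is contained in every Hausdorff semigroup topology. \cref{zariski_order} then bounds $\mathfrak Z$ from below by the pointwise topology, and the pointwise topology is itself a Hausdorff semigroup topology, which bounds $\mathfrak Z$ from above.

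\textbf{Step 1: $\mathfrak Z$ is $T_1$.} For $a\neq b$, the set $\set{x}{x\neq b}$ is a basic Zariski-open set containing $a$. It has the required form because constant polynomials are allowed (the case $n=m=0$, with $a_0=x$-free terms); if one prefers to avoid $S^1$, use $1_X$, which lies in $\operatorname{End}(X,\leq)$. So singletons are closed.

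\textbf{Step 2: $\mathfrak Z$ is shift-continuous.} Take a subbasic set $\set{x}{a_0xa_1\cdots xa_n\neq b_0xb_1\cdots xb_m}$ and pull it back along $l_c$. This replaces each occurrence of $x$ by $cx$, which again gives a subbasic set of the same form. The same holds for $r_c$.

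\textbf{Step 3: $\mathfrak Z$ contains the pointwise topology.} By Steps 1 and 2 and \cref{zariski_order}, the pointwise topology $\tau$ is the smallest $T_1$ shift-continuous topology on $\operatorname{End}(X,\leq)$, so $\tau\subseteq\mathfrak Z$.

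\textbf{Step 4: $\mathfrak Z$ is contained in the pointwise topology.} The pointwise topology on any subsemigroup of $X^X$ is a Hausdorff semigroup topology, since composition is pointwise continuous. In it every polynomial map $x\mapsto a_0xa_1\cdots xa_n$ is continuous, so each set where two polynomials disagree is open. Hence $\mathfrak Z\subseteq\tau$.

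Combining Steps 3 and 4 gives $\mathfrak Z=\tau$. The main technical input is \cref{zariski_order}; nothing beyond these routine verifications remains.
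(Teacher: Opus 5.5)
Your proposal is correct and follows the paper's own argument. The Zariski topology is $T_1$ and shift-continuous, so by \cref{zariski_order} it contains the pointwise topology, and it is contained in the pointwise topology because the latter is a Hausdorff semigroup topology. One small slip: in Step~1 the set $\{x : x\neq b\}$ is the case $n=1$, $m=0$ (with $a_0=a_1=1_X$ and $b_0=b$), not $n=m=0$, which would only give $\varnothing$ or the whole monoid.
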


The following two definitions are used to construct Polish topologies for \(\operatorname{End}(\Z,\leq)\) and, in particular, to prove items 2 and 3 of \cref{thm:endZ}. 

\begin{definition}
Consider the  following sets.
For $x, y\in\Z$, let 
\begin{align*}
    B_x^-&=\makeset{f\in \Zendd}{\(x\leq\min(\im(f))\)},\quad\hspace{8mm} B_x^+=\makeset{f\in \Zendd}{\(x\geq\max(\im(f))\)},\\
    C^-&=\makeset{f\in \Zendd}{\(f\) is unbounded below},\ C^+=\makeset{f\in \Zendd}{\(f\) is unbounded above},\\
    U_{x,y}&=\makeset{f\in \Zendd}{\((x, y)\in f\)}.
\end{align*}
\end{definition}

\begin{definition}\label{Def71}
  For \(n\in \N\), let \(\Tau_n\) be the least topology on $\Zendd$ such that the following sets are simultaneously open and closed
  \begin{enumerate}[\rm (a)]
      \item \(U_{x,y}\) for \(x,y\in \Z\);
    \item \(C^{+},C^{-}\);
     \item \(B_{x}^{+}\cap C^{-}\), \(B_{x}^{-}\cap  C^{+}\) for \(x\in \Z\);
    \item the set $D_n$ of elements with image size strictly less than \(n\);
     \item each singleton with finite image size at least \(n\).
  \end{enumerate}
\end{definition}

\begin{lemma}\label{TODO}
    The topologies \(\Tau_n\) are all Polish.
\end{lemma}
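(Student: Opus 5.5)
The plan is to build \(\Tau_n\) from the pointwise topology \(\tau\) on \(\Zendd\), which is Polish, by adding countably many sets, each closed in a Polish topology already obtained, and applying \cref{PolishBuilding} at each stage. A topology in which a countable family of sets is simultaneously open and closed is the topology generated by those sets and their complements. So the task is to show that \(\Tau_n\) is obtained from \(\tau\) by adding countably many sets, in finitely many stages, where each added set is closed in the topology built so far. Adding a set closed in a Polish topology keeps the topology Polish by the second part of \cref{PolishBuilding}. Adding its complement is automatic, since the complement is already open.

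\textbf{Step 1 (items (a) and (d)).} The sets \(U_{x,y}\) are clopen in \(\tau\), so nothing needs to be added. The set \(D_n\) consists of the maps with image size less than \(n\). It is closed in \(\tau\): if \(f\) has at least \(n\) image values, these are witnessed at finitely many points, which gives a basic \(\tau\)-neighbourhood disjoint from \(D_n\). Its complement needs more care. Having image size at least \(n\) is an open condition, so the complement of \(D_n\) is \(\tau\)-open. Hence \(D_n\) is \(\tau\)-closed, and adding \(D_n\) itself gives a Polish topology in which it is clopen.

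\textbf{Step 2 (items (b) and (c)).} Write \(C^+=\bigcap_{k}\bigcup_{x,y\ge k}U_{x,y}\). The complement of \(C^+\) is \(\bigcup_k B_k^+\), where \(B_k^+=\bigcap_x\bigcup_{y\le k}U_{x,y}\) is \(\tau\)-closed. So \(C^+\) is \(G_\delta\) and each \(B_x^+\) is closed in \(\tau\). I first add all \(B_x^+\) and \(B_x^-\) for \(x\in\Z\); these are countably many closed sets. In the resulting Polish topology the complement of \(C^+\) is open, so \(C^+\) is closed, and I add \(C^\pm\) next. Now \(B_x^+\cap C^-\) is an intersection of closed sets, hence closed, and adding it is again allowed.

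There is a subtlety here: \(\Tau_n\) is the \emph{least} topology making the listed sets clopen, and \(B_x^+\) alone is not listed. So I will avoid adding the \(B_x^+\). Instead I will show that \(C^+\) becomes closed after adding only the sets in (c) and a few auxiliary sets that are already open in \(\tau\). The key identity is
\[
\Zendd\setminus C^+=\bigcup_x (B_x^+\cap C^-)\;\cup\;(\text{maps bounded on both sides}).
\]
The bounded maps are those with finite image. Those with image size at least \(n\) become isolated points in item (e), which exist countably; the rest form \(D_n\). The alternative is to show that \(B_x^+\cap C^-\) is already closed in the topology generated by \(\tau\) and \(C^-\), since \(B_x^+\) is \(\tau\)-closed. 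This requires \(C^-\) to be made clopen first. That step fails directly because \(C^-\) is only \(G_\delta\).

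\textbf{Resolution.} Equality of topologies only needs the final generated topology to be Polish. I can therefore use the sandwich principle: if \(\Tau_n\) lies between \(\tau\) and a Polish topology \(\sigma\), and \(\Tau_n\) is itself obtained from some Polish topology by adding closed sets, then \(\Tau_n\) is Polish. Concretely, I will order the additions as follows.
\begin{enumerate}
\item Add \(D_n\) and the singletons in item (e), each \(\tau\)-closed. These are countably many, since maps with finite image are determined by finitely many data.
\item Add the sets \(B_x^+\cap C^-\) and \(B_x^-\cap C^+\). Each is closed once \(C^\mp\) is closed.
\end{enumerate}
The main obstacle is breaking this circularity between \(C^\pm\) and (c). I plan to handle it by checking that \(C^-\) is \(\tau\)-closed relative to the clopen set \(\bigcup_x B_x^+\cup\) (bounded-above part). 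Failing that, I will compute the complement of \(C^-\) as the union of the sets in (c) with \(B^-\), together with \(D_n\) and the finite singletons, and verify that this union is open once (c) is added.

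Finally, the singletons in (e) are closed in any \(T_1\) topology, so adding them preserves Polishness by \cref{PolishBuilding}.
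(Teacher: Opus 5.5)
\textbf{There is a genuine gap: the proposal never shows that the item (c) sets are closed at the moment you adjoin them.} Your overall strategy is the paper's: start from the pointwise topology \(\tau\) and repeatedly adjoin countably many sets that are closed in the current Polish topology and belong to \(\Tau_n\), applying \cref{PolishBuilding} each time. Your final ordering (first \(D_n\) and the singletons of item (e), then the sets of item (c), then \(C^\pm\)) is also salvageable.

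Your stated reason, that \(B_x^+\cap C^-\) is closed ``once \(C^-\) is closed'', is circular. \(C^-\) only becomes closed after the sets \(B_x^-\cap C^+\) have been made open, which is the step being justified. You flag this yourself and leave it open, and neither proposed fix works:
\begin{enumerate}
\item[(i)] The first fix is false. \(C^-\) is not \(\tau\)-closed relative to the bounded-above maps: the constant map \(0\in B_0^+\) is the pointwise limit of the maps in \(B_0^+\cap C^-\) that equal \(0\) on \([-k,\infty)\) and equal \(i\mapsto i+k\) below \(-k\). Also, \(\bigcup_x B_x^+\) is not clopen in \(\tau\).
\item[(ii)] The second fix only shows that \(C^-\) is closed \emph{after} (c) has been adjoined, which presupposes what is missing.
\item[(iii)] The ``sandwich principle'' contributes nothing. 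Lying between two Polish topologies does not make a topology Polish, and in the form you state it, it is just \cref{PolishBuilding} again.
\end{enumerate}

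\textbf{The missing observation} is that the set \(F\) of finite-image maps decouples the (c)-sets from \(C^\pm\). A bounded-above map is unbounded below if and only if its image is infinite, so \(B_x^+\cap C^-=B_x^+\setminus F\). Likewise \(B_x^-\cap C^+=B_x^-\setminus F\). After your step (1), \(F=D_n\cup\bigcup\{\{f\}: n\le|\im(f)|<\aleph_0\}\) is open. Hence each (c)-set is the intersection of a \(\tau\)-closed set \(B_x^\pm\) with the closed set \(\Zendd\setminus F\), and may be adjoined.

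After that, \(\Zendd\setminus C^+=F\cup\bigcup_x(B_x^+\cap C^-)\) is open. So \(C^+\), and symmetrically \(C^-\), is closed and may be adjoined. Every adjoined set lies in \(\Tau_n\), so the resulting Polish topology is exactly \(\Tau_n\). The paper uses the same device, phrased as first making \(C^+\cup C^-=\Zendd\setminus F\) clopen and then noting that \(B_x^+\cap(C^+\cup C^-)=B_x^+\cap C^-\).
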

\begin{proof}
Let \(n\in \N\).
We show \(\Tau_n\) is Polish by building a sequence of topologies on \(\Zendd\) starting with the pointwise topology (which is Polish), and at each stage we add countably many open sets which are closed in the previous topology and apply \cref{PolishBuilding} to conclude the new topology is Polish.
At each step we make closed sets open (and hence clopen) and each of these sets belongs to \(\Tau_n\).
The sequence will conclude with a topology containing all the sets defining \(\Tau_n\) so the result follows.

\begin{enumerate}
    \item 
     The pointwise topology (generated by the sets \(U_{x,y}\)) is Polish.
     \item Next add the singletons \(\makeset{\{f\}}{\(f\) has finite image size at least \(n\)}\).
     \item  Next add the complement of the open set \(\makeset{f\in \Zendd}{\(f\) has finite image size at least \(n\)}\). Note that $C^+ \cup C^-=\{f\in \Zendd: |\im(f)|=\aleph_0\}$. The following formula gives another representation of the set $C^+ \cup C^-$ which will be used to show that $C^+ \cup C^-$ is open. 
     \[C^+ \cup C^-=\makeset{f\in \Zendd}{\(|\im(f)|\geq n\)  and (\(|\im(f)|=\aleph_0\) or \(|\im(f)|<n\))}.\]
     Observe that $\{f\in \Zendd: |\im(f)|\geq n\}$ is open, as it is open in the pointwise topology. Also note that $\{f\in \Zendd: |\im(f)|=\aleph_0\ \hbox{ or } \im(f)|<n\}$ is open, as it equals the complement of the set \(\makeset{f\in \Zendd}{\(f\) has finite image size at least \(n\)}\), which we recently announced open.
     
     \item Next add the complement of the set \(C^+\cup C^-\). Since the sets $B_x^+$ and $B_x^-$ are closed with respect to the pointwise topology, we get that \(B_{x}^{+}\cap (C^{+}\cup C^{-})=B_{x}^{+}\cap C^{-}\) and \(B_{x}^{-}\cap (C^{+}\cup C^{-})=B_{x}^{-}\cap C^{+}\) are now closed for all \(x\in \Z\).
     \item  Next add the sets \(B_{x}^{+}\cap C^{-}\), \(B_{x}^{-}\cap C^{+}\) for all \(x\in \Z\).
     \item At this point the sets \(B_{x}^+, B_{x}^-\) have open intersection with the clopen set \(C^+\cup C^-\). Thus removing any family of these sets from \(C^+\cup C^-\) yields a closed set.
    As such, we can finally add the sets \(C^{+}=(C^+ \cup C^-)\setminus \bigcup_{x\in \Z}{B^{+}_x}\) and \(C^{-}=(C^+ \cup C^-)\setminus \bigcup_{x\in \Z}{B^{-}_x}\). \qedhere
\end{enumerate}
\end{proof}

\begin{lemma}\label{*}
  For all \(n\in \N\),  \((\Zendd, \Tau_n)\) is a Polish topological semigroup.
\end{lemma}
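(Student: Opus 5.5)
Polishness is \cref{TODO}, so only joint continuity of composition $\Zendd\times\Zendd\to\Zendd$ with respect to $\Tau_n$ needs proof. The plan is to express $\Tau_n$ as generated by finitely many semigroup topologies, or topologies pulled back along homomorphisms, plus the pointwise topology. Where that is not directly possible, we check directly: for each generating clopen set $W$ from \cref{Def71} and each pair $(f,g)$ with $fg\in W$, we find $\Tau_n$-neighbourhoods $N_f\ni f$, $N_g\ni g$ with $N_fN_g\subseteq W$. Because the generators are clopen, complements need no separate treatment.

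\textbf{Pointwise sets $U_{x,y}$.} These are handled as in the pointwise topology. If $(x)fg=y$, then $U_{x,(x)f}\cdot U_{(x)f,y}\subseteq U_{x,y}$.

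\textbf{The sets $D_n$ and the singletons.} The map $f\mapsto|\im(f)|$ satisfies $|\im(fg)|\le\min(|\im f|,|\im g|)$. So $D_n$ is an ideal, and $\Tau^*$-style reasoning as in \cref{Pol2} applies: $\Zendd\setminus D_n$ is not countable, but the elements of finite image size at least $n$ together with the ideal $D_n$ behave like a Rees quotient. Concretely:
\begin{itemize}
\item If $fg\in D_n$, then one factor is already in $D_n$, or both have image size at least $n$. In the latter case, if $f$ has finite image it is isolated, and if $f$ has infinite image, the finitely many pointwise values of $f$ on $((\im(f))g$-fibres determining the collapse give a pointwise neighbourhood. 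In either case $\{f\}\times N_g$ or $N_f\times\{g\}$ maps into $D_n$, using that $|\im(fg)|$ is determined by $g$ restricted to the convex hull of $\im f$ when that hull is finite.
\item If $fg=h$ is a singleton of finite image size $m\ge n$, the same analysis runs with ``image size exactly $m$ and equal to $h$''. Since $\im(h)$ is finite, $g$ is constant outside a finite window on $\im f$. We must ensure that perturbing $f$ cannot push its image outside the region where $g$ is constant, and that perturbing $g$ cannot change it on the relevant region.
\end{itemize}
This is where $C^\pm$ and $B^\pm_x\cap C^\mp$ are needed. If $f$ is unbounded above, say, then $fg$ bounded above means $g$ is eventually constant $\le$ some $x$, i.e.\ $g\in B^+_x$ intersected appropriately. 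Membership of $g$ in $B^+_x\cap C^-$ (or $g$ having finite image, isolated or controlled by pointwise values) then controls $g$ globally.

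\textbf{The sets $C^\pm$ and $B^\pm_x\cap C^\mp$.} Note that $fg\in C^+$ iff $f\in C^+$ and $g\in C^+$, so $C^+$ is a prime-type condition; the case with $g$ unbounded above but $f$ bounded does not give $C^+$. Hence $C^+\times C^+$ is a neighbourhood, and for the complement we split into $f\notin C^+$ or $g\notin C^+$. The subcase $f\in C^+$, $g\notin C^+$ is the delicate one. There $g$ is bounded above by $\max\im(g)$; if $g\in C^-$ we use $B^+_{x}\cap C^-$, and if $g$ has finite image we use pointwise values or isolation. We expect the main obstacle to be this case analysis for $B^\pm_x\cap C^\mp$ and the singletons, specifically the case where $f$ is bounded above and $g$ is unbounded. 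There $\max\im(fg)=(\max\im f)g$, which is controlled by the clopen set $B^+_{\max\im f}$, together with the pointwise value of $g$ at $\max\im f$. We would systematically enumerate the cases for each of $f$ and $g$: infinite in both directions, bounded on one side, or finite image.
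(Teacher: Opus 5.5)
Your proposal has a genuine gap at the core of the lemma: showing that $(D_n)\circ^{-1}$ and $(\{h\})\circ^{-1}$, for $n\leq|\im(h)|<\aleph_0$, are open. There the one concrete claim you make is false, and the rest is deferred.

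You assert that if both factors have image size at least $n$, a neighbourhood of the form $\{f\}\times N_g$ or $N_f\times\{g\}$ works, with $N_f$ pointwise when $\im(f)$ is infinite. Take $(z)f=\min(z,0)$ and $(z)g=\max(z,0)$. Then $fg$ is the constant map $0$. It lies in $D_n$ for $n\geq 2$, and it is an isolated point from \cref{Def71}(e) for $n\leq 1$. Both factors have infinite image, so neither is isolated in $\Tau_n$, and no neighbourhood of your form exists.

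Pointwise control does not help either. Redefine $f$ to be the identity on $(k,\infty)$, or $g$ to be the identity on $(-\infty,-k)$. For large $k$ this respects any prescribed finite window, but the product now has infinite image. What works is bounding both factors on one side simultaneously. For example, $W_f=C^-\cap B^+_0\cap U_{0,0}$ and $W_g=C^+\cap B^-_0\cap U_{0,0}$ satisfy $W_fW_g=\{fg\}$. This is case (iv) of the paper's proof, and (v) is its mirror.

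Likewise, a pointwise $N_f$ is too large when $g$ has finite image but $f$ is bounded on exactly one side. Then $N_f$ must include $C^-\cap B^+_{\max\im(f)}$ (or the mirror). Otherwise $f'$ can exceed $\max\im(f)$ and pick up values of $g$ outside $\im(fg)$. This is case (iii)(c),(d).

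Your last paragraph names the right sets, but it applies them to one factor at a time and leaves the enumeration undone. So the proposal stops exactly where the work lies. The paper completes it by sorting $(f,g)$ according to whether each factor is unbounded below and/or above. It discards the seven combinations that force $fg$ to be unbounded and gives explicit product neighbourhoods for the remaining nine.

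Three smaller points.
\begin{enumerate}
\item ``Because the generators are clopen, complements need no separate treatment'' is not a valid inference. Clopenness of $W$ in $\Tau_n$ says nothing about $(W)\circ^{-1}$ before continuity is known. The correct reason is different: every set in \cref{Def71} is closed in the join of the pointwise topology with the topology pulled back from the discrete four-element semilattice via $s\mapsto(s\text{ unbounded below}, s\text{ unbounded above})$. That join is a semigroup topology contained in $\Tau_n$.
\item $B^+_x$ by itself is not open in $\Tau_n$ for $n\geq 3$, so you need $B^+_{\max\im(f)}\cap C^-$ where you wrote $B^+_{\max\im f}$.
\item The complement of $C^+$ is not delicate. $\Zendd\times(\Zendd\setminus C^+)$ is already an open set mapped into $\Zendd\setminus C^+$.
\end{enumerate}
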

\begin{proof}
Let \(n\in \N\) be fixed.
By Lemma~\ref{TODO} the topology \(\Tau_n\) is Polish.
We next show that the preimages under the semigroup operation \(\circ\) on $\Zendd$ of the sets from \cref{Def71} are open. This will imply that  \(\Tau_n\) is a semigroup topology. First note that the pointwise topology is a semigroup topology. So for every $x,y\in\Z$ the preimage under multiplication $(U_{x,y})\circ^{-1}=\{(f,g):fg\in U_{x,y}\}$ is open.
We have now shown that the sets from part (a) of \cref{Def71} have open preimages.

Next consider the discrete topological semigroup \(S:=\{\operatorname{true}, \operatorname{false}\}{\times}\{\operatorname{true}, \operatorname{false}\}\) with the operation of pointwise ``and". 
The map \(\phi: \Zendd \to S\) defined by 
\[(s)\phi=(s\text{ is unbounded below}, s\text{ is unbounded above})\]
is a semigroup homomorphism. The preimage of the discrete topology under \(\phi\) is generated by \(C^+, C^-\) and their complements. Thus the topology generated by these sets and their complements is also a semigroup topology on \(\Zendd\). Thus the preimages under $\circ$ of \(C^+,C^-\) and their complements are open with respect to \(\Tau_n\) (or indeed any topology containing \(C^+,C^-\) and their complements). We have now shown that the sets from part (b) of \cref{Def71} have open preimages.
 
Let \(x\in \mathbb{Z}\). Since \(B_x^+\) is closed in the pointwise topology (and hence has closed preimage) and the preimage of \(C^+\) is closed, it follows that the preimage of \(B_x^+\cap C^+\) is also closed. 
That is to say that the preimage of the complement of \(B_x^-\cap C^{+}\) under \(\circ\) is open. We now show that the preimage of \(B_x^-\cap C^{+}\) is open.
If \(f,g\in \Zendd\) and \(\im(fg)\) is bounded below by \(x\) and unbounded above, then \(\im(f), \im(g)\) are unbounded above and at least one of the following conditions holds
    \begin{itemize}
        \item  \(\min(\im(f))\) is defined and \((\min(\im(f)))g \geq x\);
        \item \(\min(\im(g))\) is defined and is at least \(x\).
    \end{itemize}
    In the first case if \((z)f=y=\min(\im(f))\), then  \[f\circ g\in ((B_{y}^-\cap U_{z,y}\cap C^+)\times (U_{y,(y)g}\cap C^+))\circ \subseteq B_x^-\cap C^+.\] In the second case \[f \circ g\in (C^+ \times (B_{x}^-\cap C^+))\circ \subseteq B_x^-\cap C^+.\]
    The formulas above imply that $(B_x^-\cap C^+)\circ^{-1}$ is open.
    The fact that the set \((B_x^+\cap C^{-})\circ^{-1}\) is clopen follows by a symmetric argument. We have now shown that the sets from part (c) of \cref{Def71} have open preimages.

    We need to show that the set 
    \[Q_n:=\makeset{(a, b)\in \Zendd{\times}\Zendd}{\(|\im(a\circ b)|<n\)}=(D_n)\circ^{-1}\] is open (see item (d) of \cref{Def71}).
    Let \((a, b)\in Q_n\).
    We are going to show that \(Q_n\) is a neighborhood of \((a, b)\). First assume that \(a\) has image size at most \(n-1\). Then $D_n$ is an open neighborhood of $a$ in $\Tau_n$. Since $D_n$ is a two-sided ideal in $\Zendd$, it is easy to see that in this case $(a,b)\in D_n{\times}\Zendd\subseteq Q_n$. If \(b\) has image size at most \(n-1\), then $(a,b) \in \Zendd{\times} D_n\subseteq Q_n$. Hence if $a$ or $b$ belongs to $D_n$, then $Q_n$ is a neighborhood of $(a,b)$. 

    To prove the continuity of \(\circ\) at the remaining elements of \(Q_n\) as well as on the preimages of the singletons from \(\makeset{\{f\}}{\(n\leq|\im(f)|<\aleph_0\)}\) (see item (e) of \cref{Def71}), we show that if \(a,b\in \Zendd\) have image size at least \(n\) and \(ab\) has finite image then \((ab)\circ^{-1}\) is open. This will show that the preimages of the sets from items (d) and (e) in \cref{Def71} are open. 
    
    There are 16 cases (many of which will be covered simultaneously) depending on the value of \(((a)\phi, (b)\phi)\). For shorter notation, let \(a_0,a_1,b_0,b_1\in\{\operatorname{true},\operatorname{false}\}\) be such that \[((a)\phi, (b)\phi)=((a_0,a_1),(b_0,b_1)).\] Below we outline five arguments which cover all the 16 cases.
    
    \begin{enumerate}[\rm(i)]
        \item If \(a_0=b_0=\operatorname{true}\) or \(a_1=b_1=\operatorname{true}\), then as \((a)\phi(b)\phi = (ab)\phi = (\operatorname{false}, \operatorname{false})\), we reach a contradiction. So these cases do not occur (this covers 7 cases);
         \item  If \(a_0=a_1=\operatorname{false}\), then \(a\) is isolated. Let \(U\) be the set of all endomorphisms which agree with \(b\) on the finitely many points in the image of \(a\). Since the pointwise topology on $\Zendd$ is contained in $\Tau_n$, the set $U$ is an open neighborhood of $b$. Then \((\{a\}\times U)\circ\subseteq \{ab\}\) (we have now covered 11 cases); 
        \item  If \(b_0=b_1=\operatorname{false}\), then \(b\) is isolated.
        Moreover there is \(k\in \N\) such that \(b\) is constant on each of the sets \(\makeset{m\in \Z}{\(m\geq k\)}\) and \(\makeset{m\in \Z}{\(m\leq -k\)}\).
        Define a neighbourhood \(U\) of \(a\) according to the following cases:
        \begin{enumerate}
            \item if \(a_0=a_1=\text{false}\), then \(U=\{a\}\);
            \item if \(a_0=a_1=\text{true}\), then let \(l,r\in \Z\) be such that \((l)a<-k\leq k<(r)a\). Since the pointwise topology on $\Zendd$ is contained in $\Tau_n$, the set $U$ consisting of all endomorphisms which agree with \(a\) on \(\{l,l+1,\ldots,r-1,r\}\) is open;
            \item if \(a_0=\text{true}\) and \(a_1=\text{false}\), then let \(l,r\in \Z\) be such that \((l)a<-k\) and \((r)a=\max(\im(a))\).
            Let \(U\) be the set of endomorphisms which are unbounded below, have \(\max((r)a)\) as the largest point in their image and agree with \(a\) on \(\{l,l+1,\ldots,r-1,r\}\). Since $U=C^-\cap B_{(r)a}^+\cap \bigcap_{l\leq x\leq r}U_{x,(x)a}$, it is open;
            \item if \(a_0=\text{false}\) and \(a_1=\text{true}\), then define \(U\) analogously to the previous case.
        \end{enumerate}
        In all these subcases, the elements of \(U\) agree with \(a\) on all integers which \(a\) maps into \([-k,k]\). It follows that \((U\times \{b\})\circ\subseteq\{ab\}\) (we have now covered 14 cases);
       \item  Assume that  \(a_1=b_0=\operatorname{false}\) and \(a_0=b_1=\operatorname{true}\). Let $x=\max(\im(a))$ and $y=\min(\im(b))$. Fix $k\in\N$ large enough that \((k)a=x\) and \((-k)ab=y\). 
       Let \(W_a\) be the set of maps which are unbounded below, agree with \(a\) on the interval from \(-k\) to \(k\) and which have \(x\) as their largest image value.
       Let \(W_b\) be the set of maps which are unbounded above, agree with \(b\) on the interval from \((-k)a\) to \(x\) and have \(y\) as their smallest image value. It follows that \((W_a\times W_b)\circ\subseteq \{ab\}\) as required (we have now covered 15 cases).
     \item The last case is when  \(a_0=b_1=\operatorname{false}\) and \(a_1=b_0=\operatorname{true}\). The argument is analogous to the one from item (iv).
 \end{enumerate}
 Hence for each $n\in\N$, \(\Tau_n\) is a Polish semigroup topology on $\Zendd$.
\end{proof}

The following lemma implies that the topologies $\Tau_n$, $n\in\N$ are all different.

\begin{lemma}\label{distict polish Z}
    If \(n\in \N\) and \(f\in \Zendd\) has finite image, then \(f\) is an isolated point of \(\Tau_n\) if and only if \(|\im(f)|\geq n-1\).
    Moreover if \(S:=\makeset{f\in \Zendd}{\((x)f=-1\) for \(x<0\) and \((\N)f\subseteq \N\)}\) and \(f\in S\) has finite image, then \(f\) is an isolated point of \(\Tau_n|_S\) if and only if \(|\im(f)|\geq n-1\).
\end{lemma}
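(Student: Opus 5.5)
The plan is a direct analysis of neighbourhoods in $\Tau_n$. By \cref{Def71}, $\Tau_n$ is the least topology in which the sets listed in (a)--(e) are clopen. So a subbasis for $\Tau_n$ consists of those sets together with their complements. I will treat the two directions separately, in each case exploiting that a map with finite image is constant on a left ray and on a right ray of $\Z$.

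\emph{Isolatedness when $|\im(f)|\geq n-1$.} If $|\im(f)|\geq n$, then $\{f\}$ is open by (e). So assume $|\im(f)|=n-1$. Then $f$ is bounded, so $f\in D_n\setminus(C^+\cup C^-)$. Choose $k\in\N$ such that $f$ is constant on $\{x: x\geq k\}$ and on $\{x: x\leq -k\}$; in particular $f{\restriction}_{[-k,k]}$ attains all $n-1$ values of $\im(f)$. Let
\[V:=D_n\cap\bigcap_{-k\leq x\leq k}U_{x,(x)f}.\]
This set is $\Tau_n$-open and contains $f$. I claim $V=\{f\}$. Let $g\in V$. Then $\im(g)\supseteq\im(f)$ and $|\im(g)|<n$, hence $\im(g)=\im(f)$. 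For $x>k$, monotonicity gives $(x)g\geq (k)g=\max\im(f)$. Since $(x)g\in\im(f)$, we get $(x)g=\max\im(f)=(x)f$. The case $x<-k$ is symmetric, using the minimum. Hence $g=f$.

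For the ``moreover'' part I will simply note that $V\cap S=\{f\}$ whenever $f\in S$, so $f$ is isolated in $\Tau_n|_S$ as well.

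\emph{Non-isolatedness when $|\im(f)|\leq n-2$.} Let $M=\max\im(f)$ and take $k$ as above. For $m\geq k$, define $f_m$ by setting $(x)f_m=(x)f$ for $x<m$ and $(x)f_m=M+1$ for $x\geq m$. Each $f_m$ is an endomorphism and $f_m\neq f$. It has image of size at most $n-1<n$, so $f_m\in D_n$. It is also bounded, so it lies outside $C^\pm$ and outside every set of type (c). Finally, no singleton from (e) equals $\{f_m\}$.

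Consequently $f$ and every $f_m$ belong to exactly the same sets of types (b)--(e) and their complements. Note that the complement of a singleton $\{g\}$ with $|\im(g)|\geq n$ contains both $f$ and $f_m$. Now take any basic open neighbourhood of $f$, i.e.\ a finite intersection of subbasic sets. Its type-(a) part is a pointwise-open condition on finitely many points, and $f_m\to f$ pointwise, so for large $m$ the map $f_m$ satisfies that part. The remaining conditions are satisfied by $f_m$ automatically. Hence every neighbourhood of $f$ contains some $f_m\neq f$, and $f$ is not isolated.

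For the statement about $S$: if $f\in S$, then $M\in\N$ and $m\geq k\geq0$, so each $f_m$ still sends negative integers to $-1$ and $\N$ into $\N$. Thus $f_m\in S$, and the same argument works in $\Tau_n|_S$.

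The only delicate point I expect is being careful that the complements of the singletons in (e), and of the sets in (c), cannot separate $f$ from the $f_m$. This is exactly why the perturbation keeps the image finite and of size below $n$.
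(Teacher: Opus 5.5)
Your proof is correct and follows essentially the same route as the paper. For $|\im(f)|=n-1$ you use the same neighbourhood $D_n\cap\bigcap_{|x|\leq k}U_{x,(x)f}=\{f\}$. For $|\im(f)|\leq n-2$ you use a sequence converging pointwise to $f$, obtained by redefining $f$ on a right ray to a new value outside $\im(f)$; the paper uses the value $k$ for $i>k$, whereas you use $M+1$ for $x\geq m$. Your explicit check that the subbasic sets of types (b)--(e) and their complements cannot separate $f$ from $f_m$ simply spells out the paper's unproved assertion that the sets $U_k$ form a neighbourhood base at $f$.
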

\begin{proof}
Suppose that \(f\in \Zendd\) and \(|\im(f)|\geq n-1\). If \(|\im(f)|\geq n\), then by \cref{Def71}(e), \(f\) is an isolated point in \((\Zendd,\Tau_n)\).
If \(\im(f)=n-1\), then let \(k\in \N\) be such that \((k)f=\max(\im(f))\) and \((-k)f=\min(\im(f))\). It follows from \cref{Def71}(d) that
\[\{f\}=\makeset{g\in \Zendd}{\(g\) has image size strictly less than \(n\) and \((i)g=(i)f\) whenever \(|i|\leq k\)}\]
is open with respect to \(\Tau_n\).

Suppose now that \(f\in \Zendd\) and \(|\im(f)|<n-1\).
By \cref{Def71},  the sets
\[U_k:=\makeset{g\in \Zendd}{\(|\im(g)|\leq n-1\) and \((i)g=(i)f\) whenever \(|i|\leq k\)}\]
for \(k\in \N\) form a neighbourhood base at \(f\) with respect to \(\Tau_n\).
For each \(k\in \N\) with \(k> \max(\im(f))\), let \(f_k\in \Zendd\) be defined by
\[(i)f_k=\begin{cases}
    k & i>k\\
    (i)f & i\leq k.
\end{cases}\]
As \(f_k\in U_k\backslash \{f\}\) for all \(k> \max(\im(f))\), it follows that \(f\) is not isolated with respect to $\Tau_n$. 

The arguments above imply that each $f\in S$ with $|\im(f)|\geq n-1$ is isolated in $(\Zendd,\Tau_n)$. Thus $f$ is also isolated in $S$ equipped with the subspace topology.
If \(f\in S\) is such that $|\im(f)|< n-1$, then the defined above endomorphisms \(f_k\) for $k\in \N$ belong to $U_k\cap S$.  So \(f\) is not isolated with respect to \(\Tau_n|_S\).
\end{proof}
\begin{corollary}\label{infinite polish Z}
    The monoid \(\Zendd\) admits infinitely many Polish semigroup topologies.
\end{corollary}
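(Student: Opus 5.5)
The corollary should follow by combining \cref{*} and \cref{distict polish Z}. By \cref{*}, each \(\Tau_n\) with \(n\in\N\) is a Polish semigroup topology on \(\Zendd\). It therefore suffices to show that the map \(n\mapsto \Tau_n\) takes infinitely many distinct values. I will show it is injective on \(\{n\in\N: n\geq 2\}\); a weaker but still sufficient claim would be that it is injective on any infinite subset of \(\N\).

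To separate the topologies, I will use the characterisation of isolated points among endomorphisms with finite image from \cref{distict polish Z}. Fix \(2\leq m<n\) and choose \(f\in\Zendd\) with \(|\im(f)|=m-1\geq 1\). For instance, take a non-decreasing step function with exactly \(m-1\) values, which is clearly order preserving.

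By \cref{distict polish Z}, \(f\) is isolated in \(\Tau_m\), since \(|\im(f)|\geq m-1\). On the other hand, \(f\) is not isolated in \(\Tau_n\), since \(|\im(f)|=m-1<n-1\). Hence \(\{f\}\in\Tau_m\setminus\Tau_n\), so \(\Tau_m\neq\Tau_n\). This yields infinitely many pairwise distinct Polish semigroup topologies \(\{\Tau_n:n\geq 2\}\).

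There is no real obstacle, since all the work lives in \cref{*} and \cref{distict polish Z}. The only point to check is that the chosen \(f\) genuinely has the prescribed finite image size and lies in \(\Zendd\). This is immediate, for example with \((x)f=\min(\max(x,0),m-2)\).
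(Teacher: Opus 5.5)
Your proposal is correct and is exactly the paper's argument: it combines \cref{*} (each $\Tau_n$ is a Polish semigroup topology) with the isolated-point characterisation in \cref{distict polish Z} to conclude that the $\Tau_n$ are pairwise distinct for $n\geq 2$. You additionally make the separating witness explicit, namely a step function $f$ with $|\im(f)|=m-1$, which is isolated in $\Tau_m$ but not in $\Tau_n$ for $n>m$.
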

\begin{proof}
    By \cref{*}, each of the topologies \(\Tau_n\) makes \(\Zendd\) a Polish topological semigroup. \cref{distict polish Z} implies that these topologies are distinct for \(n\geq 2\).
\end{proof}

\begin{corollary}\label{infinite polish N}
    The monoid \(\operatorname{End}(\N,\leq)\) admits infinitely many Polish semigroup topologies.
\end{corollary}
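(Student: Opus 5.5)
The plan is to transfer the topologies \(\Tau_n\) from \(\Zendd\) to \(\Endd\) through an isomorphism onto the subsemigroup \(S\) from \cref{distict polish Z}. Concretely, I would define \(\theta:\Endd\to S\) by \((x)(f)\theta=(x)f\) for \(x\geq 0\) and \((x)(f)\theta=-1\) for \(x<0\). For the reverse direction, note that every \(g\in S\) maps \(\N\) into \(\N\) and is constant \(-1\) on the negatives. Such a \(g\) is order preserving, because \(-1\leq (0)g\).

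First I would verify that \(\theta\) is a semigroup isomorphism. It is clearly a bijection, with inverse \(g\mapsto g{\restriction}_\N\). For \(f,h\in\Endd\), the product \((f)\theta(h)\theta\) agrees with \((fh)\theta\) on \(\N\), since \((f)\theta\) maps \(\N\) into \(\N\). On negative \(x\), both products give \(-1\): \((x)(f)\theta(h)\theta=(-1)(h)\theta=-1\). Hence \(\theta\) is an isomorphism, and it preserves image sizes up to the added point \(-1\), i.e. \(|\im((f)\theta)|=|\im(f)|+1\).

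Next I would check that \(S\) is closed in \((\Zendd,\Tau_n)\). It is closed already in the pointwise topology, being the intersection of the closed sets \(U_{x,-1}\) for \(x<0\) with the sets \(\{g:(x)g\geq 0\}\) for \(x\geq 0\), each of which is a union of basic clopen sets and hence closed in the pointwise topology. Since \(\Tau_n\) contains the pointwise topology, \(S\) is closed in \(\Tau_n\), so \(\Tau_n|_S\) is Polish. It is also a semigroup topology on the subsemigroup \(S\) by \cref{*}. Pulling back along \(\theta\), the topology \(\sigma_n:=\{(U)\theta^{-1}:U\in\Tau_n|_S\}\) is a Polish semigroup topology on \(\Endd\).

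Finally I would show that the \(\sigma_n\) are pairwise distinct, which is where \cref{distict polish Z} is needed. Under \(\sigma_n\), an element \(f\) with finite image is isolated if and only if \((f)\theta\) is isolated in \(\Tau_n|_S\), which happens if and only if \(|\im(f)|+1\geq n-1\). Therefore, for \(2\leq n<m\), any \(f\) with \(|\im(f)|=m-3\) (for \(m\geq 3\)) is isolated in \(\sigma_n\) but not in \(\sigma_m\). So the topologies \(\sigma_n\) are distinct for \(n\geq 2\), giving infinitely many Polish semigroup topologies on \(\Endd\). The only mildly delicate step is the closedness of \(S\), which, as shown above, already follows from the pointwise topology.
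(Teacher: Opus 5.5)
Your proof follows the paper's argument: identify \(\Endd\) with the subsemigroup \(S\subseteq\Zendd\), observe that \(S\) is closed in the pointwise topology and hence in each \(\Tau_n\), restrict the Polish semigroup topologies \(\Tau_n\) to \(S\), and separate the restrictions using the isolated-point criterion of \cref{distict polish Z}.

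There is one slip in the final step. For the pair \(n=2\), \(m=3\) you would need some \(f\in\Endd\) with \(|\im(f)|=0\), and no such \(f\) exists. In fact \(\sigma_2=\sigma_3\). Every element of \(S\) has image size at least \(2\), so all finite-image points are isolated in both topologies. On the clopen set \(C^+\cup C^-\), the sets from \cref{Def71}(d),(e) contribute nothing, so the two topologies agree there as well.

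The correct claim is therefore that the \(\sigma_n\) are pairwise distinct for \(n\geq 3\), which is what the paper states. Your own computation proves this: for \(3\leq n<m\), take \(|\im(f)|=m-3\geq 1\). So the conclusion that there are infinitely many such topologies still stands.

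A minor point on the closedness of \(S\): a union of basic clopen sets need not be closed. The set \(\{g:(x)g\geq 0\}\) is closed because its complement \(\bigcup_{y<0}U_{x,y}\) is open.
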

\begin{proof}
Note that the subsemigroup \(S\) of \(\Zendd\) defined in \cref{distict polish Z} is isomorphic to \(\operatorname{End}(\N,\leq)\). The set
\[\Zendd \backslash S= \makeset{g\in \Zendd}{\((0)g <0\) or there is \(x<0\) with \((x)g\neq -1\)}\]
is open with respect to the pointwise topology.
Thus $S$ is closed in $\Zendd$ and by \cref{*}, each of the topologies \(\Tau_n|_S\) turn \(\operatorname{End}(\N,\leq)\) into a Polish topological semigroup. By \cref{distict polish Z}, these topologies are distinct for \(n\geq 3\).
\end{proof}

We now identify the largest Polish topology for each monoid.

\begin{definition}
    Let \(\Tau_{\max}\) denote the topology on \(\operatorname{End}(\N,\leq)\) generated by the pointwise topology as well as the set \(\operatorname{End}^\infty(\N,\leq)\) and all the singletons from \(\operatorname{End}(\N,\leq)\backslash\operatorname{End}^\infty(\N,\leq)\).
\end{definition}
\begin{theorem}\label{maxN}
  The finest Polish semigroup topology on \(\operatorname{End}(\N,\leq)\) is \(\Tau_{\max}\).
\end{theorem}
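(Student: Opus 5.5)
We prove two things: first that $\Tau_{\max}$ is a Polish semigroup topology, and then that every Polish semigroup topology $\Tau$ on $\Endd$ satisfies $\Tau\subseteq\Tau_{\max}$.

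\emph{$\Tau_{\max}$ is Polish.} Write $J:=\Endd\setminus\operatorname{End}^\infty(\N,\leq)$, the set of maps with finite image. This $J$ is a two-sided ideal, and it is countable: a map with finite image is eventually constant, so it is determined by finitely many values. By \cref{Pol2} applied to this ideal, the topology generated by the pointwise topology, the set $J$ and all singletons outside $J$ is a Polish semigroup topology. That is the wrong way round, since we want the singletons of $J$ open, and $\operatorname{End}^\infty(\N,\leq)$ open.

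So we argue directly. Adding the countably many singletons $\{f\}$ with $f\in J$, which are closed in the pointwise topology, keeps the topology Polish by \cref{PolishBuilding}. After this step $J$ is open, so $\operatorname{End}^\infty(\N,\leq)$ is closed, and adding it as an open set is again allowed by \cref{PolishBuilding}.

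For continuity of multiplication we use the Rees quotient map $\phi\colon\Endd\to Q$, where $Q=\Endd/J$. We need the preimages of the new open sets under multiplication to be open.
\begin{itemize}
\item The preimage of $\operatorname{End}^\infty(\N,\leq)$ is $\operatorname{End}^\infty\times\operatorname{End}^\infty$, because $J$ is a prime ideal: if $a,b$ both have infinite image, so does $ab$.
\item For $f\in J$, take $(a,b)$ with $ab=f$. If $a\in J$, then $a$ is isolated, and $\{a\}\times U$ works, where $U$ is the pointwise neighbourhood of $b$ fixing $b$ on the finite set $\im(a)$.
\item If $a\notin J$, then $b\in J$, since $\im(ab)=(\im a)b$ is finite only if $b$ is constant on a tail of the unbounded set $\im(a)$, hence $b$ has finite image. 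Then $b$ is isolated. Choose $k$ with $b$ constant on $[k,\infty)$. Let $U$ be the pointwise neighbourhood of $a$ fixing $a$ on $[0,m]$, where $m$ is least with $(m)a\ge k$, intersected with $\operatorname{End}^\infty$ (or with $J$ if $a\in J$). Monotonicity then gives $(U\times\{b\})\circ=\{ab\}$.
\end{itemize}
This mirrors cases (ii)–(iii) of \cref{*}.

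\emph{Maximality.} Let $\Tau$ be any Polish semigroup topology. By \cref{zariski_order} and \cite[Proposition 2.1]{main}, $\Tau$ contains the pointwise topology $\Tau_p$. By \cref{prop-borel-measurable}, $\Tau$ and $\Tau_p$ have the same Borel sets, and therefore so do $\Tau_p$ and $\Tau_{\max}$.

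Let $\mathcal S$ be the topology generated by $\Tau$ and $\Tau_{\max}$. It is a regular, second-countable semigroup topology. It is also Polish, as the join of two Polish topologies with common Borel sets, using \cref{PolishBuilding}-type arguments.

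It suffices to show that the identity $(\Endd,\Tau_{\max})\to(\Endd,\Tau)$ is continuous, and we check this separately on the two clopen pieces of $\Tau_{\max}$. On $J$, $\Tau_{\max}$ is discrete, so there is nothing to prove. On the clopen set $E:=\operatorname{End}^\infty(\N,\leq)$, the topology $\Tau_{\max}|_E$ is the pointwise topology, which by \cref{XXD1} has property $\mathbb{XX}$ with respect to $E$ itself. The restriction $\Tau|_E$ is a regular, second-countable semigroup topology on the subsemigroup $E$ whose Borel sets are contained in $\mathcal B(\Tau_p|_E)$. Hence \cref{cor:XX_sub}, applied to the semigroup $E$ with $\Tau_1=\Tau_p|_E$ and $\Tau_2=\Tau|_E$, gives $\Tau|_E\subseteq\Tau_p|_E$.

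Finally, let $U\in\Tau$. Then $U\cap E$ is open in $\Tau|_E$, hence open in $\Tau_{\max}|_E$, and $E$ is $\Tau_{\max}$-open, so $U\cap E\in\Tau_{\max}$. Also $U\cap J$ is $\Tau_{\max}$-open because $J$ is discrete and open in $\Tau_{\max}$. So $U\in\Tau_{\max}$.

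The delicate steps are checking that \cref{cor:XX_sub} applies to the restricted topology on $E$, which needs $E$ to be Polish (it is $G_\delta$, since $J$ is countable), and the case analysis showing multiplication is continuous at products landing in $J$.
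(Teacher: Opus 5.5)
Your proof is correct. It takes a somewhat different route from the paper in both halves.

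\textbf{Polishness of $\Tau_{\max}$.} The paper does not check this directly. It identifies $\Endd$ with the closed subsemigroup $S$ of $\Zendd$ from \cref{distict polish Z}, notes that $\Tau_{\max}$ corresponds to $\Tau_0|_S$, and reads everything off from \cref{*}. Your direct verification is self-contained: you add the countably many closed singletons of $J$ and then the now-closed set $E$ via \cref{PolishBuilding}, and you check the preimages of $E$ and of the singletons $\{f\}$, $f\in J$, under multiplication. This is essentially cases (ii)--(iii) of the proof of \cref{*} specialised to $\N$, without the detour through $\Z$.

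\textbf{Maximality.} The paper observes that $E$ is $G_\delta$ with respect to $\Tau$, so $\Tau|_E$ is a Polish semigroup topology on $\Enddi$. It then quotes \cref{maximum}, whose proof relies on the Zariski computation \cref{magenta} and on \cref{cor:XX_main}. You instead apply \cref{cor:XX_sub} directly on $E$ with $\Tau_1=\Tau_p|_E$ and $\Tau_2=\Tau|_E$. You get the Borel hypothesis by tracing $\mathcal B(\Tau)=\mathcal B(\Tau_p)$ onto $E$. This needs neither Polishness of $\Tau|_E$ nor \cref{magenta}, because the inclusion $\Tau_p\subseteq\Tau$ on the whole monoid (from \cref{zariski_order}) already supplies what is required. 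The paper's version is a one-liner only because \cref{maximum} has already been established.

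\textbf{Unused material.} The topology $\mathcal S$ and the Rees quotient map are never used and can be deleted. If you did want $\mathcal S$ to be Polish, the reason is that both $\Tau$ and $\Tau_{\max}$ contain the Hausdorff topology $\Tau_p$. Hence the diagonal is closed in $(\Endd,\Tau)\times(\Endd,\Tau_{\max})$, and $\mathcal S$ is homeomorphic to that closed subspace.
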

\begin{proof}
If we identify the subsemigroup $S$ of $\Zendd$ defined in \cref{distict polish Z}, then the topology \(\Tau_{\max}\) coincides with the topology $\Tau_0|_S$  (see \cref{infinite polish N}). Thus $\Tau_{\max}$ is a Polish semigroup topology.

Let \(\Tau\) be a Polish semigroup topology on \(\operatorname{End}(\N,\leq)\). Fix any \(U\in \Tau\). It is routine to see that \(U\backslash {\operatorname{End}^\infty(\N,\leq)} \in \Tau_{\max}\). It suffices to show that \(U\cap \operatorname{End}^\infty(\N,\leq) \in \Tau_{\max}\).
As \(\operatorname{End}^\infty(\N,\leq)\) is open with respect to \(\Tau_{\max}\), it is sufficient to show that the subspace topology on \(\operatorname{End}^\infty(\N,\leq)\) induced by \(\Tau\) is equal to the pointwise topology.

    Since the set \(\operatorname{End}^\infty(\N,\leq)\) is \(G_\delta\) in $\Endd$ with respect to \(\Tau\), we get that
    \(\Tau|_{\operatorname{End}^\infty(\N,\leq)}\) is a Polish semigroup topology.
    It follows from \cref{maximum} that \(\Tau|_{\operatorname{End}^\infty(\N,\leq)}\) is equal to the pointwise topology, as required.   
\end{proof}

\begin{theorem}\label{maxZ}
The finest Polish semigroup topology on $\Zendd$ is $\Tau_0$. 
\end{theorem}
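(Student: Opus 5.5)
By \cref{*}, $\Tau_0$ is a Polish semigroup topology on $\Zendd$. The real content is to show that every Polish semigroup topology $\Tau$ on $\Zendd$ satisfies $\Tau\subseteq\Tau_0$. I will mirror the proof of \cref{maxN}, with the role of $\operatorname{End}^\infty(\N,\leq)$ played by $E:=\operatorname{End}^\infty(\Z,\leq)$.

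First I describe $\Tau_0$. Since $D_0=\varnothing$, the topology $\Tau_0$ is generated by three families: the pointwise sets together with $C^\pm$ and $B^\pm_x\cap C^\mp$ as clopen sets, and all singletons of maps with finite image. Partition $\Zendd$ into the countable set $F$ of finite-image maps and the three sets
\[E=C^+\cap C^-,\qquad C^+\setminus C^-,\qquad C^-\setminus C^+.\]
Each of these four pieces is $\Tau_0$-clopen. On $E$, the restriction $\Tau_0|_E$ is the pointwise topology. On $C^+\setminus C^-$, every element has a minimum of its image, so $\Tau_0$ restricts to the pointwise topology refined by the sets $B^-_x$. In that subspace, the sets $\{f:\min\im f=x\}$ are clopen.

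Now let $\Tau$ be a Polish semigroup topology. By \cref{zariski_order}, $\Tau$ contains the pointwise topology, so every $\Tau_0$-open set is $\Tau$-Borel. Moreover $F$ is countable, and $C^\pm$ and $B^\pm_x$ are Borel in the pointwise topology. It therefore suffices to show that $U\cap P\in\Tau_0$ for each $U\in\Tau$ and each of the four pieces $P$.

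\textbf{The piece $F$.} This is trivial, since every point of $F$ is $\Tau_0$-isolated.

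\textbf{The piece $E$.} The set $E$ is $G_\delta$ in the pointwise topology, being the intersection of the conditions ``for each $y$, some value exceeds $y$ and some value is below $-y$''. Hence $E$ is $G_\delta$ in $\Tau$, so $\Tau|_E$ is a Polish semigroup topology on the monoid $E$ containing the pointwise topology. By \cref{XXD1} and \cref{cor:XX_main}, the pointwise topology on $E$ is a maximal Polish topology. Therefore $\Tau|_E$ equals the pointwise topology, which is $\Tau_0|_E$. As $E$ is $\Tau_0$-open, $U\cap E\in\Tau_0$.

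\textbf{The pieces $C^+\setminus C^-$ and $C^-\setminus C^+$.} These are the main obstacle, because they are not submonoids with property $\mathbb{XX}$ in the same way. By symmetry I treat only $C^+\setminus C^-$. It suffices to treat each clopen slice
\[M_x:=\{f\in C^+: \min\im f=x\}.\]
Each $M_x$ is $G_\delta$ in the pointwise topology, hence $\Tau|_{M_x}$ is Polish and contains the pointwise topology. I want to show that $\Tau|_{M_x}$ is coarser than the pointwise topology on $M_x$.

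The plan is to transport the problem to $\operatorname{End}^\infty(\N,\leq)$ via a continuous semigroup polynomial. The natural candidate is the map $g\mapsto \iota g$, where $\iota\in E$ is a surjection that is injective above some point. Elements of $M_x$ are determined by their ``tail'' behaviour, and $M_x$ is homeomorphic (pointwise) to an $\operatorname{End}^\infty(\N,\leq)$-like space. Concretely, I will use the maps $g\mapsto a g b$ with suitable $a,b\in\Zendd$. These maps are $\Tau$-continuous and, restricted to $M_x$, are pointwise-open by \cref{strong surjective}. I would then follow the argument of \cref{lem:bigXX}, applied with $\Tau_1$ the pointwise topology on the Polish space $M_x$ and $\Tau_2=\Tau|_{M_x}$. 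This requires the translation maps $t_{a,b}$ between points of $M_x$ to stay inside $M_x$. I expect \cref{term} to adapt to produce them: choose $\phi$ injective, and $\psi$ surjective onto $[x,\infty)$ and fixing the minimum behaviour, composed so that the result still has minimum $x$.

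If this adaptation works, \cref{lem:bigXX}(2), \cref{comeager} and \cref{propertyW} give $\Tau|_{M_x}\subseteq\Tau_0|_{M_x}$. Combining the pieces then yields $\Tau\subseteq\Tau_0$.
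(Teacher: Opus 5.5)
\paragraph{Overall.} Your treatment of the pieces $F$ and $E=\operatorname{End}^\infty(\Z,\leq)$ is the paper's. Your reduction of $C^+\setminus C^-$ to the $\Tau_0$-clopen slices $M_x$, on which $\Tau_0$ is just the pointwise topology, is also sound. The gap is that the step carrying all the weight for these slices is only conjectured (``if this adaptation works''), and the one justification you give does not go through as stated.

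\paragraph{The missing step.} Since $M_x$ is not a subsemigroup, applying \cref{lem:bigXX} with $A=S=M_x$ requires, for all $a,b\in M_x$, a map $t_{a,b}\colon g\mapsto \phi g\psi$ such that:
\begin{enumerate}[(i)]
\item $(M_x)t_{a,b}\subseteq M_x$;
\item $(a)t_{a,b}=b$;
\item $t_{a,b}{\restriction}_{M_x}\colon M_x\to M_x$ is open for the pointwise topology.
\end{enumerate}
\cref{strong surjective} does not give (iii) directly, for two reasons.
\begin{itemize}
\item It gives openness of $l_\phi$ and $r_\psi$ only on all of $\Zendd$, and the restriction of an open map to a subspace need not be open.
\item It applies only when $\psi$ is a genuine surjection of $\Z$. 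Your suggested $\psi$, ``surjective onto $[x,\infty)$'', is not.
\end{itemize}

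\paragraph{How to repair it.} Let $\alpha,\beta$ be the largest integers with $(\alpha)a=(\beta)b=x$.
\begin{itemize}
\item Take $\psi\in\Zendd$ surjective with $(x)\psi^{-1}=\{x\}$, for instance the identity on $(-\infty,x]$.
\item Take $\phi$ injective, sending $(-\infty,\beta]$ into $(-\infty,\alpha]$ and $(\beta,\infty)$ into $(\alpha,\infty)$, with the gap conditions of \cref{term} on $(\beta,\infty)$. Then $\psi$ can be built piecewise on $(x,\infty)$ so that $\phi a\psi=b$, which gives (ii).
\item For (i): $\phi$ is injective, so its image is unbounded below and $\min\im(\phi g)=x$ for $g\in M_x$. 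With $(x)\psi=x$ this gives $\phi g\psi\in M_x$.
\item For (iii): conversely, $\phi h\psi\in M_x$ forces $h\in M_x$, because $\psi$ preserves (un)boundedness and $(x)\psi^{-1}=\{x\}$. Hence $(W\cap M_x)t_{a,b}=(W)t_{a,b}\cap M_x$ for every pointwise-open $W$, and this set is open in $M_x$ by \cref{strong surjective}.
\end{itemize}
\cref{lem:bigXX}(2) then gives continuity of the identity at every point of $M_x$, so $\Tau|_{M_x}\subseteq\Tau_0|_{M_x}$ without needing \cref{propertyW}. The detour through $g\mapsto\iota g$ and $\operatorname{End}^\infty(\N,\leq)$ plays no role.

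\paragraph{Comparison with the paper.} The paper avoids this construction entirely. For $f\in C^+\setminus C^-$ it fixes both $y=\min\im(f)$ and the largest $x$ with $(x)f=y$, giving a $\Tau_0$-open neighbourhood $U$ of $f$. It then conjugates $U$ by the units $s_{x+1},s_{-y-1}$, which is a homeomorphism for every semigroup topology. This carries $U$ onto the closed subsemigroup $S\cong\Endd$ of \cref{distict polish Z}, where $\Tau_0|_S=\Tau_{\max}$, and the paper then cites \cref{maxN}. Your route, once completed, is self-contained but redoes an $\mathbb{XX}$-type argument on a non-semigroup. The paper's route reuses the $\N$ case via the group of units.
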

\begin{proof}
    \cref{*} implies that \(\Tau_0\) is a Polish semigroup topology.

    Consider the partition 
    \[\mathbb P:=\{\operatorname{End}^\infty(\Z,\leq), \Zendd\backslash (C^-\cup C^+), C^+\backslash C^-,C^-\backslash C^+\}\] of \(\operatorname{End}(\Z,\leq)\).
    Note that each element of \(\mathbb P\) is a clopen subsemigroup of \(\operatorname{End}(\Z,\leq)\) with respect to \(\Tau_0\). We only need to show that if \(\Tau\) is a Polish semigroup topology on \(\operatorname{End}(\Z,\leq)\), then \(\Tau|_B\subseteq \Tau_0|_B\) for all blocks \(B\) of the partition \(\mathbb P\). 

    Consider the first block \(B_1=\operatorname{End}^\infty(\Z,\leq)\). The topology \(\Tau\) contains the pointwise topology by \cref{zariski_order}. Since 
    \[B_1=\bigcap_{n\in \N}\makeset{f\in \Zendd}{there are \(a,b\in \Z\) with \((a)f\leq -n\leq n\leq (b)f\)},\]
    we get that $B_1$ is \(G_\delta\) with respect to the pointwise topology. It follows that the topology \(\Tau|_{B_1}\) is Polish. \cref{XXD1} and \cref{cor:XX_main} imply that \(\Tau|_{B_1}\) coincides with the pointwise topology, which in turn coincides with \(\Tau_0|_{B_1}\).

    Consider the second block \(B_2=\Zendd\backslash (C^-\cup C^+)\). The topology \(\Tau_0|_{B_2}\) is the discrete topology so it follows immediately that \(\Tau|_{B_2}\subseteq \Tau_0|_{B_2}\).

    Consider the third block \(B_3=C^+\backslash C^-\). Let \(f\in B_3\) be arbitrary, let \(y=\min(\im(f))\) and let \(x\) be the largest integer with \((x)f=y\).
    Let 
    \[U:=B_{y}^-\cap U_{x,y}\cap (\bigcup_{z>y} U_{x+1,z}).\]
    The set \(U\) is an open neighbourhood of \(f\) with respect to \(\Tau_0\) and consists of those \(g\in \Zendd\) with \(y=\min(\im(g))\) and \(x\) is the largest integer with \((x)g=y\). 
    Thus we only need to show that \(\Tau|_U\subseteq \Tau_0|_U\) as this implies that the identity map from \((B_3,\Tau_0)\) to \((B_3,\Tau)\) is continuous on an open neighbourhood of \(f\). Since $f$ is chosen arbitrarily, we would get that $\Tau|_{B_3}\subseteq \Tau_0|_{B_3}$. For all \(z\in \Z\), let \(s_z\) be the element of the group of units of \(\operatorname{End}(\Z,\leq)\) which adds \(z\) to each integer.
    Then 
    \[s_{x+1}Us_{-y-1}=\makeset{g\in \Zendd}{\(\min(\im(g))=-1\), \((-1)g=-1\) and \((0)g >-1\)}\]
    is the set \(S\) from \cref{distict polish Z}. 
    As shifting by elements of the group of units in a topological semigroup is a homeomorphism, it is sufficient to show that \(\Tau|_S\subseteq \Tau_0|_S\). As \((S,\Tau_0|_S)\) is topologically isomorphic to \((\Endd,\Tau_{\max})\), the containment  \(\Tau|_S\subseteq \Tau_0|_S\) follows from \cref{maxN}.

    Finally, consider the fourth block \(B_4=C^-\backslash C^+\). Let \(\iota\) be the involution \(z\mapsto -z\) of \(\Z\). 
    Note that \(\iota\in \operatorname{Sym}(\Z)\) has order 2 and so the map \(\phi\) defined by \((f)\phi = \iota f \iota\) is an automorphism of the semigroup \(\Z^\Z\), where the semigroup operation on \(\Z^\Z\) is composition of maps. As \(\iota\) is order reversing, \((\operatorname{End}(\Z))\phi=\operatorname{End}(\Z)\).
   It is straightforward to check that \(\phi\) maps each set from \cref{Def71}(a)--(e) to another set from \cref{Def71}(a)--(e). It follows from the definition of $\Tau_0$ that the map $\phi{\restriction}_{\Zendd}$ is a homeomorphism of $(\Zendd,\Tau_0)$.
    Moreover \(\phi\) swaps the blocks \(B_4=C^-\backslash C^+\) and \(B_3=C^+\backslash C^-\).  Note that \((\Tau)\phi:=\makeset{(U)\phi}{\(U\in \Tau\)}\) is a Polish semigroup topology on $\Zendd$. 
    By the argument for \(B_3\) using \((\Tau)\phi\) as \(\Tau\), we have \((\Tau)\phi|_{B_3}\subseteq \Tau_0|_{B_3}\).
    Since  
    \[\Tau|_{B_4}\subseteq \Tau_0|_{B_4}\iff (\Tau)\phi|_{B_3}\subseteq \Tau_0|_{B_3},\]
    it follows that $\Tau|_{B_4}\subseteq \Tau_0|_{B_4}$ as required.
\end{proof}

\section{Open questions}
Figure~\ref{table-the-only} shows us that there are semigroups which admit exactly 0, exactly 1, exactly \(\aleph_0\), or exactly \(2^{\aleph_0}\) Polish semigroup topologies.
The following proposition takes this one step further.
\begin{proposition}
There is a semigroup which admits exactly \(2^{2^{\aleph_0}}\) Polish semigroup topologies.
\end{proposition}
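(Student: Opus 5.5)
Take $S$ to be a \emph{zero semigroup} of cardinality $2^{\aleph_0}$: a set with a distinguished element $0$ and multiplication $xy=0$ for all $x,y\in S$. Since $S$ has cardinality continuum, it carries Polish topologies. Because every product is the constant $0$, multiplication $S\times S\to S$ is constant, hence continuous for \emph{every} topology on $S$. So the Polish semigroup topologies on $S$ are exactly the Polish topologies on the underlying set. It then suffices to count the Polish topologies on a set $X$ with $|X|=2^{\aleph_0}$.

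For the upper bound, a Polish topology is second-countable, so it is determined by a countable base. A countable base is a countable subset of $\mathcal P(X)$, and there are at most $|\mathcal P(X)|^{\aleph_0}=(2^{\mathfrak c})^{\aleph_0}=2^{\mathfrak c}$ of these. Hence there are at most $2^{2^{\aleph_0}}$ Polish topologies on $X$.

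For the lower bound, fix a Polish space $P$ of size continuum, say $P=\mathbb R$. Every bijection $\sigma:X\to P$ pulls back the topology of $P$ to a Polish topology $\tau_\sigma$ on $X$. Two bijections $\sigma,\sigma'$ give the same topology exactly when $\sigma'\sigma^{-1}$ is a self-homeomorphism of $P$. So the number of distinct topologies obtained is $|\operatorname{Sym}(X)|/|\operatorname{Homeo}(P)|$, in the sense that $|\operatorname{Sym}(X)|$ equals the number of topologies times $|\operatorname{Homeo}(P)|$.

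We have $|\operatorname{Sym}(X)|=2^{\mathfrak c}$. Every homeomorphism of $P$ is continuous and therefore determined by its values on a countable dense set, so $|\operatorname{Homeo}(P)|\le \mathfrak c^{\aleph_0}=\mathfrak c$. A set of cardinality $2^{\mathfrak c}$ partitioned into classes of size at most $\mathfrak c<2^{\mathfrak c}$ must have $2^{\mathfrak c}$ classes. This gives at least $2^{2^{\aleph_0}}$ distinct Polish topologies, and combined with the upper bound, exactly $2^{2^{\aleph_0}}$.

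The main point to verify is the orbit-counting step: each homeomorphism class of bijections has size exactly $|\operatorname{Homeo}(P)|$, and the cardinal arithmetic then forces $2^{\mathfrak c}$ classes. Beyond that step the argument is elementary.
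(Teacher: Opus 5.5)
Your proof is correct and follows essentially the same route as the paper. The paper uses the left zero semigroup on $\mathbb R$ (multiplication is a projection, so it is continuous for every topology) where you use a zero semigroup, and then counts exactly as you do: the upper bound comes from countable families of subsets, and the lower bound from transporting the standard topology along permutations modulo the $2^{\aleph_0}$ homeomorphisms of $\mathbb R$. Your countable-dense-set argument for $|\operatorname{Homeo}(P)|\le\mathfrak c$ justifies a bound that the paper only asserts.
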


\begin{proof}
Let $S$ be the set of reals $\mathbb R$ equipped with the left zero semigroup operation, that is, \((x,y)\mapsto x\). it is easy to check that every topology on $S$ is a semigroup topology.  Let us show that $S$ has precisely $2^{2^{\aleph_0}}$ Polish topologies. 
Let \(\Tau\) be the standard topology on \(\mathbb{R}\), and for all \(\sigma\in \Sym(\mathbb R)\) consider the topology
\[(\Tau)\sigma =\makeset{(U)\sigma}{\(U\in \Tau\)}.\]
This topology is Polish, as \(\sigma: (\mathbb R,\Tau)\rightarrow (\mathbb R,(\Tau)\sigma)\) is a homeomorphism.
Observe that for any $\sigma_1,\sigma_2\in\Sym(\mathbb R)$ we have
\[(\Tau)\sigma_1 =(\Tau)\sigma_2 \iff (\Tau)\sigma_1\sigma_2^{-1} =\Tau \iff \sigma_1\sigma_2^{-1} \text{ is a homeomorphism of }(\mathbb R,\Tau).\]
Since there are only $2^{\aleph_0}$ homeomorphisms of $(\mathbb R,\Tau)$ and $|\Sym(\mathbb R)|=2^{2^{\aleph_0}}$, it follows that there are at least \(2^{2^{\aleph_0}}\) Polish semigroup topologies topologies on $S$.

Observe that each Polish topology on $S$ is generated by a countable family of subsets of $S$. Since $S$ possesses $2^{2^{\aleph_0}}$ subsets, we have that there are at most $(2^{2^{\aleph_0}})^{\aleph_0}$ countable families of subsets of $S$. Observe that $$(2^{2^{\aleph_0}})^{\aleph_0}=2^{\max\{2^{\aleph_0}, \aleph_0\}}=2^{2^{\aleph_0}}.$$ Thus there are at most $2^{2^{\aleph_0}}$ countable families of subsets of $S$, implying that $S$ possesses at most \(2^{2^{\aleph_0}}\) distinct Polish topologies. 
\end{proof}
These observations lead us to the following question.
\begin{question}
Is it true that there is a semigroup compatible with precisely \(\kappa\) Polish semigroup topologies if and only if \(\kappa\in \{0, 1,\aleph_0,2^{\aleph_0},2^{2^{\aleph_0}}\}\)?
\end{question}

\cref{thm:endNl} suggests that the answer to the following question might be negative.

\begin{question}
    Does the semigroup $\operatorname{End}(\N,<)$ admit a finest Polish semigroup topology?
\end{question}

\cref{borelcontinuous-XX} suggests that the answer to the following question might be positive.

\begin{question}
    Do either of the Polish topological semigroups \((\Zendd,\Tau_0)\),  \((\operatorname{End}(\N,\leq),\Tau_{\max})\) have automatic continuity with respect to the class of second-countable topological semigroups? 
\end{question}

\begin{definition}
    Recall that the \(\mathscr{J}\)-preorder on a semigroup \(S\) is defined by \(s\leq_\mathscr{J} t\) if there is \(a,b\in S^1\) such that \(atb=s\). The equivalence relation on \(S\) defined by $\{(s,t)\in S{\times}S: s\leq_{\mathscr{J}}t \hbox{ and } t\leq_{\mathscr{J}}s\}$ is called the \(\mathscr{J}\) equivalence relation and the classes of this relation are called \(\mathscr{J}\)-classes.
\end{definition}

    If a semigroup \(S\) has property $\mathbb{XX}$ with respect a non-empty subset \(A\) of \(S\), then it is routine to verify that every element of \(S\) must be below every element of \(A\) in the \(\mathscr{J}\)-preorder. 
    Hence \(S\) must have a maximum \(\mathscr{J}\)-class and \(A\) must be contained within this class. 
\begin{question}
    Is there a topological semigroup \(S\) which has property $\mathbb{XX}$ with respect a non-empty subset \(A\) but not with respect to its top \(\mathscr{J}\)-class?
\end{question}

\end{document}